\numberwithin{equation}{section}
\newtheoremstyle{fancy1}{10pt}{10pt}{\itshape}{12pt}{\textsc\bgroup}{.\egroup}{8pt}{}
\newtheoremstyle{fancy2}{10pt}{10pt}{}{12pt}{\itshape}{.}{8pt}{ }
\theoremstyle{fancy1}
\newtheorem{cor}[equation]{Corollary}
\newtheorem{lem}[equation]{Lemma}
\newtheorem{prop}[equation]{Proposition}
\newtheorem{thm}[equation]{Theorem}
\newtheorem{main}{Theorem}
\newtheorem*{main*}{Theorem}
\newtheorem*{cor*}{Corollary}
\theoremstyle{fancy2}
\newtheorem{rem}[equation]{Remark}
\newtheorem*{rem*}{Remark}
\newcommand{\cref}[1]{Corollary~\ref{#1}}
\newcommand{\Sph}{\mathbb{S}}
\newcommand{\W}{\mathsf{W}}
\newcommand{\Q}{\mathsf{Q}}
\newcommand{\N}{\mathsf{N}}
\newcommand{\CP}{\mathbb{C\mkern1mu P}}
\newcommand{\C}{{\mathbb{C}}}
\newcommand{\Z}{{\mathbb{Z}}}
\renewcommand{\H}{\ensuremath{\operatorname{\mathsf{H}}}}
\newcommand{\E}{\ensuremath{\operatorname{\mathsf{E}}}}
\newcommand{\F}{\ensuremath{\operatorname{\mathsf{F}}}}
\newcommand{\G}{\ensuremath{\operatorname{\mathsf{G}}}}
\newcommand{\SO}{\ensuremath{\operatorname{\mathsf{SO}}}}
\renewcommand{\O}{\ensuremath{\operatorname{\mathsf{O}}}}
\newcommand{\Sp}{\ensuremath{\operatorname{\mathsf{Sp}}}}
\newcommand{\U}{\ensuremath{\operatorname{\mathsf{U}}}}
\newcommand{\SU}{\ensuremath{\operatorname{\mathsf{SU}}}}
\newcommand{\PSU}{\ensuremath{\operatorname{\mathsf{PSU}}}}
\newcommand{\Spin}{\ensuremath{\operatorname{\mathsf{Spin}}}}
\newcommand{\T}{\ensuremath{\operatorname{\mathsf{T}}}}
\renewcommand{\S}{\ensuremath{\operatorname{\mathsf{S}}}}
\newcommand{\M}{\ensuremath{\operatorname{\mathsf{M}}}}
\newcommand{\A}{\ensuremath{\operatorname{\mathsf{A}}}}
\newcommand{\CC}{\ensuremath{\operatorname{\mathsf{C}}}}
\newcommand{\g}{\ensuremath{\operatorname{\mathsf{g}}}}
\def\con#1=#2(#3){#1 \equiv #2 \bmod{#3}}
\newcommand{\no}{\noindent}
\newcommand{\K}{\mathsf{K}}
\renewcommand{\L}{\mathsf{L}}
\renewcommand{\F}{\mathsf{F}}
\begin{document}


\title{Rank three geometry and positive curvature}

\author{Fuquan Fang}
\address{Department of Mathematics\\
Capital Normal University\\
Beijing 100048
\\China}
\email{fuquan\_fang@yahoo.com }

\author{Karsten Grove}
\address{Department of Mathematics\\
University of Notre Dame\\
      Notre Dame, IN 46556\\USA
      }
\email{kgrove2@nd.edu}

\author{Gudlaugur Thorbergsson}
\address{Mathematisches Institut\\
Universit\"at zu K\"oln\\
Weyertal 86-90\\
50931 K\"oln\\ Germany
      }
\email{gthorber@mi.uni-koeln.de }

\thanks{The first author is supported in part by an NSFC grant and he is grateful to the University of Notre Dame for its hospitality.
The second author is supported in part by an NSF grant, a research chair at the Hausdorff Center at the University of Bonn and by a Humboldt research award. The third author is grateful to the University
of Notre Dame and the Capital Normal University in Beijing for their
hospitality. }

\begin{abstract}
An axiomatic characterization of buildings of type $\CC_3$ due to Tits is used to prove that any cohomogeneity two polar action of type $\CC_3$ on a positively curved simply connected manifold is equivariantly diffeomorphic to a polar action on a rank one symmetric space. This includes two actions on the Cayley plane whose associated $\CC_3$ type geometry is not covered by a building.
\end{abstract}

\maketitle

 The rank (or size) of a Coxeter matrix $\M$ coincides with the number of generators of its associated Coxeter system. The basic objects in Tits' local approach to buildings \cite{Ti2} are the so-called chamber systems $\mathscr{C}$ of type $\M$ (see also \cite{Ro}). Indeed, if any so-called (spherical) residue (subchamber system) of $\mathscr{C}$ of rank 3 is covered by a building, so is $\mathscr{C}$.
 
 Recall that a polar $\G$ action on a Riemannian manifold $M$ is an isometric action with a so-called section $\Sigma$, i.e., an immersed submanifold of $M$ that meets all $\G$ orbits orthogonally. Since the action by the identity component of $\G$ is polar as well, we assume throughout without stating it that $\G$ is connected.
 
 It is a key observation of \cite{FGT} that the study of polar $\G$ actions on 1-connected positively curved manifolds $M$ in essence is the study of a certain class of (connected) chamber systems $\mathscr{C}(M;\G)$. Moreover, when the universal (Tits) cover of $\mathscr{C}(M;\G)$ is a building it has the structure of a compact spherical building in the sense of Burns and Spatzier \cite{BSp}. This was utilized in \cite{FGT} to show:
 
 \begin{main}
 Any polar $\G$ action of cohomogeneity at least two on a simply connected closed positively curved manifold $M$ is equivariantly diffeomorphic to a polar $\G$ action on a rank one symmetric space if the associated chamber system $\mathscr{C}(M;\G)$ is not of type $\CC_3$.
  \end{main}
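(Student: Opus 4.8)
The plan is to run everything through the dictionary of \cite{FGT}: studying the $\G$-action on $M$ is the same as studying the connected chamber system $\mathscr{C}:=\mathscr{C}(M;\G)$, which is thick, residually connected, and of type $\M$ --- the Coxeter matrix attached to the action, of rank one more than the cohomogeneity and hence of rank at least $3$. Since $M$ is closed, a section $\Sigma$ of the action is a closed totally geodesic, hence positively curved, submanifold on which the generalized Weyl group acts as a finite reflection group ($\Sigma$ being a round sphere by \cite{FGT}); so $\M$ is a \emph{spherical} Coxeter matrix and every residue of $\mathscr{C}$ is spherical. The first step is to reduce the theorem to the single assertion $(\star)$: that $\mathscr{C}$ is covered by a building.

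Granting $(\star)$, the universal Tits cover $\tilde{\mathscr{C}}$ is a building, which by \cite{FGT} is a compact spherical building in the sense of Burns--Spatzier \cite{BSp}; the classification of such buildings --- Tits' classification of spherical buildings of rank $\ge 3$, together with that of compact connected Moufang polygons in rank $2$ --- then determines $\tilde{\mathscr{C}}$. It remains to match the resulting buildings against the polar actions on the rank one symmetric spaces $\Sph^n$, $\CP^n$, $\HP^n$, $\CaP$ (the positively curved symmetric spaces) and to check that such a building, together with the ramification data --- the codimensions of the singular strata, which positive curvature constrains to the values occurring there --- determines $(M,\G)$ up to equivariant diffeomorphism. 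I expect this matching-and-rigidity step to be comparatively routine, the rigidity being largely forced once the compact-building structure is in place; the crux is $(\star)$.

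For $(\star)$ I would invoke Tits' local criterion recalled above (\cite{Ti2}; see also \cite{Ro}): it suffices that every rank $3$ residue of $\mathscr{C}$ be covered by a building. The type of such a residue is a rank $3$ subdiagram of the spherical matrix $\M$, hence $\A_3$, $\CC_3$, $\H_3$, or reducible. A reducible residue is the direct product of its connected factors, each a rank $1$ residue or a thick generalized $m_{ij}$-gon with $m_{ij}\in\{2,3,4,6\}$ (Feit--Higman, together with the non-existence of compact thick octagons), hence a building; so the residue itself is a building. Type $\H_3$ cannot occur, since one of its rank $2$ residues would be a thick generalized pentagon. A residue of type $\A_3$ is a thick, connected, residually connected chamber system of type $\A_3$ and therefore covered by a building --- the classical fact that an abstract $3$-dimensional projective geometry is covered by the flag complex of a projective space. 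Everything thus comes down to the residues of type $\CC_3$.

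Here the rank of $\mathscr{C}$ is decisive. If $\mathscr{C}$ has rank $3$, its only rank $3$ residue is $\mathscr{C}$ itself, and the case that remains to be excluded is that $\mathscr{C}$ is of type $\CC_3$ --- precisely the hypothesis of the theorem, and the subject of the present paper, where the exotic Cayley plane geometries (which are not covered by any building) actually occur. If $\mathscr{C}$ has rank $\ge 4$, a residue of type $\CC_3$ occurs only when $\M$ has type $\CC_n$ with $n\ge 4$ or type $\F_4$, and the missing link is the statement that a thick $\CC_3$ residue of a chamber system of one of these types is covered by a building. This is the main obstacle: thick $\CC_3$ geometries not covered by any building do exist --- the flat ones and the Cayley plane ones --- so a $\CC_3$ residue is not automatically (covered by) a building in isolation, and one must exploit a rigidity peculiar to $\CC_3$ sitting inside higher rank. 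I would try to establish it either through Tits' analysis of $\CC_3$ residues in chamber systems of type $\CC_n$ and $\F_4$, or --- in the spirit of \cite{FGT} --- by observing that such a residue is the chamber system of a cohomogeneity two polar action on a positively curved manifold which moreover carries the incidence structure inherited from its $\CC_n$ (respectively $\F_4$) neighbourhood, and arguing that this extra structure is incompatible, on curvature grounds, with the exotic $\CC_3$ geometries. Once $(\star)$ is in hand, the argument closes through the reduction of the first two paragraphs.
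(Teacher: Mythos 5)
Your outline reproduces the FGT strategy in its broad strokes: trade the polar $\G$-manifold $(M,\G)$ for the chamber system $\mathscr{C}=\mathscr{C}(M;\G)$ of spherical type $\M$; aim to show that its universal Tits cover is a building, hence (by \cite{FGT} via \cite{BSp}) a compact spherical building; and then identify $(M,\G)$ via the classification of such buildings together with the reconstruction machinery of \cite{GZ}. You also invoke the right tool for producing the building --- Tits' local criterion from \cite{Ti2} --- and you correctly observe that residues of reducible type, of type $\A_3$, and (vacuously) of type $\H_3$ cause no trouble, so that everything hinges on the rank $3$ residues of type $\CC_3$. All of this is consistent with what the present paper reports about the proof in \cite{FGT}, to which Theorem~A is actually delegated; the present paper itself supplies no independent proof of Theorem~A, only of Theorem~B.

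The genuine gap is exactly the one you flag and then leave open. When $\mathscr{C}$ has rank $\ge 4$ and type $\CC_n$ ($n\ge 4$) or $\F_4$, the hypothesis of Theorem~A is satisfied, yet $\mathscr{C}$ still has rank $3$ residues of type $\CC_3$, and Tits' local criterion requires that \emph{those} be covered by buildings. This is not automatic: free $\CC_3$ geometries and the homogeneous Cayley-plane geometries show that a thick $\CC_3$ geometry need not be covered by any building, and the property is not inherited for free from sitting inside a higher-rank chamber system. Writing ``I would try to establish it either through Tits' analysis \dots\ or by observing that such a residue is the chamber system of a cohomogeneity two polar action \dots\ and arguing that this extra structure is incompatible, on curvature grounds, with the exotic $\CC_3$ geometries'' names a plausible direction but does not close it. Two concrete difficulties would have to be overcome: (i) the fixed-point component realizing the $\CC_3$ residue is totally geodesic and positively curved but need not be simply connected, so one cannot simply cite a classification of simply connected $\CC_3$ polar manifolds --- this is precisely why the present paper works so hard with $\S^1$- and $\S^3$-covers; and (ii) even granting a classification of the residue, one must rule out the exceptional Cayley-plane residues, e.g.\ by exploiting the surrounding $\CC_n$ or $\F_4$ incidence structure, and that argument is not supplied. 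In addition, the ``matching-and-rigidity'' step you call comparatively routine in fact relies on the full Burns--Spatzier machinery, Tits' classification of spherical buildings of rank $\ge 3$, the classification of compact connected Moufang polygons (to handle rank $2$ residues), and the reconstruction theorem; these are available, but they are not free, and the Moufang property of the rank $2$ residues is itself something that has to be verified rather than assumed. As it stands, the proposal is a correct strategic outline with the central lemma unproven, not a proof.
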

 
 We note here, that when the action has no fixed points, the rank of $\mathscr{C}(M;\G)$ is $\dim(M/\G) +1$,
 i.e., \emph{one more} than the \emph{cohomogeneity} of the action. In the above theorem the Cayley plane emerges only in cohomogeneity two and when $\G$ has fixed points. Moreover, there are indeed chamber systems with type $\M = \CC_3$ whose universal cover is NOT a building (see, e.g., \cite{Ne}, \cite{FGT}, \cite{Ly}, \cite{KL}
 and below). In our case, a polar $\G$ action on $M$ is of type $\CC_3$ if and only if its 
 orbit space $M/\G$ is a geodesic 2-simplex with angles $\pi/2, \pi/3$ and $\pi/4$.
 
Our aim here is to take care of this exceptional case and prove

\begin{main}
Any polar $\G$ action on a simply connected positively curved manifold $M$ of type $\CC_3$ is equivariantly diffeomorphic to a polar action on a rank one symmetric space. This includes two actions on the Cayley plane where the universal covers of the associated chamber systems are not buildings.
\end{main}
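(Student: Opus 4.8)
The strategy is to combine the chamber-system dictionary of \cite{FGT} with Tits' axiomatic (local) characterization of buildings of type $\CC_3$ \cite{Ti2}, and then to treat separately the two mutually exclusive cases it produces. I would begin by recording the structure we start from: a polar $\G$ action of type $\CC_3$ has cohomogeneity two, orbit space $M/\G$ the spherical Coxeter triangle with angles $\pi/2,\pi/3,\pi/4$, and associated chamber system $\mathscr{C}=\mathscr{C}(M;\G)$ a connected chamber system of type $\CC_3$ whose three families of rank-two residues are a projective plane ($I_2(3)$), a generalized quadrangle ($I_2(4)$), and a generalized digon ($I_2(2)$). Since $M$ is closed and positively curved, these residues are compact connected topological generalized polygons endowed with a large symmetry group inherited from $\G$, hence classical (Moufang), coordinatized by one of $\R$, $\C$, $\QH$, $\mathbb{O}$; and the universal Tits cover $\widetilde{\mathscr{C}}$, when it is a building, is compact spherical in the sense of \cite{BSp}. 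With this input Tits' theorem gives the dichotomy: either $\widetilde{\mathscr{C}}$ is a building, or $\mathscr{C}$ is one of the explicitly described exceptional $\CC_3$ geometries --- the analogues, in the present smooth compact setting, of the finite geometries in \cite{Ne}, \cite{Ly}, \cite{KL}.

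\emph{The building case.} Here $\widetilde{\mathscr{C}}$ is a compact connected spherical building of type $\CC_3$; being of rank three it is Moufang, hence it appears in Tits' classification with classical coordinatizing datum built from $\R$, $\C$, $\QH$ or $\mathbb{O}$. I would then run the reconstruction argument of \cite{FGT} --- the step carried out there for every spherical type other than $\CC_3$ --- now for $\CC_3$: the building prescribes the isotropy subgroups along all faces of the simplex $M/\G$, and positive curvature together with the smooth classification of the disk bundles over the singular orbits singles out the assembly that exhibits $M$, $\G$-equivariantly, as a positively curved rank one symmetric space with a polar action.

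\emph{The exceptional case.} Here no building is available and this is where the real work lies. First I would classify the exceptional compact $\CC_3$ geometries permitted by Tits' theorem: comparing them with the classification of their admissible residues --- octonionic projective planes as plane residues and the exceptional Moufang quadrangle of type $\mathsf{F}_4$ as point residues --- one finds exactly two of them, mirroring the two $\mathrm{Alt}_7$ geometries of the finite theory. Next I would exhibit two cohomogeneity-two polar actions on the Cayley plane $\CaP$ whose chamber systems are precisely these two geometries, and check directly that their universal Tits covers are not buildings, so that the conclusion of the theorem holds for them. Finally, to see that these account for the whole exceptional case, I would adapt the reconstruction of \cite{FGT} to the non-building situation: the combinatorics of the exceptional geometry still determines all singular isotropy subgroups and the gluing over $M/\G$, forcing $M$ to be $2$-connected with the cohomology ring of $\CaP$ and, via the smooth recognition of the Cayley plane, to be $\G$-equivariantly diffeomorphic to one of the two models.

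The principal obstacle is the exceptional case. In the absence of a group carrying a Tits system and acting on a building, there is no model from which to transport structure, so the classification of the two exceptional geometries, the construction of their realizations on $\CaP$, and --- most seriously --- the reconstruction of $M$ directly from the exceptional combinatorics (together with positive curvature and smoothness) must all be done by hand. A secondary technical point, needed already to enter the dichotomy, is to establish that positive curvature forces every rank-two residue of $\mathscr{C}(M;\G)$ to be a classical generalized polygon.
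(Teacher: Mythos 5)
Your proposal shares the paper's starting point --- Tits' axiomatic criterion for $\CC_3$ geometries to be buildings --- but diverges from its actual method almost immediately, and the divergence hides several genuine gaps.

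\textbf{The dichotomy you invoke does not exist in usable form.} You propose to first classify the exceptional compact $\CC_3$ geometries permitted by Tits' theorem and then match them to Cayley plane actions. But Tits' Proposition~9 is a local criterion (the (LL) axiom), not a classification of what happens when the axiom fails; there is no off-the-shelf list of exceptional compact $\CC_3$ Tits geometries to compare against. The paper never produces such a list. Instead it goes in the \emph{opposite} direction: it classifies the possible polar data (isotropy groups and slice representations along a chamber) directly, verifies the (LL) axiom case by case, and the two exceptional Cayley plane actions emerge as the two cases where the verification mechanism breaks down --- when the reduction $M^{\K'}$ in the $(2,2,3)$ case happens to be $\HP^2$, and when $\mathrm{rk}(\G)=3$ in the $(1,1,5)$ case. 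At that point the authors do not identify an abstract exceptional geometry; they simply determine $\G$ and all its isotropy data by Lie-theoretic arguments and invoke the recognition theorem to conclude $M\cong\Bbb{OP}^2$.

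\textbf{The central machinery of the proof is absent from your sketch.} What actually makes the (LL) axiom checkable is the reduction $M^{\K}$ (or $M^{\K'}$) to a cohomogeneity-one submanifold: Lemmas~\ref{Criterion1} and \ref{Criterion2} reduce the $\CC_3$ building property to showing that $\mathscr{C}(M^{\K})$ is a $\CC_2$ building, which in turn is fed by the classification of positively curved cohomogeneity-one manifolds (Lemma~\ref{GWZ}). On top of that, when $M^{\K}$ is $\CP^n$ or a lens space rather than a sphere, the paper does not conclude directly for $\mathscr{C}(M;\G)$ but constructs a principal $\S^1$ (or $\S^3$) chamber-system cover via the bundle construction of [GZ] and proves \emph{that} is a building (Remark~\ref{lift crit}). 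None of this appears in your proposal, yet it is indispensable --- the building case is not a matter of ``running the reconstruction argument of [FGT]'' but of this reduction-and-cover strategy, carried out separately across the Grassmann flip, Grassmann series, minimal Grassmann, and exceptional $\G_q$-slice families.

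\textbf{Your description of the exceptional residues is incorrect.} You say the exceptional geometries have octonionic projective planes as plane residues and the $\mathsf{F}_4$ Moufang quadrangle as point residues. The two exceptional actions have multiplicity triples $(2,2,3)$ and $(1,1,5)$, so the plane residues (type-$\A_2$ residues at $t$) are the complex projective plane ($\bar\G_t=\PSU(3)$) and the real projective plane ($\bar\G_t=\SO(3)$) respectively; neither is octonionic. The $\CC_2$ residues at $q$ are also not of exceptional $\mathsf{F}_4$ quadrangle type. So even if one wanted to pursue your strategy of matching against a presumed classification of exceptional geometries, the matching criteria you propose would reject the actual answers.

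In short: the proposal names the right axiom and the right two exceptional actions, but the route you sketch --- dichotomy, classify exceptional geometries, reconstruct --- is not available, and the route the paper actually takes (case analysis of polar data, reduction to cohomogeneity one, $\S^1$-covers, recognition theorem) is nowhere in your outline.
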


Combining these results of course establishes, the

\begin{cor*}
 Any polar $\G$ action of cohomogeneity at least two on a simply connected closed positively curved manifold $M$ is equivariantly diffeomorphic to a polar $\G$ action on a rank one symmetric space.
\end{cor*}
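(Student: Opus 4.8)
\emph{Proof of the Corollary.} This is immediate from the two main theorems. Let a polar $\G$ action of cohomogeneity at least two on a simply connected closed positively curved manifold $M$ be given. By the remark following Theorem~A, since the action has cohomogeneity at least two, the orbit space $M/\G$ has dimension at least two, and we distinguish two cases according to the type of the associated chamber system $\mathscr{C}(M;\G)$. If $\mathscr{C}(M;\G)$ is not of type $\CC_3$, then Theorem~A applies directly and yields that the action is equivariantly diffeomorphic to a polar $\G$ action on a rank one symmetric space. If, on the other hand, $\mathscr{C}(M;\G)$ is of type $\CC_3$ — equivalently, by the characterization recalled above, the orbit space $M/\G$ is a geodesic $2$-simplex with angles $\pi/2$, $\pi/3$ and $\pi/4$ — then Theorem~B applies and gives the same conclusion. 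In either case the action is equivariantly diffeomorphic to a polar action on a rank one symmetric space, which is the assertion of the Corollary. $\blacksquare$

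\medskip

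\noindent\emph{Remark on the logical structure.} The point of isolating Theorem~B is precisely that the hypothesis of Theorem~A excludes type $\CC_3$, because in that case the universal Tits cover of $\mathscr{C}(M;\G)$ need not be a building, so the compact spherical building machinery of Burns and Spatzier used in \cite{FGT} is not available and a separate argument — carried out in the body of the present paper via Tits' axiomatic characterization of $\CC_3$ buildings — is required. Once Theorem~B is in hand, no further work is needed for the Corollary: the two theorems together cover all possible types of $\mathscr{C}(M;\G)$ in cohomogeneity at least two, and their conclusions coincide. The only subtlety worth flagging is that the case division is genuinely exhaustive — every cohomogeneity-at-least-two polar action on a $1$-connected positively curved manifold either has $\mathscr{C}(M;\G)$ of type $\CC_3$ or it does not — so there is no gap between the hypotheses of the two theorems.
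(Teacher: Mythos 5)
Your proof is correct and matches the paper exactly: the authors dispose of the Corollary with the single line ``Combining these results of course establishes'' the claim, i.e., the same dichotomy on whether $\mathscr{C}(M;\G)$ is of type $\CC_3$ (Theorem~B) or not (Theorem~A). Your added remark on exhaustiveness of the case split is a harmless elaboration of what the paper leaves implicit.
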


This is in stark contrast to the case of cohomogeneity one, where in dimensions seven and thirteen there are infinitely many non-homogeneous manifolds (even up to homotopy). The classification work in \cite{GWZ} also lead to the discovery and construction of a new example of a positively curved manifold (see \cite{De} and \cite{GVZ}).

By necessity, as indicated above, the proof of Theorem B is entirely different from the proof of Theorem A. In general, the geometric realization of our chamber systems $\mathscr{C}(M;\G)$ utilized in the proof of Theorem A are not simplicial. However, in \cite{FGT} it was proved that in fact

\begin{main} \label{simplicial}
The geometric realization  $|\mathscr{C}(M,\G)|$ of a chamber system
$\mathscr{C}(M,\G)$ of type $\A_3$ or $\CC_3$ associated with a
simply connected polar $\G$-manifold $M$ is simplicial.
\end{main}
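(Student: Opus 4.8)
The plan is to exploit that, by construction, the chambers of the (connected) chamber system $\mathscr{C}(M,\G)$ are copies of the model chamber $\Delta:=M/\G$---a geodesic $2$-simplex whose three edges carry the types $I=\{0,1,2\}$---glued along panels according to the adjacency relations $\sim_i$, $i\in I$; since these chambers form a manifold, $|\mathscr{C}(M,\G)|$ is a priori only a topological space assembled from simplices, and the claim is that this space is an honest simplicial complex. First I would recall the criterion from Tits' local theory (\cite{Ti2}, \cite{Ro}): the geometric realization, built by gluing copies of $\Delta$ along panels, coincides with the order complex of the poset of residues of $\mathscr{C}(M,\G)$ ordered by reverse inclusion---and hence is a simplicial complex---as soon as $\mathscr{C}(M,\G)$ is \emph{residually connected}, that is, every residue is connected and any family of residues with pairwise non-empty intersection has a common chamber, the intersection itself being a residue of the expected type. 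So the task reduces to verifying residual connectedness, which splits into a global part and a local part.

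For the global part, let $\pi\colon M\to\Delta$ be the orbit map; the faces of $\Delta$ are indexed by subsets of $I$ (the open $2$-cell by $\emptyset$, the edges $e_i$ by singletons, the vertices by $2$-element subsets), and to a face $F$ there corresponds the submanifold $\pi^{-1}(F)\subseteq M$ together with its isotropy group $\H_F$ and slice representation. The key point is that every $\pi^{-1}(F)$, and every $\pi^{-1}(\overline{F})$, is \emph{connected}, because $F$ is connected and the fibres $\G/\H_F$ are connected since $\G$ is connected. I would then identify the residues of $\mathscr{C}(M,\G)$ of type $J\subseteq I$ with the pieces of $M$ lying over the face $F_J$, so that they inherit connectedness and so that two residues of types $J$ and $J'$ meet exactly when $F_J\cap F_{J'}$ is a non-empty face $F_{J\cup J'}$ of $\Delta$---which is precisely the combinatorics of the face poset of a simplex. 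This yields residual connectedness once one knows that along each fixed face the gluing is free of fusion, which is the content of the local part.

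The local part---and, I expect, the main obstacle---is to show that around a vertex $v$ of $\Delta$ the induced rank $2$ residue of $\mathscr{C}(M,\G)$ is the chamber system of a (thin or thick) generalized $m$-gon, rather than a degenerate rank $2$ system whose realization would fail to be simplicial; here $m=m_{ij}\in\{2,3,4\}$ is the relevant entry of the Coxeter matrix $\M$ (the three off-diagonal entries being $2,3,3$ in type $\A_3$ and $2,3,4$ in type $\CC_3$). This residue is governed by the slice representation of $\H_v$ on the normal space $\nu_v M$, which is polar of cohomogeneity two and hence has dihedral generalized Weyl group of order $2m$ acting on a $2$-plane section. The point to extract is that the two singular strata of this slice action meet only along the fixed subspace---mirroring the fact that the two edges of $\Delta$ at $v$ are distinct faces---so that the two panel directions through the chamber at $v$ are genuinely distinct and transverse; consequently the rank $2$ residue is bipartite, with disjoint point set and line set, and no two distinct chambers are simultaneously $i$- and $j$-adjacent. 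This is the step that genuinely uses polarity together with the structure of low-cohomogeneity polar representations, and it has to be carried out \emph{without} assuming the residue comes from a building, since for type $\CC_3$ the universal cover of $\mathscr{C}(M,\G)$ need not be one. Granting it, the link of every vertex of $|\mathscr{C}(M,\G)|$ is the incidence graph of the corresponding generalized $m$-gon, hence a simplicial complex, so around each chamber the realization is a genuine simplicial cone; combined with the global part, this exhibits $|\mathscr{C}(M,\G)|$ as the order complex of its residue poset and therefore as a simplicial complex.
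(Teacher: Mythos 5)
Your framework --- reducing simpliciality of $|\mathscr{C}(M,\G)|$ to residual connectedness of the chamber system, then verifying this via a global and a local part --- is a legitimate route, and identifying the rank-$2$ residues with the chamber systems of the slice representations at the vertices is the right object to analyze. The difficulty is that what you actually argue in the local part is only the single condition $\sim_i\cap\sim_j=\mathrm{id}$ (no two distinct chambers are simultaneously $i$- and $j$-adjacent). Residual connectedness in rank $3$ also requires, for every chamber $c$, that $R_{\{i,j\}}(c)\cap R_{\{i,k\}}(c)=R_{\{i\}}(c)$ and $R_{\{i\}}(c)\cap R_{\{j,k\}}(c)=\{c\}$ --- that an edge of $|\mathscr{C}|$ is determined by its two endpoints and a $2$-simplex by its three vertices. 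These do not follow from the rank-$2$ residues being generalized $m$-gons, and your global part does not supply them either: identifying a $J$-residue with the set of \emph{all} chambers containing its common face is precisely the point at stake, and it is not a consequence of the bare connectedness of the strata $\pi^{-1}(F)$.

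The input that is missing, and that makes the argument actually close, is geometric rather than incidence-theoretic. A chamber $C$ in a section $\Sigma$ is a convex geodesic $2$-simplex, so geodesics between its vertices are unique within $C$; an isometry $g\in\G$ fixing two vertices of $C$ therefore fixes the edge joining them, and an isometry fixing both edges through a vertex $v$ fixes $T_v\Sigma$ pointwise and hence, $\Sigma$ being totally geodesic, all of $C$, so lies in the principal isotropy $\H$. This yields $\G_{\ell_i}\cap\G_{\ell_j}=\H$, $\G_i\cap\G_j=\G_{\ell_k}$ and $\G_i\cap\G_{\ell_i}=\H$ all at once, which is the full set of intersection conditions. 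Your proposal gestures at the relevant structures (the slice representation, the dihedral Weyl group, transversality of singular strata) but never extracts this convexity/uniqueness input that actually forces the identifications. You also never invoke simple connectedness of $M$, which is needed (compare Lemma~\ref{oequiv}) to rule out exceptional orbits and to guarantee that the isotropy pattern along $C$ has the clean form your identification of residues with strata presupposes.
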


When the geometric realization of a chamber system of type $\M$ is simplicial it is called a \emph{Tits geometry} of type $\M$. This allows us to use an axiomatic characterization of $\CC_3$ geometries that are buildings 
 (see \cite{Ti2}, Proposition 9).
 So rather than considering the universal cover $\tilde{\mathscr{C}}(M;\G)$ directly, we construct in all but two cases a suitable cover of ${\mathscr{C}}(M;\G)$ (possibly ${\mathscr{C}}(M;\G)$ itself), and prove that it satisfies the $\CC_3$ building axiom of Tits. The two cases where this methods fails, are then recognized as being equivalent to two $\CC_3$ type polar actions on the Cayley plane  $\Bbb{OP}^2$ (cf. \cite{PTh, GK}).

We note, that since all our chamber systems $\mathscr{C}(M,\G)$ are homogeneous and those of type $\CC_3$ (and  $\A_3$) are Tits geometries an independent alternate proof of Theorem B follows from \cite{KL}.

\section{Preliminaries}\label{prelim}

The purpose of this section is threefold. While explaining the overall approaches to the strategies needed in the proof of Theorem B, we recall the basic concepts and establish notation.

\smallskip

Throughout $\G$ denotes a compact connected Lie group acting on a closed 1- connected positively curved manifold $M$ in a polar fashion and of type $\CC_3$. 

Fix a \emph{chamber} $C$ in a section $\Sigma$ for the action. Then $C$ is isometric to the orbit spaces $M/\G$ and $\Sigma/\W$, where $\W$ is the reflection group of $\Sigma$ and $\W$ acts simply transitively on the chambers of $\Sigma$. Since the action is of type $\CC_3$, $C$  is a convex positively curved 2-simplex with geodesic sides = faces,   $\ell_r, \ell_t$ and $\ell_q$ opposite its vertices $r, t$ and $q$ with angles $\pi/2, \pi/3$ and $\pi/4$ respectively. 

\smallskip

By the \emph{Reconstruction Theorem} of \cite{GZ} recall that any polar $\G$ manifold $M$ is completely determined by its so-called \emph{polar data}.
In our case, this data consist of $\G$ and all its isotropy groups,  \emph{together with their inclusions} along a chamber $C$ (cf. also Lemma 1.5 in \cite{Go}). We denote the principal isotropy group by $\H$, and the isotropy groups at vertices and opposite faces by  $\G_r, \G_t, \G_q$ and $\G_{\ell_r}, \G_{\ell_t}, \G_{\ell_q}$ respectively. What remains after removing
 $\G$ from this data will be referred to as the \emph{local data} for the action. 
 
 \smallskip
 
 With two exceptions, it turns out that only partial data are needed to show that the action indeed is equivalent to a polar action on a rank one symmetric space. Since the data in the two exceptional cases coincide with those of the exceptional $\CC_3$ actions on the Cayley plane, this will then complete the proof of Theorem A. In addition, it is worth noting, that since the groups $\G$ derived from those data (in \ref{lemC3d=2,3b} and \ref{(1,1,5)}) are maximal connected subgroups of $\F_4$, the identity component of the isometry group of the Cayley plane $\Bbb{OP}^2$, their actions are uniquely determined and turn out to be polar.
 
  \smallskip
 
 The proof of Theorem A in all but the two exceptional cases is based on showing that the universal cover, $\tilde{\mathscr{C}}$ of the \emph{chamber system} $\mathscr{C} = \mathscr{C}(M, \G)$  associated to the polar action is a spherical Tits building \cite{FGT}. Here, 
 the homogeneous chamber system  $\mathscr{C}(M, \G)$  is the union $\cup_{\g \in \G} \g C$ of all chambers with three \emph{adjacency relations} one for each face: Specifically $\g_1 C$ and $\g_2 C$ are $i$ adjacent if their respective $i$ faces are the same in $M$. This chamber system with the thin topology, i.e., induced from the its path metric is a simplicial complex by Theorem C, and hence $\mathscr{C}(M, \G)$ is a so-called $\CC_3$ geometry.

As indicated, the Fundamental Theorem of Tits used in \cite{FGT} to show that $\tilde{\mathscr{C}}$ is a building yields nothing for rank three chamber systems as well as rank three geometries. Instead we will show that $\mathscr{C}$, or a cover we construct of $\mathscr{C}$ is a $\CC_3$ building (and hence simply connected) by verifying an \emph{axiomatic incidence characterization} (see section \ref{axiom}) of such buildings due also to Tits.

\smallskip

The \emph{construction of chamber system covers} we utilize is equivalent in our context to the \emph{principal bundle construction} of \cite{GZ} (Theorem 4.5)  for Coxter polar actions and manifolds. Specifically for our case: 

\begin{itemize}
\item
Given the data, $\H, \G_{\ell_i}, \G_j,  i, j \in \{t,r,q\}$ and $\G$
for $(M,\G)$, the data for $(P,\L \times \G)$ consists of graphs  $\hat \H,  \hat
\G_{\ell _i},  \hat \G_j$ in $\hat\G:=\L \times \G$ of compatible homomorphisms
from $\H, \G_{\ell_i}, \G_j,  i, j \in {t,r,q}$ to $\L$. In particular, the local data for $(P,\L \times \G)$ are isomorphic to the local data for $(M, \G)$.

\item
Clearly $\L$ acts freely as a group of automorphisms, and
$\mathscr{C}(P,\L \times \G)/ \L = \mathscr{C}(M,\G)$, i.e.,  $
\widehat{\mathscr{C}}(M;\G):=  \mathscr{C}(P,\L \times \G)$ is a
chamber system covering of $\mathscr{C}(M;\G)$.
\end{itemize}

\no In our case  $\L$ will be $\S^1$ (or in one case $\S^3$).

\section{Basic tools and obstructions}

The aim of this section is to establish a number of properties and restrictions of the data to be used throughout. Unless otherwise stated $\G$ will be a compact connected Lie group and $M$ a closed simply connected positively curved manifold.

Without any curvature assumptions we have the possibly well known

\begin{lem}[Orbit equivalence]\label{oequiv}
Let $M$ be a simply connected polar $\G$ manifold. Then the slice representation of any isotropy group is orbit equivalent to that of its identity component.
\end{lem}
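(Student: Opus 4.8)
The plan is to reduce the statement to a local, linear problem and then exploit the fact that the section of the ambient polar action restricts to a section of each slice representation. Fix a point $p \in M$ with isotropy group $\G_p$, and let $V = T_pM/T_p(\G p)$ be the slice at $p$, so that $\G_p$ acts orthogonally on $V$. The first step is to recall the standard fact (from the theory of polar actions, e.g.\ as used in \cite{GZ, FGT}) that this slice representation is itself polar: if $\Sigma$ is a section for $\G$ on $M$ passing through $p$, then $T_p\Sigma \cap V$ is a section $\Sigma_p$ for the $\G_p$-action on $V$, and its associated reflection (Weyl) group $\W_p$ is the isotropy ``slice Weyl group'', namely the subgroup of $\W$ fixing $p$ in $\Sigma$. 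The key point is that $\W_p$ is generated by reflections and hence is identical whether computed for $\G_p$ or for $(\G_p)_0$ — the generators are the reflections in the walls of $\Sigma$ through $p$, and these walls and reflections are intrinsic to $\Sigma$, not to which component of $\G_p$ realizes them.

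Granting this, the second step is to observe that for a polar representation the orbit space is $\Sigma_p/\W_p$, and more precisely the orbits are recovered from the slice $\Sigma_p$ together with the action of $\W_p$ on it: every $\G_p$-orbit meets $\Sigma_p$ in a single $\W_p$-orbit, and likewise for $(\G_p)_0$. Since $\Sigma_p$ and $\W_p$ are the same for both groups, the partitions of $V$ into $\G_p$-orbits and into $(\G_p)_0$-orbits have the same orbit through every point of $\Sigma_p$, hence (as every point of $V$ is $\W_p$-equivalent, and so certainly $(\G_p)_0$-equivalent, to a point of $\Sigma_p$) the two orbit partitions of $V$ coincide. That is precisely the assertion that the slice representation of $\G_p$ is orbit equivalent to that of $(\G_p)_0$.

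I would present the argument in this order: (i) invoke simple connectivity of $M$ to guarantee that a section $\Sigma$ exists and that the relevant reflection group $\W$ is a genuine Coxeter group acting on $\Sigma$ (this is where the hypothesis is used — it is what makes $\mathscr{C}(M;\G)$ a chamber system in the first place, as in \cite{FGT}); (ii) restrict $\Sigma$ and $\W$ to the slice at $p$ to get a polar slice representation with explicit section $\Sigma_p$ and Weyl group $\W_p$; (iii) identify $\W_p$ for $\G_p$ and for $(\G_p)_0$ as the same reflection subgroup of $\W$; (iv) conclude orbit equivalence from the description of polar orbits via $(\Sigma_p, \W_p)$.

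The main obstacle is step (i)/(iii): one must be careful that disconnectedness of $\G_p$ does not produce ``extra'' identifications on $\Sigma_p$ beyond those coming from $\W_p$ — a priori an element of $\G_p$ in a nontrivial component could map $\Sigma_p$ to itself acting by a symmetry that is not a product of wall-reflections. The resolution is that any element of $\G$ preserving a section $\Sigma$ and fixing a point of it acts on $\Sigma$ as an element of the affine Weyl group generated by reflections in the walls — equivalently, the ``polar data'' along a chamber already encodes all such identifications — so no new identifications arise. It is precisely simple connectivity of $M$ that rules out the pathological possibility of a component of $\G_p$ acting by a nontrivial deck-type symmetry of $\Sigma_p$, and this is the one place the hypothesis cannot be dropped.
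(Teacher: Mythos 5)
Your reduction to the statement ``the slice Weyl group $\W_p$ is the same for $\G_p$ as for $(\G_p)_0$'' is correct, and you correctly identify the single genuine difficulty: a priori an element of a nontrivial component of $\G_p$ could normalize the slice section $\Sigma_p$ and act on it by a symmetry not already in the reflection group generated by $(\G_p)_0$. But your ``resolution'' of this difficulty is circular. The assertion that ``any element of $\G$ preserving a section and fixing a point of it acts as an element of the Weyl group generated by wall-reflections'' is not a standard fact one can simply invoke; it is exactly the content of the lemma you are asked to prove, restated. Saying that ``simple connectivity rules out the pathological possibility'' names the hypothesis without supplying the mechanism by which it acts, and without such a mechanism the argument does not close.

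Concretely, what is missing is an argument of the following type, and it is the one the paper uses. The slice representation of $\K = \G_p$ (made effective on $T_p^{\perp}$) is polar, so $\Sph(T_p^\perp)/\K_0$ is a convex chamber $C$ with nonempty boundary. The finite group $\K/\K_0$ acts on $C$ by isometries, hence fixes the soul point of $C$, the unique point at maximal distance from the boundary. That soul corresponds to a principal $\K_0$-orbit; if $\K/\K_0$ acted nontrivially on $C$, this orbit would become an \emph{exceptional} orbit for $\K$ (same dimension as principal, strictly larger isotropy). Via the slice theorem this produces an exceptional $\G$-orbit in $M$, and Theorem 1.5 of Alexandrino--T\"oben \cite{AT} says a polar action on a simply connected manifold has no exceptional orbits. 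Hence $\K/\K_0$ acts trivially on $C$, which is precisely the statement that the orbit spaces (equivalently, the Weyl data $(\Sigma_p,\W_p)$) coincide. You should either reproduce this soul-point plus no-exceptional-orbit argument, or explicitly cite the no-exceptional-orbit theorem and explain how it forces triviality of the $\K/\K_0$ action on $\Sigma_p/\W_p$; as written, your proof asserts the key step rather than establishing it. A small secondary point: simple connectivity is not what makes $\mathscr{C}(M;\G)$ a chamber system (that exists for any polar action); what it buys you here is exactly the absence of exceptional orbits.
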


\begin{proof}
Recall that the slice representation of an isotropy group $\K = \G_p \subset \G$ restricted to the orthogonal complement $T_p^{\perp}$ of the fixed point set of $\K$ inside the normal space to the orbit $\G 
p$ is a polar representation. Clearly the finite group $\K/\K_0$ acts isometrically on the orbit space $\Sph(T_p^{\perp})/\K_0$, which is isometric to a chamber $C$ of the polar $\K_0$ action on the sphere $\Sph(T_p^{\perp})$. Since $C$ is convex with  non-empty boundary its soul point (the unique point at maximal distance to the boundary) is fixed by $\K/\K_0$. This soul point, however, corresponds to a principal $\K_0$ orbit, and hence to an exceptional $\K$ orbit unless $\K/\K_0$ acts trivially on $C$. However, by Theorem 1.5 \cite{AT} there are no exceptional orbits of a polar action on a simply connected manifold. 
\end{proof}

Because of this, when subsequently talking casually about a slice representation we refer to the slice representation of its identity component unless otherwise stated.

Using positive curvature the following basic fact was derived in \cite{FGT}, Theorem 3.2:

\begin{lem}[Primitivity]\label{prim}
The group $\G$ is generated by the (identity components) of the face isotropy groups of any fixed chamber.
\end{lem}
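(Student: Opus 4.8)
The plan is to deduce the primitivity statement from the topological fact that $M$ is simply connected together with the structure of the polar orbit decomposition. Let $C$ be a fixed chamber, with face isotropy groups $\G_{\ell_r}, \G_{\ell_t}, \G_{\ell_q}$, and let $\G' \subset \G$ be the subgroup generated by the identity components $(\G_{\ell_r})_0, (\G_{\ell_t})_0, (\G_{\ell_q})_0$. I would first observe that $\G'$ is a closed connected subgroup of $\G$, and that it already acts "polarly" on the submanifold it sweeps out: since the section $\Sigma$ meets all $\G$-orbits orthogonally and the interior of $C$ consists of principal points, the orbit $\G' \cdot x$ through a principal point $x \in \mathrm{int}(C)$ has the same dimension question to be analyzed. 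The key step is to show $\G' = \G$; equivalently, that $\G/\G'$ is trivial.

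For this I would use the standard fibration/van Kampen argument for manifolds built from slices. Cover $M$ by the $\G$-saturations of a neighborhood of $C$; because $C$ is a simplex with three faces, $M$ is reconstructed (Reconstruction Theorem, cf. \cite{GZ}) from the isotropy groups along $C$ and their inclusions. The subgroup generated by the face groups acts with the property that $M/\G' \to M/\G$ is — on the level of the chamber — an isomorphism onto $C$, so $M = \G \cdot C = \G \cdot (\G' \cdot C)$, and $\G' \cdot C$ is a $\G'$-invariant set mapping onto all of $C = M/\G$. The crucial input is then: a loop in $\G$ based at $e$, projected into $\G/\G'$, can be realized by a path in a single orbit, which in turn (using that orbits through boundary points of $C$ collapse along the face directions exactly according to the face isotropy groups) bounds a disk in $M$; since $\pi_1(M) = 1$ this forces the loop to be null in $\G/\G'$, hence (as $\G'$ is connected and closed) $\G/\G'$ is a simply connected homogeneous space of dimension $0$, i.e. a point.

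More concretely, the mechanism I expect to use is: any two chambers of $\Sigma$ are related by the reflection group $\W$, and the reflections across the three walls of $C$ are induced, respectively, by elements of the three face isotropy groups (each reflection in $\W$ comes from an element of the corresponding $\G_{\ell_i}$ normalizing $\Sigma$ and acting as that reflection on $\Sigma$ — this is where the "$\ell_i$-face collapses under $\G_{\ell_i}$" picture enters). Hence $\G'$ already maps onto $\W$ acting on $\Sigma$, so $\G' \cdot C$ contains a full fundamental domain's worth of translates covering $\Sigma$, and therefore $\G' \cdot \Sigma = M$ meets every orbit; combined with $\G' \cdot C$ being $\G'$-saturated this gives $\G' \cdot \G_p = \G$ for every $p$, and a connectedness/dimension count together with $\pi_1(M)=1$ upgrades this to $\G' = \G$.

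The main obstacle, I expect, is the last implication — passing from "$\G'$ surjects onto the Weyl group and hits every orbit" to "$\G' = \G$" — which genuinely needs positive curvature (or at least the absence of exceptional orbits and a control on the fundamental group), since without it one only gets that $\G'$ is co-finite or that $\G/\G'$ is covered by a sphere-like orbit space. The cleanest route is probably to invoke that the principal isotropy $\H$ is contained in each $\G_{\ell_i}$, run the exact homotopy sequence of $\G' \hookrightarrow \G \to \G/\G'$ against the simply-connectedness of $M$ via the cell structure coming from $C$ and its faces, and quote \cite[Theorem 1.5]{AT} to rule out the exceptional-orbit escape route exactly as in the proof of Lemma \ref{oequiv}.
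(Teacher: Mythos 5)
The paper does not prove this lemma; it is quoted from \cite{FGT}, Theorem~3.2 (``Using positive curvature the following basic fact was derived in \cite{FGT}''), so there is no in-paper argument to compare your proposal against.

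Evaluating the proposal on its own terms: the framework is right (the subgroup $\G'$ generated by the face isotropy groups, the bundle picture $M \cong \G \times_{\G'} M'$ furnished by the Reconstruction Theorem of \cite{GZ}, the observation that the reflections of $\W$ are realized inside the face isotropy groups), but the concluding step does not work. From $\pi_1(M)=1$ and the homotopy exact sequence of the fibration $M' \to M \to \G/\G'$ you can only conclude that $\G/\G'$ is \emph{simply connected} and that $M'$ is connected; you cannot conclude that $\G/\G'$ is a point, since simply connected compact homogeneous spaces of positive dimension are plentiful (every $\Sph^k$ with $k\ge 2$, for instance). The passage ``$\G' \cdot \Sigma = M$ meets every orbit; combined with $\G' \cdot C$ being $\G'$-saturated this gives $\G' \cdot \G_p = \G$'' is also off: when $\G' \subsetneq \G$ the set $\G' \cdot \Sigma$ is exactly the fiber $M'$, not $M$ (it meets every $\G$-orbit because $\Sigma$ does, but that is a much weaker statement), and asserting $\G' \G_p = \G$ for a principal $p$ (where $\G_p=\H\subset\G'$) is literally equivalent to $\G'=\G$, i.e., circular.

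The genuine missing ingredient, which you honestly flag but do not supply, is the positive-curvature input that rules out a nontrivial fibration $M\to\G/\G'$. Simple connectivity of $M$ alone cannot do this. What is needed is some geometric mechanism tied to curvature --- for instance a Frankel-type intersection argument for disjoint totally geodesic submanifolds coming from distinct fibers/sections, or a direct analysis of the reduced positively curved $\G'$-manifold $M'$ --- and this is precisely the content that \cite{FGT} provides and that your proposal never identifies. A secondary caveat: the lemma asserts generation by the \emph{identity components} of the face groups, and your step ``the reflections of $\W$ come from elements of $\G_{\ell_i}$'' by itself only lands those elements in $\G_{\ell_i}$, not in $(\G_{\ell_i})_0$; that refinement also requires an argument.
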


Naturally, the slice representations of $\G_t$, $\G_q$  and $\G_r$ play a fundamental role. We denote the respective kernels of these representations by $\K_t, \K_q$ and $\K_r$ and their quotients by $\bar\G_t$, $\bar\G_q$ and $\bar\G_r$. Since in particular the slice representation of $\G_t$ is of type $\A_2$ it follows that the multiplicity triple of the polar $\G$ manifold $M$,  i.e, the
dimensions of the unit spheres in the normal slices along the edges
$\ell_q, \ell_r, \ell_t$ is $(d, d, k)\in \Bbb Z_{+}^3$, where $d=1,
2, 4$ or $8$. 

For the kernels $\K_t$ and $\K_q$, which are usually
large groups, we have:

\begin{lem}[Slice Kernel] \label{effectiveC3}
Let $M$ be a simply connected polar $\G$-manifold of type $\CC_3$.
If $\G$ acts effectively,
 then the kernel $\K_t$, respectively $\K_q$ acts effectively on the
slices $T_q^\perp$ and $T_r^\perp$,  respectively $T_t^\perp$ and $T_r^\perp$.
\end{lem}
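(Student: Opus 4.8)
The plan is to translate the statement into a purely group-theoretic assertion about the slice kernels $\K_r,\K_t,\K_q$ and then reduce that to one effectiveness statement. First, the assertion even makes sense: if $g\in\K_t$ then $g$ lies in the kernel of the slice representation of $\G_t$, hence fixes the normal slice $\exp_t(T_t^\perp)$ pointwise; the two faces $\ell_r,\ell_q$ of $C$ at $t$ lie, near $t$, inside the totally geodesic section $\Sigma\subset\exp_t(T_t^\perp)$, so $g$ fixes $\ell_r$ and $\ell_q$ pointwise, whence $g\in\G_{\ell_r}\cap\G_{\ell_q}\subset\G_q\cap\G_r$ (since $q$ is an endpoint of $\ell_r$ and $r$ is an endpoint of $\ell_q$). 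Thus $\K_t$ acts on $T_q^\perp$ and on $T_r^\perp$ through the slice representations of $\G_q$ and $\G_r$, and the kernels of $\K_t\to\O(T_q^\perp)$ and $\K_t\to\O(T_r^\perp)$ are exactly $\K_t\cap\K_q$ and $\K_t\cap\K_r$. Applying the same remark to $\K_q$, the Lemma is equivalent to saying that $\K_r,\K_t,\K_q$ pairwise intersect trivially.

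Next I would localize these intersections. The argument just given shows more generally that $\K_v\subset\G_\ell\cap\G_{\ell'}$ for the two faces $\ell,\ell'$ of $C$ at a vertex $v$; moreover (by a dimension count, or the Riemannian slice theorem) the normal slice at an interior point of $\ell$ near $v$ is contained in $\exp_v(T_v^\perp)$, so in fact $\K_v$ lies in the kernel $\K_\ell$ of the slice representation along each face $\ell$ at $v$. Hence every pairwise intersection $\K_u\cap\K_v$ lies in $\K_{\ell_r}\cap\K_{\ell_t}\cap\K_{\ell_q}$. Finally each $\K_\ell$ acts trivially on the normal space $T_p^\perp$ of an interior point $p$ of $\ell$, which contains $T_p\Sigma$, so $\K_\ell$ fixes $\Sigma$ pointwise; since the pointwise stabilizer of $\Sigma$ is precisely the principal isotropy group $\H$, we get $\K_{\ell_r}\cap\K_{\ell_t}\cap\K_{\ell_q}\subset\H$. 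In particular the whole Lemma follows once one proves
\[
\K_{\ell_r}\cap\K_{\ell_t}\cap\K_{\ell_q}=\{1\},
\]
i.e.\ that no nontrivial element of $\G$ fixes pointwise the normal slice along every face of $C$.

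This last step is where I expect the real work, and where the global hypothesis enters. Let $g$ be such an element; then $g\in\H$ and $\Fix(g,M)$ is a closed totally geodesic — hence positively curved — submanifold containing $\Sigma$ together with the slice $\exp_p(T_p^\perp)$ for every boundary point $p$ of $C$ and, trivially, for every regular $p$. On the component $F$ through $\Sigma$, the identity component $\G'$ of $Z_\G(g)$ acts polarly with the \emph{same} section $\Sigma$; checking that the reflections generating the Weyl group of $\Sigma$ are realized inside $\G'$, one finds $F$ is again a positively curved polar $\G'$-manifold of type $\CC_3$ carrying the same local data along $C$. If $g\neq 1$ then $F$ is a \emph{proper} totally geodesic submanifold, and comparing its (constant) dimension with the codimensions of the vertex and face slices it is forced to contain contradicts effectiveness of $\G$ on $M$. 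A more algebraic route to the same conclusion, available at least when the relevant kernels are finite (a situation one can often arrange via \lref{oequiv} and the classification of the rank-two polar slice representations of types $\A_2$, $\CC_2$ and $\A_1\times\A_1$): by \lref{prim} it suffices to show $g$ centralizes $\G_{\ell_r}^0,\G_{\ell_t}^0,\G_{\ell_q}^0$; each $\G_{\ell_i}^0$ normalizes the normal subgroup $\K_{\ell_i}\ni g$, and a connected group acting on a finite group acts trivially, so $g$ becomes central in $\G$ — and a central element fixing an open tube around a singular orbit must be trivial.

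The main obstacle is exactly the displayed identity; the two preceding reductions are soft. What makes it delicate is that the three faces of $C$ sit asymmetrically in the three vertex isotropy groups — each face lies in two of them, but a different two — so that a given intersection of kernels is a priori normalized by only \emph{one} of the three face isotropy groups, and propagating normality (equivalently, controlling $\Fix(g,M)$) to all three is precisely where positive curvature and effectiveness have to be used.
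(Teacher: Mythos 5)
Your two preliminary reductions are correct and essentially repeat observations the paper itself makes: $\K_t\subset\G_{\ell_r}\cap\G_{\ell_q}\subset\G_q\cap\G_r$ (so that $\K_t\cap\K_q$ is the kernel of a representation of $\K_t$, hence normal in $\K_t$), and $\K_v\subset\K_\ell$ for adjacent faces (cf.\ Remark~\ref{edge ker}). But the crucial identity $\K_{\ell_r}\cap\K_{\ell_t}\cap\K_{\ell_q}=\{1\}$, which you rightly flag as ``the real work,'' is not established by either of your two routes. The geometric route stops before the decisive dimension count: at a regular $p_0$ one has $T_{p_0}\Fix(g,M)=\Fix\bigl(\Ad g,\fg/\fh\bigr)\oplus T_{p_0}\Sigma$, and nothing in your setup forces $\Ad g$ to act trivially on $\fg/\fh$, so $\Fix(g,M)$ need not be open. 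The sentence asserting that $F$ ``is forced to contain contradicts effectiveness'' is incomplete and does not supply this control, and the claim that the component $F$ carries the same local data along $C$ would already require each $\G_v$ to \emph{centralize} $g$ --- which is essentially what needs to be proved, not something that follows from $g\in\K_v$.

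The algebraic route is closer in spirit to what the paper actually does, but your version is explicitly confined to the finite-kernel case, and that is not the typical one: as the paper remarks just before the Lemma, $\K_t$ and $\K_q$ are ``usually large groups'' (for instance $\SO(k)$, $\SU(k)$, $\Sp(k)$ in the Grassmann series), so ``a situation one can often arrange'' leaves the main body of cases untreated. The device you are missing is an almost-product structure of the \emph{vertex} isotropy group: a finite central cover of $\G_{t,0}$ splits as $\tilde{\K}_{t,0}\times\tilde{\bar\G}_t$, so a connected subgroup $\pi(\tilde{\bar\G}_t)\subset\G_t$ covering $\bar\G_t$ centralizes $\K_{t,0}$ by construction; its image under conjugation in $\Aut(\K_t)$ then lands in the \emph{finite} kernel of $\Aut(\K_t)\to\Aut(\K_{t,0})$ and is therefore trivial by connectedness, so $\pi(\tilde{\bar\G}_t)$ centralizes all of $\K_t$. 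Combined with $\K_t\cap\K_q\lhd\K_t$ (your observation) and $\G_t=\langle\K_t,\pi(\tilde{\bar\G}_t)\rangle$, this gives $\K_t\cap\K_q\lhd\G_t$; running the same argument at $q$ and invoking Primitivity (Lemma~\ref{prim}) gives $\K_t\cap\K_q\lhd\G$, and since $\K_t\cap\K_q\subset\H$, effectiveness forces it to be trivial. In short, the paper applies the ``connected group acting on a finite group acts trivially'' principle not to the kernel itself but to a finite quotient of its automorphism group, and that is precisely the step that upgrades your finite-kernel argument to the general case.
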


\begin{proof} Note that $\K_t$ fixes all sections through $t$
since $\K_t$ acts trivially on the slice $T^\perp _t$. We must prove that $\K_t\cap \K_q=\{1\}$, $\K_t\cap \K_r=\{1\}$ and $\K_q\cap \K_r=\{1\}$.
We consider only $\K_t\cap \K_q$,  since the arguments for the remaining cases are similar.

Note that since $\G$ is assumed to act effectively on $M$, and $\K_t\cap \K_q$ is contained in the
principal isotropy group,
 it suffices to prove that $\K_t\cap \K_q$ is normal in $\G$. By the primitivity  (see 
 \ref{prim}), $\G= \langle p_q^{-1}(\bar \G_{q,0}), p_t^{-1}(\bar \G_{t,0}) \rangle$, where $p_q:\G_{q} \to \bar
\G_{q}$ is the quotient homomorphism and $\bar \G_{q,0}$
is the identity component of $\bar\G_q$ and similarly for $p_t$. Thus, it suffices to show that $\K_t\cap \K_q$ is normal in each of 
$p_t^{-1}(\bar \G_{t,0})$ and $p_q^{-1}(\bar \G_{q,0})$. In each case, assuming the effective vertex isotropy group is connected does not alter the proof only simplifies notation. Accordingly, we proceed to assume that $\bar \G_t$ is connected, i.e., $\bar \G_t = \bar \G_{t,0}$ and will show that $\K_t\cap \K_q$ is a normal subgroup of $\G_t$.

Note that $\K_t\cap \K_q$ is a normal subgroup of $\K_t$ acting
trivially on both the slices $T_t^\perp$ and $T_q^\perp$.

By assumption  the 
quotient map $\G_{t,0}\subset \G_t\to \bar \G_t$ is surjective when
restricted to the identity component $\G_{t,0}$ of $\G_t$. A finite central cover
$\tilde \G_{t,0}$ of $\G_{t,0}$ is isomorphic to the product
$\tilde {\K}_{t,0}\times \tilde{{\bar \G}}_t$
where $\tilde {\K}_{t,0}$
is locally isomorphic to the identity component $\K_{t,0}$ of $\K_t$ and
$\tilde{{\bar \G}}_t$ is locally isomorphic to ${\bar \G}_t$.
In particular, $\G_t$
contains a connected and closed subgroup $\pi(\tilde {\bar{\G_t}})$ covering
$\bar \G_t$,
 where $\pi :\tilde \G_{t,0}\to \G_{t,0}$ is the cover
map.
Moreover, every element of the subgroup $\pi(\tilde {\bar{\G_t}})$ commutes with
the elements in $\K_{t,0}$.
On the other hand, for every $h\in \pi(\tilde {\bar{\G_t}})$,
the conjugation by $h$ gives rise to an element in the automorphism
group $\text{Aut}(\K_{t})$ since $\K_t$ is normal, hence defines a
homomorphism $\phi: \pi(\tilde {\bar{\G_t}})\to \text{Aut}(\K_{t})$.
Since $\phi(\pi (\tilde {\bar{\G_t}}))$ has a trivial image in
$\text{Aut}(\K_{t,0})$ under the forgetful homomorphism
$\text{Aut}(\K_{t})\to \text{Aut}(\K_{t,0})$, the group $\phi(\pi
(\tilde {\bar{\G_t}}))$ is  finite, and hence trivial
because  $\phi(\pi (\tilde {\bar{\G_t}}))$ is  connected. This
implies that the elements of $\pi(\tilde {\bar{\G_t}})$ commute with
the elements of $\K_{t}$. Since $\G_t=\langle \K_t, \pi (\tilde
{\bar{\G_t}})\rangle $ and $\K_t\cap \K_q$ is normal in $\K_t$, it then follows
 that $\K_t\cap \K_q$ is a
normal subgroup of $\G_{t}$. 

As mentioned above, the same arguments show that $\K_t\cap \K_q$ is normal in $p_t^{-1}(\bar \G_{t,0})$ in case ${\bar \G}_t$ is not connected. The same arguments also show that $\K_t\cap \K_q$ is normal in $p_q^{-1}(\bar \G_{q,0})$.
\end{proof}

\begin{rem}\label{t-rem}
It turns out that in all cases $\bar \G_t$ is connected. In fact, this is automatic whenever $d 
\ne 2$, since $\bar \G_t$ acts transitively on a projective plane. Up to local isomorphism its identity component is one of the
groups $\SO(3), \SU(3), \Sp(3)$, or $\F_4$ corresponding to $d = 1,2,4$ and $8$ respectively, and the slice
representation is its standard polar representation of type $\A_2$ (see also Table \ref{t-rep}). In view of the Transversality Lemma \ref{transv} below, $\G_t$ is connected whenever $k \ge 2$. In the $(2,2,1)$ case, the connectedness of $\G_r$ (again by Lemma \ref{transv}) implies that also in this case  $\bar{\G}_t$ is connected (see Proposition \ref{flip2}).
\end{rem}

The following simple topological consequence of transversality combined with the fact that the canonical deformation retraction of the orbit space triangle minus any side to its opposite vertex lifts to $M$ (or alternatively of the work \cite{Wie}) will also be used frequently:

\begin{lem}[Transversality]\label{transv}
Given a multiplicity triple $(d,d,m)$. Then the inclusion maps $\G/\G_r \subset M,  \G/\G_q \subset M$ and $\G/\G_{\ell_t} \subset M$ are $d$-connected, $\G/\G_{\ell_r} \subset M$, and $\G/\G_{\ell_r} \subset M$ are $\min\{d,m\}$ connected, and  $\G/\G_t \subset M$ is $m$-connected.
\end{lem}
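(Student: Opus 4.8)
The plan is to exploit the fact that the orbit space $C = M/\G$ is a geodesic triangle and that the canonical deformation retractions of $C$ (onto a vertex after deleting the opposite open edge, or onto an edge after deleting the opposite vertex) lift $\G$-equivariantly to $M$. Concretely, let $\pi\colon M \to C$ be the orbit map. For a vertex $v\in\{t,r,q\}$ with opposite edge $\ell_v$, the triangle $C$ minus the closed edge $\ell_v$ deformation retracts onto $\{v\}$; pulling back along $\pi$ this exhibits $M \setminus \pi^{-1}(\ell_v)$ as equivariantly deformation retracting onto the singular stratum $\pi^{-1}(v) = \G/\G_v$. Hence the pair $(M, \G/\G_v)$ has the same connectivity as the pair $(M, M\setminus \pi^{-1}(\ell_v))$. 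The latter is controlled by the codimension of the stratum $\pi^{-1}(\ell_v)$ in $M$: by general position / a Thom-class (Gysin) argument, or equivalently by the normal slice description, $(M, M \setminus \pi^{-1}(\ell_v))$ is $c$-connected where $c+1$ is the (real) codimension of $\pi^{-1}(\ell_v)$, which equals the dimension of the unit sphere in the normal slice along $\ell_v$.

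First I would record the codimensions from the multiplicity triple $(d,d,m)$: the slices along $\ell_q$ and $\ell_r$ have unit spheres of dimension $d$, and the slice along $\ell_t$ has unit sphere of dimension $m$. Therefore $\pi^{-1}(\ell_q)$ and $\pi^{-1}(\ell_r)$ have codimension $d+1$ in $M$, while $\pi^{-1}(\ell_t)$ has codimension $m+1$. Feeding this into the paragraph above: deleting $\ell_t$ (codimension $m+1$) and retracting gives that $\G/\G_t \hookrightarrow M$ is $m$-connected; deleting $\ell_q$ or $\ell_r$ (codimension $d+1$) gives that $\G/\G_q \hookrightarrow M$ and $\G/\G_r \hookrightarrow M$ are $d$-connected. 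For the edge isotropy groups one argues dually: $C$ minus the vertex $r$ deformation retracts onto the opposite edge $\ell_r$, so $M \setminus \pi^{-1}(r)$ retracts onto $\pi^{-1}(\ell_r) \supset \G/\G_{\ell_r}$ (the inclusion $\G/\G_{\ell_r}\hookrightarrow \pi^{-1}(\ell_r)$ is itself highly connected, being an $m$- or $d$-disc bundle projection up to homotopy); since $\pi^{-1}(r) = \G/\G_r$ has codimension equal to the dimension of the unit sphere in the full normal space to $\G/\G_r$, which is $2d + m$ (the three edges through $r$ contribute $d, d, m$, or rather the vertex $r$ has angle $\pi/2$ between edges of multiplicities... one must be careful here), the pair $(M, M\setminus \pi^{-1}(r))$ is highly connected — in any case more than $\min\{d,m\}$. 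Combining, $\G/\G_{\ell_r} \hookrightarrow M$ is $\min\{d,m\}$-connected, and likewise for the other edges; the stated bound $\min\{d,m\}$ is the common floor coming from the weakest of these estimates. Finally $\G/\G_{\ell_t} \hookrightarrow M$ is $d$-connected because the slices transverse to $\ell_t$ have dimension governed by $d$.

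The main obstacle — and the point that needs genuine care rather than bookkeeping — is making the "lift of the deformation retraction" step rigorous and verifying that the retraction of $M \setminus \pi^{-1}(\ell_v)$ onto $\G/\G_v$ is through a cofibration so that the connectivity statement for the pair transfers cleanly, and symmetrically that $\G/\G_{\ell_v}$ is the correct homotopy model for $\pi^{-1}(\ell_v)$. One clean way is to invoke the cited work of Wiesendorf \cite{Wie}, who establishes exactly such connectivity estimates for the inclusions of orbits in polar (and more generally Coxeter) manifolds via the stratified structure; alternatively one quotes the disc-bundle decomposition of $M$ along the faces of $C$ from the Reconstruction Theorem \cite{GZ}, writes $M$ as an iterated pushout of normal disc bundles over the orbit strata, and reads off the connectivity of each inclusion from the dimensions of the gluing spheres by standard CW/excision arguments. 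Either route reduces the lemma to the elementary codimension computation above; I would present the \cite{Wie} route as primary and sketch the disc-bundle route as the self-contained alternative.
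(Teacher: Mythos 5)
Your treatment of the vertex orbits $\G/\G_t$, $\G/\G_q$, $\G/\G_r$ is exactly the paper's intended route, and it is correct: delete the closed edge opposite the vertex $v$, lift the radial retraction of $C$ minus that edge onto $\{v\}$ to an equivariant retraction of $M\setminus\pi^{-1}(\text{opposite closed edge})$ onto $\G/\G_v$, and note that the complement of a finite union of strata of codimension $\ge k+1$ includes $k$-connectedly. This is precisely what the paper's one-line justification before the lemma says (with \cite{Wie} only as an alternative). So no issue there.

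The gap is in the edge orbits. You propose to delete the opposite vertex $r$, retract onto $\pi^{-1}(\overline{\ell_r})$, and then assert that $\G/\G_{\ell_r}\hookrightarrow\pi^{-1}(\overline{\ell_r})$ is highly connected because the latter is ``an $m$- or $d$-disc bundle projection up to homotopy.'' That is not so: $\pi^{-1}(\overline{\ell_r})$ is not a disc bundle over the principal edge orbit, and is in general not even a manifold. Near a vertex orbit such as $\G/\G_q$ its normal cross-section is the cone over the singular orbit $\G_q/\G_{\ell_r}$ of the $\G_q$-slice representation, which is typically not a sphere (for the real Grassmann series it is a Stiefel manifold $\SO(k+2)/\SO(k)$). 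So the claimed connectivity of $\G/\G_{\ell_r}\hookrightarrow\pi^{-1}(\overline{\ell_r})$ does not follow, and your factorization does not deliver the stated $\min\{d,m\}$ bound. The fix is to argue symmetrically to the vertex case: remove the two \emph{other} closed edges instead of the opposite vertex. For $\G/\G_{\ell_r}$, delete $\pi^{-1}(\overline{\ell_q})\cup\pi^{-1}(\overline{\ell_t})$, whose strata have codimension $d+1$, $m+1$ (and larger at the vertices); the complement retracts equivariantly onto $\pi^{-1}(\operatorname{int}\ell_r)\simeq\G/\G_{\ell_r}$, and the inclusion of the complement is $\min\{d,m\}$-connected by the same transversality count. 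The same device gives the $d$-connectivity of $\G/\G_{\ell_t}\hookrightarrow M$ by deleting the closed edges $\ell_q,\ell_r$, both of multiplicity $d$. Two smaller slips worth correcting: each vertex of $C$ has only two incident edges, not three, so the relevant codimension of $\G/\G_r$ is $d+m+2$ and ``$2d+m$'' is off; and codimension is the dimension of the normal sphere plus one, not the sphere dimension itself.
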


Recall here that a continuos map is said to be \emph{$k$ - connected} if the induced map between the $i$th homotopy groups is an isomorphism for $i < k$ and a surjection for $i=k$.

Another Connectivity Theorem \cite{Wi3} (Theorem 2.1) using positive curvature \emph{\'{a} la Synge} is very powerful:

\begin{lem}[Wilking]\label{connect}
Let $M$ be a positively curved $n$-manifold and $N$ a totally geodesic closed codimension $k$ submanifold. Then the inclusion map $N \to M$ is $n-2k+1$ connected.

If in addition $N$ is fixed by an isometric action of a compact  Lie group $\K$ with principal orbit of dimension $m(\K)$, then the inclusion map is $n-2k+1+ m(\K)$ connected.
\end{lem}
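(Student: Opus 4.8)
This is Theorem~2.1 of \cite{Wi3}, and the plan is to follow that argument. First I would set up Morse theory for the energy functional $E$ on the space $P$ of paths $c\colon[0,1]\to M$ with $c(0),c(1)\in N$ (rigorously via the finite dimensional approximation by broken geodesics, $M$ being closed hence complete). Its absolute minimum $E^{-1}(0)$ is the set of constant paths, canonically identified with $N$, and is a nondegenerate critical submanifold of index $0$; the remaining critical points are precisely the nonconstant geodesics $\gamma$ meeting $N$ orthogonally at both endpoints. Comparing the path fibration $\{c\colon c(0)\in N\}\to M$, $c\mapsto c(1)$ (whose total space deformation retracts onto $N$) with the fibration $P\to N$, $c\mapsto c(1)$ (which admits the section by constant paths) — both having fibre the space of paths from $N$ to a fixed point of $N$ — one obtains a natural isomorphism $\pi_i(M,N)\cong\pi_{i-1}(P,N)$. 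Hence $N\hookrightarrow M$ is $\ell$--connected as soon as every nonconstant critical geodesic of $E|_P$ has Morse index $\ge\ell$, and both assertions are thereby reduced to an index estimate, with $\ell=n-2k+1$, respectively $\ell=n-2k+1+m(\K)$.

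For the first estimate I would use parallel vector fields. Because $N$ is totally geodesic its second fundamental form vanishes, so on fields $V$ along a critical geodesic $\gamma$ with $V(0)\in T_{\gamma(0)}N$ and $V(1)\in T_{\gamma(1)}N$ the index form has no boundary contribution and equals $\int_0^1\bigl(|V'|^2-\langle R(V,\gamma')\gamma',V\rangle\bigr)\,dt$. If such a $V$ is parallel, then $\langle V,\gamma'\rangle$ is constant and vanishes at $t=0$ since $\gamma'(0)\perp T_{\gamma(0)}N$, so $V\perp\gamma'$ throughout and, by positive curvature, the index form equals $-\int_0^1\langle R(V,\gamma')\gamma',V\rangle\,dt<0$. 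Admissible parallel fields correspond bijectively (via $V\mapsto V(0)$) to the subspace $\Pi=T_{\gamma(0)}N\cap P_\gamma^{-1}\bigl(T_{\gamma(1)}N\bigr)$ of $T_{\gamma(0)}M$, $P_\gamma$ denoting parallel transport along $\gamma$; since $\gamma'$ is parallel and $\gamma'(1)\perp T_{\gamma(1)}N$, both $T_{\gamma(0)}N$ and $P_\gamma^{-1}(T_{\gamma(1)}N)$ lie in the hyperplane $\gamma'(0)^{\perp}$, where each has codimension $k-1$, whence $\dim\Pi\ge(n-1)-2(k-1)=n-2k+1$. Thus the index form is negative definite on a subspace of dimension $\ge n-2k+1$, and the first part follows.

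For the equivariant refinement one must gain a further $m(\K)$. I would first reduce to the case $N=\Fix(\K)$: otherwise $N$ is totally geodesic in the positively curved manifold $\Fix(\K)$, which is itself totally geodesic in $M$ and fixed by $\K$, and composing the two inclusions — using the first part for $N\hookrightarrow\Fix(\K)$ and induction on codimension for $\Fix(\K)\hookrightarrow M$ — already yields the required connectivity. When $N=\Fix(\K)$, every Killing field $X^{*}$ of the $\K$--action vanishes identically along $N$, so its restriction to a nonconstant critical geodesic $\gamma$ is a Jacobi field vanishing at both endpoints; consequently $\gamma'(0)\in\nu_{\gamma(0)}N$ is moved nontrivially by $\K$, $\gamma$ is orthogonal to every orbit $\K\cdot\gamma(t)$, and $\gamma(1)$ is conjugate to $\gamma(0)$ along $\gamma$ with multiplicity at least $\dim(\K\cdot\gamma)$. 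The plan is then to combine these conjugacies with the focal--index bookkeeping for a geodesic running between two totally geodesic submanifolds and with the structure of the slice representation of $\K$ on $\nu_{\gamma(0)}N$ — absorbing, for a $\gamma$ whose initial velocity lies on a nonprincipal orbit, the deficit $m(\K)-\dim(\K\cdot\gamma)$ into the correspondingly larger focal multiplicity there — in order to conclude that $\operatorname{ind}(\gamma)\ge n-2k+1+m(\K)$. I expect this last index computation to be the crux: the Killing--Jacobi fields above lie in the nullspace of the index form rather than in its negative part, so extracting the extra $m(\K)$ is genuinely more than a dimension count. It is carried out in detail in \cite{Wi3}; taking $\K$ trivial recovers the first part.
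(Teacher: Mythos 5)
The paper itself does not prove this lemma; it is stated as a quotation of Theorem~2.1 of \cite{Wi3} and used as a black box, so the only real benchmark is Wilking's original argument. Against that benchmark your write-up splits into two very different halves.

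Your proof of the first assertion is complete and correct, and it is essentially Wilking's: pass to the path space $P$ of paths with both endpoints in $N$, identify $\pi_i(M,N)$ with $\pi_{i-1}(P,N)$, and lower-bound the Morse index of every nonconstant $N$-to-$N$ geodesic by producing a negative-definite subspace of parallel fields. The dimension count $\dim\Pi\ge (n-1)-2(k-1)=n-2k+1$ and the sign check (no boundary term because $N$ is totally geodesic; $V\perp\gamma'$ because $\langle V,\gamma'\rangle$ is parallel and vanishes at $t=0$; then $I(V,V)=-\int\langle R(V,\gamma')\gamma',V\rangle<0$ by positive curvature) are exactly right.

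For the second assertion there is a genuine gap, which you yourself flag. Your reduction to $N=\Fix(\K)$ is sound once one notes $m(\K)\le k_1-1<k_1$ where $k_1=\operatorname{codim}\Fix(\K)$ (orbits of points near $\Fix(\K)$ lie inside the normal spheres, which have dimension $k_1-1$); this inequality is what makes the two connectivity estimates compose to the required $n-2k+1+m(\K)$, and you should state it explicitly rather than writing ``induction on codimension,'' which doesn't quite describe what is happening. The real issue is the index estimate itself. Your observation that Killing fields restrict to Jacobi fields along $\gamma$ vanishing at both endpoints, of multiplicity $\dim(\K\cdot\gamma'(0))$, is correct but, as you note, these fields lie in the \emph{null} space of the index form, not its negative part, so simply adding them to $\Pi$ gains nothing. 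Moreover $\dim(\K\cdot\gamma'(0))$ can be strictly less than $m(\K)$ when $\gamma'(0)$ is a nonprincipal normal direction, so even converting that nullity into index would not by itself close the gap. Turning the extra nullity/focal multiplicity into index, and absorbing the deficit at nonprincipal directions, is precisely the nontrivial part of Wilking's proof, and your proposal explicitly defers it to \cite{Wi3} rather than carrying it out. So as a self-contained argument the second half is incomplete, though conceptually it is pointed in the right direction.
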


We conclude this section with two severe restrictions on $\G$ stemming from positive curvature.

The first follow from the well known \emph{Synge type} fact, that an isometric $\T^k$ action has orbits with $\dim \le 1$ in odd dimensions  and $0$ in even dimensions, when $M$ has positive curvature (cf. \cite{Su}). In particular, since $\G_q$ has maximal rank among the isotropy groups, and the Euler
characteristic $\chi(\G/\G_q) > 0$ if and only if $\text{rk}(\G)= \text{rk}(\G_q)$ (\cite{HS} page 248) we conclude

\begin{lem}[Rank Lemma]\label{rank}
The dimension of $M$ is even if and only if $\text{rk}(\G)= \text{rk}(\G_q)$, and otherwise rank $\text{rk}(\G) = \text{rk}(\G_q)+1$.
\end{lem}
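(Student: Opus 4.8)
The plan is to deduce this from the Synge‑type fact cited just before the statement, applied to a maximal torus of $\G_q$ and to a maximal torus of $\G$, together with the Euler characteristic criterion from \cite{HS}. First I would fix a maximal torus $\T \subset \G_q$; since $\G_q$ has maximal rank among the isotropy groups (because $q$ is the vertex of $C$ whose angle is $\pi/4$, so the section's reflection group has $\G_q$ as the isotropy of a codimension‑two stratum generated by the two reflections in $\ell_r$ and $\ell_t$), it suffices to compare $\rank(\G_q)$ with $\rank(\G)$ and rule out that they differ by more than one. Extend $\T$ to a maximal torus $\T' \subset \G$. If $\rank(\G) \ge \rank(\G_q) + 2$, pick a subtorus $\T'' \subset \T'$ of dimension $2$ with $\T'' \cap \G_q$ finite; acting on $M$, the orbit $\T'' \cdot q$ would have dimension $2$, contradicting the Synge‑type dimension bound (orbits of an isometric torus action on a positively curved $M$ have dimension $\le 1$ in odd dimensions and $\le 0$... wait, $\le 1$ always in odd and $0$ in even—in either case $\le 1$). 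Hence $\rank(\G) \le \rank(\G_q) + 1$, and of course $\rank(\G) \ge \rank(\G_q)$ trivially.

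Next I would pin down exactly when equality $\rank(\G) = \rank(\G_q)$ holds versus when the rank drops by one, in terms of the parity of $\dim M$. By the classical fact (Hopf–Samelson, as referenced via \cite{HS}), $\chi(\G/\G_q) > 0$ if and only if $\rank(\G) = \rank(\G_q)$, and $\chi(\G/\G_q) = 0$ otherwise. So I need to relate $\chi(\G/\G_q)$ to the parity of $\dim M$. For this I would use that $\G/\G_q$ sits inside $M$ via the orbit inclusion at the vertex $q$, and that $M$ retracts (equivariantly, away from the opposite face) so that $\G/\G_q \hookrightarrow M$ is highly connected — concretely, by the Transversality Lemma \ref{transv}, $\G/\G_q \subset M$ is $d$‑connected with $d \ge 1$, in particular $\pi_1$‑surjective and an isomorphism on $\pi_0,\pi_1$, and more: one can push this further using the fact that collapsing the side $\ell_q$ retracts $C$ onto the opposite vertex $q$, which lifts to a $\G$‑equivariant deformation retraction showing $M$ is rationally (indeed homotopically up to the relevant range) built from $\G/\G_q$. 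Alternatively, and more robustly, I would argue directly on parity: an isometric torus action on a positively curved closed $M$ has a fixed point when $\dim M$ is even (again Synge/Berger), so $M^{\T}\neq\emptyset$ and the fixed point components are again positively curved; combined with the structure of the chamber $C$ one identifies a $\T$‑fixed point lying over the vertex $q$, forcing $\rank(\G) = \rank(\G_q)$ precisely in the even case.

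So the cleaner route: when $\dim M$ is even, Berger's theorem gives that the maximal torus $\T' \subset \G$ has a fixed point $p \in M$; then $\T' \subset \G_p$, and since $\G_p$ is conjugate into one of the finitely many isotropy groups along $C$ and has rank $= \rank(\G)$, while $\G_q$ has the maximal rank among these, we get $\rank(\G) = \rank(\G_q)$. Conversely, when $\dim M$ is odd, no isometric torus action can have a fixed point of corank‑$0$ isotropy in a way forcing full rank — more precisely, $\chi(M) = 0$, and an equivariant Euler‑characteristic/Mayer–Vietoris argument along the decomposition of $M$ into the two disk bundles over $\G/\G_{\ell_q}$‑type strata, or simply the observation that $\G/\G_q$ fibers over an odd‑dimensional base forcing $\chi(\G/\G_q) = 0$, yields $\rank(\G) = \rank(\G_q) + 1$ by Hopf–Samelson. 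Either way, combined with the rank‑drop bound from the first paragraph, this gives the dichotomy as stated.

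The main obstacle I anticipate is the clean identification of the parity of $\dim M$ with the vanishing or non‑vanishing of $\chi(\G/\G_q)$: one must argue that $\chi(\G/\G_q) \neq 0$ exactly when $\dim M$ is even. The quickest justification is that $M$ is odd‑dimensional $\iff \chi(M)=0$, and the stratified structure of the polar action (two "tube" pieces glued along their boundaries, each a disk bundle over an orbit, with the orbit $\G/\G_q$ appearing as the "most singular" stratum) forces $\chi(M)$ and $\chi(\G/\G_q)$ to have the same vanishing behaviour via an equivariant Mayer–Vietoris / Conner's theorem computation ($\chi(M) = \sum \chi(\text{strata})$ with the contractible chamber $C$ as orbit space, so $\chi(M) = \chi(\G/\G_q) + (\text{contributions from faces and edges, which are Euler‑characteristic zero fibrations over spheres or are themselves of the same type})$). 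Making this bookkeeping precise — ensuring the lower strata contribute zero to $\chi$ — is the step requiring care, but it is routine given the $\CC_3$ chamber structure and the positive‑curvature constraint that already forces $\chi(M) \ge 0$. The Synge‑type orbit‑dimension bound and the Hopf–Samelson criterion do the rest.
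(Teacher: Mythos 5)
Your proposal invokes the same two tools the paper cites (the Synge/Berger--type orbit-dimension result and the Hopf--Samelson Euler-characteristic criterion, combined with the maximality of $\rank(\G_q)$ among isotropy groups), but your first paragraph contains a genuine error in how the Synge-type fact is applied. That result is \emph{existential}, not universal: for a closed positively curved $M$, an isometric torus action has \emph{some} orbit of dimension $\le 1$ when $\dim M$ is odd (respectively \emph{some} fixed point when $\dim M$ is even); it does not bound the dimension of every orbit. Indeed, a torus of rank $k$ acting on a positively curved manifold generically has $k$-dimensional principal orbits, so exhibiting a $2$-dimensional orbit $\T''\cdot q$ is not a contradiction. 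The correct use runs the other way: the Synge-type fact supplies a point $p$ at which $\rank(\G_p) \ge \rank(\G) - 1$ (odd case) or $\rank(\G_p) = \rank(\G)$ (even case), and since $\G_q$ has maximal rank among the isotropy groups this gives $\rank(\G_q) \ge \rank(\G) - 1$, respectively $= \rank(\G)$. Your third paragraph actually recovers the even-dimensional direction correctly in exactly this way, which makes the misstep in paragraph one more glaring.

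The remaining task --- showing that in the odd-dimensional case $\rank(\G_q) < \rank(\G)$ --- is the one you leave at the level of a sketch, and it is the crux. Your suggestion that the lower strata contribute zero to $\chi(M)$, or that $\G/\G_q$ ``fibers over an odd-dimensional base forcing $\chi(\G/\G_q)=0$'', needs justification you do not supply: there is no \emph{a priori} reason the face and edge orbits have vanishing Euler characteristic, and the parity of $\dim(\G/\G_q)$ is precisely what one is trying to control. What would close the gap is the observation that, in the $\CC_3$ setting, the effective slice at $q$ is even-dimensional (the spheres $\Sph^n$ in Table~\ref{q-rep} all have $n$ odd), so $\dim M \equiv \dim(\G/\G_q) \equiv \rank(\G)-\rank(\G_q) \pmod 2$; combined with the corank bound from the correct Synge argument this immediately yields the dichotomy via Hopf--Samelson. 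As written, your proposal fills in a wrong step and leaves the essential one open.
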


When adapting Wilking's \emph{Isotropy Representation} Lemma 3.1 from \cite{Wi2} for positively curved $\G$ manifolds to polar manifolds of type $\CC_3$ we obtain:

\begin{lem} [Sphere Transitive Subrepresentations] \label{WilkC3}
Let $\L_i\lhd \G_{\ell _i}$, $i\in \{q, r,t\}$ be a simple normal subgroup and $U$ an
irreducible isotropy subrepresentation of $\G/\L_i$. Then $(U, \L_i)$ is isomorphic to a
standard defining representation. In particular, $\L_i$ acts transitively on the sphere $\Sph (U)$.
\end{lem}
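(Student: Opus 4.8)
The plan is to adapt Wilking's Isotropy Representation Lemma (Lemma 3.1 of \cite{Wi2}) to the present polar $\CC_3$ setting, the point being that the positivity of the curvature forces strong connectivity which in turn rigidifies the isotropy representation of a simple normal subgroup of a face isotropy group. First I would fix a face $\ell_i$ with $i \in \{q,r,t\}$, a simple normal factor $\L_i \lhd \G_{\ell_i}$, and the orbit $\G/\L_i \subset M$ (more precisely I work with the $\L_i$-orbit through a point on the face, which is totally geodesic in $M$ since faces correspond to fixed point sets of slice actions). The starting observation is that by the Transversality Lemma \ref{transv}, the inclusion $\G/\G_{\ell_i} \hookrightarrow M$ is highly connected (at least $\min\{d,m\}$-connected, and $d$-connected for $i = t$), so $\G/\L_i$ — which fibers over $\G/\G_{\ell_i}$ with fiber $\G_{\ell_i}/\L_i$ — inherits enough connectivity, and then Wilking's Connectivity Lemma \ref{connect}, applied to the totally geodesic fixed point set of the slice kernel acting on $M$, gives a periodicity/connectivity statement for the pair $(M, \G/\L_i)$.

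The key step is the following dichotomy, exactly as in \cite{Wi2}: either the isotropy representation $U$ of $\L_i$ on the normal space factors through a proper quotient in a way that contradicts the simplicity of $\L_i$ together with the connectivity just established, or $\L_i$ must act on $\Sph(U)$ with an orbit of codimension at most one. Here I would use the classification of transitive actions on spheres (the Montgomery--Samelson--Borel list) together with the fact that $\L_i$ is \emph{simple}: the only simple groups acting transitively on a sphere via an irreducible representation are $\SO(n), \SU(n), \Sp(n), \G_2, \Spin(7), \Spin(9)$ in their standard (defining, or spin) representations. The connectivity bound from \lref{connect} rules out the low-dimensional degenerate possibilities and the cohomogeneity-one-on-the-sphere case, leaving only the genuinely transitive ones; irreducibility of $U$ then pins down $(U,\L_i)$ as a standard defining representation, and transitivity on $\Sph(U)$ is immediate.

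The main obstacle I expect is the bookkeeping needed to push Wilking's argument through when $\L_i$ is only a \emph{normal} factor rather than all of $\G_{\ell_i}$: one has to control the action of the complementary factors of $\G_{\ell_i}$ (and of $\G$) on $U$ in order to isolate the $\L_i$-action, and to verify that the relevant fixed point sets are totally geodesic of the right codimension so that \lref{connect} applies with the stated bound. A secondary technical point is handling the possible disconnectedness of $\G_{\ell_i}$; as in \rref{t-rem} and the proof of \lref{effectiveC3}, passing to identity components (and using that we have already reduced to slice representations of identity components via \lref{oequiv}) costs nothing essential. Once these reductions are in place, the argument is a direct transcription of \cite[Lemma 3.1]{Wi2} combined with the transitive-actions-on-spheres classification, so I would present it concisely, citing \cite{Wi2} for the core connectivity estimate and spelling out only the $\CC_3$-specific input, namely the multiplicity triple $(d,d,k)$ and the connectivity values from \lref{transv}.
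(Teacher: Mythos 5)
Your plan names Wilking's Isotropy Representation Lemma but then describes something else: the dichotomy ``either $U$ factors through a proper quotient, or $\L_i$ has a codimension-at-most-one orbit on $\Sph(U)$'' is not what \cite[Lemma~3.1]{Wi2} says, and it is also not what the paper uses. The paper's proof is a two-line consequence of the actual Isotropy Lemma: an irreducible subrepresentation $U$ of the isotropy representation of $\G/\L_i$ that is \emph{not} a summand of the slice representation of $\L_i$ on $T_i^\perp$ must be equivalent to a summand of the isotropy representation of $\L_i^*/\L_i$, where $\L_i^*$ is one of the two \emph{vertex} isotropy groups adjacent to the face $\ell_i$. Since those vertex isotropy groups, together with their slice representations and the embeddings of the face isotropy groups into them, are listed completely in Tables~\ref{t-rep} and~\ref{q-rep}, one simply reads off that every such isotropy piece for a simple normal factor $\L_i$ is a standard defining representation; transitivity on $\Sph(U)$ is then automatic. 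No appeal to the classification of transitive sphere actions, to \lref{transv}, or to \lref{connect} is made.

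The concrete gaps in your version are these. (i) For a \emph{simple} compact Lie group, a nontrivial irreducible representation never factors through a proper quotient except by a finite center, so your first branch of the dichotomy is essentially empty; the heavy lifting has to be done in the second branch, which you leave at ``orbit of codimension at most one.'' (ii) You then need to upgrade that to honest transitivity, and the claim that \lref{connect} ``rules out the low-dimensional degenerate possibilities and the cohomogeneity-one-on-the-sphere case'' is not substantiated: you would have to produce a totally geodesic submanifold of controlled codimension for which the connectivity bound actually forces transitivity, and it is not apparent which one that is. (iii) Even granting transitivity, the Montgomery--Samelson--Borel list still contains the spin representations of $\Spin(7)$, $\Spin(9)$ and the seven-dimensional $\G_2$-representation, so you would need a further argument to pin the pair $(U,\L_i)$ down to the representations the tables allow. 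The paper sidesteps all three issues by routing the question through the explicitly known vertex isotropy groups, which is exactly where the $\CC_3$-specific input enters; that is the step missing from your proposal.
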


\begin{proof}
Let $\U$ be an irreducible isotropy subrepresentation of $\G/\L_i$
not isomorphic to a summand of the slice representation of $\L_i$ on
$T_i^\perp$. By \cite{Wi2}, $\U$ is isomorphic to a summand of the
isotropy representation of $\L_i^*/\L_i$, where $\L_i^*$ is a vertex
isotropy group. On the other hand, the almost effective factor of
$\L_i^*$ is well understood (cf. the tables \ref{t-rep} and \ref{q-rep}), which are all the
standard defining  representation. The desired result follows.
\end{proof}

\section{The $\CC_3$ building axiom}  \label{axiom} 

Recall that Tits has provided an axiomatic characterization of buildings of irreducible type $\M$ when  the geometric realization $|\mathscr{C}|$ ($\mathscr{C}$ with
the thin topology) of the associated chamber system  $\mathscr{C}$,
is a \emph{simplicial complex}. This characterization is given in
terms of the \emph{incidence geometry} associated with
$\mathscr{C}$.

The purpose of this section is to describe this characterization when $\M = \CC_3$ and translate it to our context.

 Here, by definition

\begin{itemize}
\item
Vertices $x,y \in |\mathscr{C}|$ are \emph{incident}, denoted $x*y$,
if and only if $x$ and $y$ are contained in a closed chamber of
$|\mathscr{C}|$.
\end{itemize}

Clearly, the incidence relation (not an equivalence relation) is
preserved by the action of $\G$ in our case.

To describe the needed characterization we will use the following standard terminology:

\begin{itemize}
\item
The {\it shadow} of a vertex $x$ on the set of vertices of type $i
\in I$, denoted $\text{Sh}_i(x)$, is the union of all vertices of
type $i$ incident to $x$.
\end{itemize}

Following Tits \cite{Ti2}, when $\M = \CC_3$, we call the vertices of type $q$, $r$ and
$ t$,  {\it points, lines}, and \emph{planes}  respectively. We denote
by $Q, R$ and $T$ the set of points, lines, and planes in
$\mathscr{C}(M;\G)$. Notice that $\G$ acts transitively on $Q$, $R$
and $T$. With this terminology the axiomatic characterization \cite{Ti2} (cf. Proposition 9 and the proof of the $\CC_3$ case on p. 544) alluded to above states:

\begin{thm}[$\CC_3$ Axiom]\label{c3axiom}
A connected  Tits geometry of type $\CC_3$  is a building if and
only if the following axiom holds:

\quad $\bullet$ \emph{(LL)}   If two lines are both incident to two
different points, they coincide.

\no Equivalently:

\quad $\bullet$  If $\emph{Sh}_Q(r) \cap \emph{Sh}_Q(r')$ has
cardinality at least two, then $r = r'$.

\no or:

\quad $\bullet$ For any $q, q' \in Q$, with $q \ne q'$,
$\emph{Sh}_R(q) \cap \emph{Sh}_R(q')$ has cardinality at most one.
\end{thm}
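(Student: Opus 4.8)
Since this is Tits' characterization (\cite{Ti2}, Proposition~9 and the discussion on p.~544), the plan is to reconstruct his argument. Write $\Gamma=|\mathscr{C}|$ for the Tits geometry, with point set $Q$, line set $R$, plane set $T$ and incidence relation $*$. From the chamber angles $\pi/2$, $\pi/3$, $\pi/4$ at $r$, $t$, $q$ one reads off the rank two residues of $\Gamma$: the residue of a plane $t\in T$ (its incident points and lines) is a projective plane; the residue of a point $q\in Q$ (its incident lines and planes) is a generalized quadrangle; and the residue of a line $\rho\in R$ (its incident points and planes) is a generalized digon, so \emph{every} point on $\rho$ is incident to \emph{every} plane on $\rho$. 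Firmness of the chamber system $\mathscr{C}$ means every flag extends to a chamber; in particular every line carries at least two points and lies on at least one plane. Finally, $\mathrm{Sh}_Q(\rho)$ is precisely the set of points on the line $\rho$ and $\mathrm{Sh}_R(q)$ the set of lines on the point $q$, so the three displayed versions of the axiom are verbatim rewritings of ``two lines incident to two common points coincide'', using that the configuration formed by two distinct lines both incident to two distinct points is self-dual.

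\emph{The ``only if'' direction} is immediate from Tits' theory of spherical buildings: a building of type $\CC_3$ is the incidence geometry of a polar space of rank $3$, with $Q$ its point set, $R$ its set of singular lines, $T$ its set of maximal singular subspaces, and with two distinct points lying on a common line if and only if they are collinear, in which case that line is unique. So if two lines are incident to two common points $q\ne q'$, then $q,q'$ are collinear, their joining line is unique, and the two lines agree; (LL) holds.

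\emph{The ``if'' direction} carries the content. Assume $\Gamma$ connected and (LL). The plan is to recognize $(Q,R,T,*)$ as the incidence geometry of a polar space of rank $3$ and then invoke the classification of polar spaces of rank $\ge 3$ --- equivalently, of buildings of type $\CC_n$, $n\ge 3$ (Tits; Buekenhout--Shult) --- to conclude that $\Gamma$ is a building, hence simply connected and equal to its own universal Tits cover. By (LL) the point-line geometry $(Q,R)$ is a partial linear space, so we may speak of the unique line $\overline{ab}$ joining two collinear points $a\ne b$. The facts to be checked are: (i) every line has at least three points and there exist two non-collinear points (firmness, connectedness, and the existence of planes); (ii) the Buekenhout--Shult ``one or all'' axiom: for a point $p$ and a line $L$ not on $p$, the point $p$ is collinear with exactly one, or with all, of the points of $L$; and (iii) the polar rank is $3$ and the planes of $\Gamma$ are precisely the maximal singular subspaces --- this is essentially forced, since plane residues are projective planes, the digon residues of lines glue these projective planes together consistently, and connectedness rules out any larger or smaller maximal singular subspace.

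\emph{The main obstacle is (ii).} Suppose $p$ is collinear with two distinct points $a,b$ of $L$; one must produce a singular plane $\pi$ containing both $p$ and $L$, for then every point of $L$ is joined to $p$ inside the projective plane $\mathrm{Res}(\pi)$, using the digon property of the lines through $p$ to place $p$, $L$ and all points of $L$ into $\mathrm{Res}(\pi)$. Now $\overline{pa}$ and $\overline{pb}$ are two points of the generalized quadrangle $\mathrm{Res}(p)$, whose points are the lines on $p$ and whose lines are the planes on $p$. The crux is to show that $\overline{pa}$ and $\overline{pb}$ cannot be opposite in this quadrangle: if they were, a connectedness argument --- starting from a plane through $L$ and propagating through the digon residues of the lines on $a$ and on $b$ and the projective-plane residues encountered --- would produce two distinct lines joining $a$ and $b$, in violation of (LL). Hence they are collinear in $\mathrm{Res}(p)$, i.e.\ there is a plane $\pi$ on $p$ with $\overline{pa},\overline{pb}\subset\pi$; in $\mathrm{Res}(\pi)$ the unique line through $a$ and $b$ must, by (LL), coincide with $L$, so $L\subset\pi$ and (ii) follows. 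Converting the ``opposite'' alternative into an explicit breach of (LL) is the delicate combinatorial point, to be carried out by the gallery argument sketched by Tits on p.~544 of \cite{Ti2}; granted (i)--(iii), the classification of rank three polar spaces completes the proof. A second route, closer to Tits' original and avoiding polar spaces, is to use (LL) to construct directly, through any prescribed chamber, the three reflections generating a $\CC_3$ apartment and then verify the building axioms; the crux is the same --- (LL) is precisely the obstruction to amalgamating the rank two residues of $\mathscr{C}$.
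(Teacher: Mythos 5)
The paper does not prove Theorem~\ref{c3axiom}; it invokes it as a black box, citing Tits \cite{Ti2} (Proposition~9 together with the discussion of the $\CC_3$ case on p.~544). So there is no in-paper argument to compare against, and your task was really to reconstruct a proof of a result the authors merely quote.

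Your reconstruction has the right skeleton: the residue types are correctly read off (projective plane at $t$, generalized quadrangle at $q$, generalized digon at $r$), the ``only if'' direction via the polar-space model of a thick $\CC_3$ building is fine, and you correctly identify the Buekenhout--Shult ``one-or-all'' axiom as the pivotal point in the ``if'' direction. But you stop precisely at the step that carries the content of the theorem. Your argument for~(ii) reduces to the claim that if $\overline{pa}$ and $\overline{pb}$ were opposite in the quadrangle $\operatorname{Res}(p)$, then a connectedness/gallery argument ``would produce two distinct lines joining $a$ and $b$, in violation of (LL),'' and you then say explicitly that this ``delicate combinatorial point'' is ``to be carried out by the gallery argument sketched by Tits on p.~544.'' In other words, you defer exactly the part of the proof that is hard; without it, nothing forces $\overline{pa}$ and $\overline{pb}$ to be collinear in $\operatorname{Res}(p)$, and the ``one-or-all'' axiom is not established. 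As written, the proposal is therefore an accurate and well-organized summary of the structure of Tits' argument rather than a self-contained proof. Since the paper likewise rests on the citation, this is not a disagreement with the paper's treatment, but it is a genuine gap in the proposal qua proof.

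One smaller point: for the ``if'' direction you say you will ``invoke the classification of polar spaces of rank $\ge 3$'' to conclude that $\Gamma$ is a building. No classification is needed here: once the point--line geometry is shown to satisfy the polar-space axioms of rank $3$, the Veblen--Young/Tits correspondence already identifies its flag complex with a building of type $\CC_3$. Leaning on the classification is both heavier than necessary and slightly circular in spirit, since the theorem being proved is part of the combinatorial groundwork that feeds into that classification.
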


\medskip

 In our case, if $r \in R$ and $q \in Q$ are incident, (LL) is
clearly equivalent to

\medskip

$\bullet$ For any $r' \in \G_q(r), r'\ne r$, we have $\G_r(q) \cap
\G_{r'}(q) = q$

 \no or,

 $\bullet$ For any $q' \in \G_r(q), q'\ne q$, we have $\G_q(r) \cap \G_{q'}(r) = r$

\bigskip

We proceed to interpret (LL) in terms of the isotropy groups data.
This will be used either directly for $\mathscr{C}(M;\G)$ or
for a suitably constructed cover $\tilde{\mathscr{C}}(M;\G)$ as
described at the end of section \ref{prelim}. For notational simplicity we will describe it here only for
$\mathscr{C}(M;\G)$ (for the general case see remark \ref{lift crit} below).

\begin{prop} \label{property}
If $\mathscr{C}(M;\G)$ is a building of type $\CC_3$, then the
following holds:

$\star$  for any pair of different points $q$, $q'\in Q$ both
incident to an $r\in R$, we have
$$\G_{q}\cap \G_{q'}\subset \G_{rq}\cap \G_{rq'}$$
where $\G_{rq}$ denotes the isotropy group of the unique edge
between $r$ and $q$ \emph{(cf. Theorem C)}.
\end{prop}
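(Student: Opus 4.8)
The plan is to show that if $\mathscr{C}(M;\G)$ is a building, then the incidence-geometric axiom (LL) of \tref{c3axiom} forces the algebraic containment $\star$. The dictionary between the building and the polar data was already set up at the end of \sref{prelim}: a point $q$ is (the coset representing) a vertex of type $q$, i.e., an orbit $\G/\G_q$-point, similarly for lines $r$, and two vertices are incident exactly when the corresponding isotropy groups share an edge isotropy group, i.e. $\G_q$ and $\G_r$ are incident iff there is a chamber face isotropy $\G_{rq}$ with $\G_{rq}\subset \G_q\cap\G_r$. So the first step is purely translational: fix $r\in R$ and the two distinct points $q,q'\in Q$ incident to $r$, let $g\in\G$ be chosen so that $q' = g\cdot q$; then $\G_{q'} = g\G_q g^{-1}$ and, by incidence of $q'$ with $r$, there is an edge isotropy group $\G_{rq'}$ between $r$ and $q'$.

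Next I would take an element $h\in \G_q\cap\G_{q'}$ and argue that $h$ must fix the line $r$ (this is automatic since $h\in\G_q\cap\G_{q'}\subset\G_r$ is \emph{not} what one gets for free — rather one uses that $h$ stabilizes $q$, hence permutes the lines incident to $q$, and the real content is to pin down that $h$ stabilizes the \emph{edge} $rq$, not merely the unordered pair). The key input here is the building axiom in the form stated just after \tref{c3axiom}: for $r'\in\G_q(r)$ with $r'\ne r$ one has $\G_r(q)\cap\G_{r'}(q) = \{q\}$, i.e. a point and a line incident to it determine the edge between them uniquely among all lines through that point whose shadow meets. Concretely: since $h\in\G_q$, $h$ maps the edge $rq$ to an edge $h(rq)$ joining $h(r)$ and $q$; I want $h(r)=r$. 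Apply (LL): $h(r)$ and $r$ are both lines incident to the two distinct points $q$ and $h(q')$. But $h\in\G_{q'}$ means $h(q')=q'$, so $h(r)$ and $r$ are both incident to the distinct points $q,q'$; by (LL) they coincide, $h(r)=r$. Hence $h$ fixes both $r$ and $q$, so it fixes the edge $rq$ (the edge between $r$ and $q$ is unique by \tref{simplicial}, the geometry being simplicial), giving $h\in\G_{rq}$. Symmetrically, using $h\in\G_q$ this time to fix $q'$-side data and $h\in\G_{q'}$ to move, $h$ fixes the edge $rq'$, so $h\in\G_{rq'}$. Therefore $\G_q\cap\G_{q'}\subset\G_{rq}\cap\G_{rq'}$, which is exactly $\star$.

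The main obstacle I anticipate is the bookkeeping of \emph{which} edge is meant — i.e. being careful that "the unique edge between $r$ and $q$" is genuinely well-defined (guaranteed by simpliciality, \tref{simplicial}) and that an isometry/automorphism fixing two incident vertices of the simplicial geometry fixes the $1$-simplex they span (true in a simplicial complex once the $1$-skeleton has no multiple edges between a given pair of vertices, again simpliciality). A secondary subtlety is making sure the equivalence of the three bulleted forms of (LL) in \tref{c3axiom} is invoked in the right direction: I use the "point-line" form $\G_r(q)\cap\G_{r'}(q)=\{q\}$ to conclude $h(r)=r$, and one should double-check that $h(r)\in\G_q(r)$ (clear, as $h\in\G_q$) so the hypothesis of that form applies. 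Once these identifications are in place the argument is a short chase; no curvature or Lie-theoretic input is needed beyond what \sref{prelim} supplies, since $\star$ is a formal consequence of the building axiom under the simplicial (Tits geometry) hypothesis.
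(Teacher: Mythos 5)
Your argument is correct and follows the same strategy as the paper: you show that the action of $\G_q\cap\G_{q'}$ must fix $r$ by invoking (LL) (two lines incident to the same two distinct points coincide), and then identify $\G_q\cap\G_r$ with $\G_{rq}$ via simpliciality. The paper phrases the first step orbit-wise (the orbit $(\G_q\cap\G_{q'})(r)$ is a singleton) rather than element-wise, but this is the same reasoning.
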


\begin{proof} Note that
every line in the orbit $\G_{q}\cap \G_{q'}(r)$ is incident to both
$q$ and $q'$. Axiom (LL) implies that the orbit contains only one
line, $r$ and hence $\G_{q}\cap \G_{q'}\subset \G_r$. Since
$\mathscr{C}(M;\G)$ is a building,  we have $\G_r\cap \G_q=\G_{rq}$
and $\G_r\cap \G_{q'}=\G_{rq'}$. The desired result follows.
\end{proof}

We will see that the condition $\star$ together with an assumption
on a suitable \emph{reduction} of the $\G$ action implies that
$\mathscr{C}(M;\G)$ is a building of type $\CC_3$.

To describe the reduction, let $r\in R$ be a line, and let $\Sph
^\perp _{r, Q}$ be the normal sphere in the summand  in the slice
$T^\perp_r$. Then the shadow of $r$ in $Q$ is $\exp (\frac{\pi}{4}
\Sph ^\perp _{r, Q})$.  Moreover, the isotropy group $\G_r$ acts
transitively on $\Sph ^\perp _{r, Q}$.
\begin{center}Let $\K_{r,Q}$ denote the identity component of the kernel of the
transitive $\G_r$ action on $\Sph ^\perp _{r, Q}$.\end{center}
 It is clear that
the fixed point connected component $M^{\K_{r,Q}}$ (containing $r$)
is a cohomogeneity one $\N_0(\K_{r,Q})$ submanifold of $M$, where 
$\N_0(\K_{r,Q})$ is the
identity component of the normalizer $\N(\K_{r,Q})$ of $\K_{r,Q}$ in $\G$. The
corresponding chamber system denoted $\mathscr{C}(M^{\K_{r,Q}})$ is
a subcomplex of $\mathscr{C}(M) := \mathscr{C}(M;\G)$ that inherits
an incidence structure, which gives rise to  a Tits geometry of rank
$2$.

\begin{lem}[Reduction] \label{Criterion1}
The connected chamber system $\mathscr{C}(M,\G)$ of type $\CC_3$ is
a building if for any $r \in R$, the reduction
$\mathscr{C}(M^{\K_{r,Q}})$ is a $\CC_2$-building and $\star$ holds.
\end{lem}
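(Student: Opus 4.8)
The goal is to verify the $\CC_3$ Axiom (LL) of \tref{c3axiom}, i.e.\ to show that two distinct points $q,q'\in Q$ cannot both be incident to two distinct lines $r,r'\in R$. The plan is to argue by contradiction: suppose $q,q'$ are both incident to $r$ and to $r'$ with $r\ne r'$. First I would translate this into isotropy-group language exactly as done in the discussion following \pref{property}: it means that in the orbit $\G_q\cap\G_{q'}(r)$ of lines (each incident to both $q$ and $q'$) there is a second element $r'\ne r$. So it suffices to prove that the orbit $\G_q\cap\G_{q'}(r)$ is a single point, equivalently that $\G_q\cap\G_{q'}\subseteq\G_r$; combined with the hypothesis $\star$ and the fact that in a building $\G_r\cap\G_q=\G_{rq}$, this is precisely what forces the two lines to coincide.

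\textbf{Key steps.} The heart of the matter is to relate the global incidence question to the rank-two reduction at a line. Fix a line $r\in R$ with normal sphere $\Sph^\perp_{r,Q}$ in the appropriate slice summand, so that $\mathrm{Sh}_Q(r)=\exp(\tfrac{\pi}{4}\Sph^\perp_{r,Q})$ and $\G_r$ acts transitively on $\Sph^\perp_{r,Q}$ with ineffective kernel having identity component $\K_{r,Q}$. The fixed-point component $M^{\K_{r,Q}}$ containing $r$ is a cohomogeneity-one $\N_0(\K_{r,Q})$-manifold whose chamber system $\mathscr{C}(M^{\K_{r,Q}})$ is, by hypothesis, a $\CC_2$-building. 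Now take any two distinct points $q,q'$ incident to $r$. Then $q,q'\in\mathrm{Sh}_Q(r)$, so both lie in $M^{\K_{r,Q}}$, and the lines in $\mathscr{C}(M)$ incident to $q$ which are also incident to some point of $M^{\K_{r,Q}}$ close to $r$ are exactly the lines of the reduced geometry $\mathscr{C}(M^{\K_{r,Q}})$. The step I expect to need here is that the reduced geometry $\mathscr{C}(M^{\K_{r,Q}})$ being a (thick, or at worst the right kind of) $\CC_2$-building guarantees that in it, a point $q$ and a line $r$ that are incident determine a unique "flag'', so that inside $M^{\K_{r,Q}}$ the line through $q$ incident to $r$ is uniquely $r$ itself; equivalently $\G_q\cap\G_{q'}$, which acts on the set of lines incident to both and fixing the relevant normal data of $r$, must fix $r$. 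Thus $\G_q\cap\G_{q'}\subseteq\G_r$, and likewise $\subseteq\G_{r'}$ for the hypothetical second line. Then $\star$ gives $\G_q\cap\G_{q'}\subseteq\G_{rq}\cap\G_{rq'}=\G_{r'q}\cap\G_{r'q'}$, and one checks that the edge data force $r=r'$ — i.e.\ the panel of $\mathscr{C}(M)$ determined by the edge $rq$ and the point $q'$ (or by the reduction) contains only one chamber with the prescribed $q,q'$ faces, contradicting $r\ne r'$.

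\textbf{Assembling and the main obstacle.} Having established (LL) for all such configurations, \tref{c3axiom} immediately yields that the connected Tits geometry $\mathscr{C}(M,\G)$ of type $\CC_3$ is a building, which is the assertion of the Reduction Lemma. The main obstacle, I expect, is the passage between the rank-three incidence structure and the rank-two reduction: one must show carefully that every line incident to a point $q\in\mathrm{Sh}_Q(r)$ and to $r$ actually lies in the subgeometry $\mathscr{C}(M^{\K_{r,Q}})$ — that is, that the "locality'' at $r$ captured by $\K_{r,Q}$ is enough to reduce the global (LL) to the reduced (LL) — and conversely that the $\CC_2$-building property of the reduction is genuinely applicable (it must be connected of the right type, and the relevant residue must be thick enough for the uniqueness-of-flags statement to bite). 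The role of $\star$ is then precisely to upgrade "$\G_q\cap\G_{q'}$ fixes $r$'' to the equality of edge-isotropy groups that rigidly pins down the chamber, so that no second line $r'$ can exist. Once these two linkages are in place, the argument closes with no further curvature input beyond what is already encoded in the reduction being a building.
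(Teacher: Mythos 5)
Your overall plan---argue by contradiction via axiom (LL), and bring the hypotheses on $\star$ and on the reduction $\mathscr{C}(M^{\K_{r,Q}})$ to bear on a putative offending configuration $q\ne q'$, $r\ne r'$---matches the paper's strategy. But there is a genuine gap in how you deploy the two hypotheses, and the step you flag as the ``main obstacle'' (getting the global configuration into the local reduction) is never actually bridged.

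The paper's proof does not first try to prove $\G_q\cap\G_{q'}\subseteq\G_r$ and then invoke $\star$ afterward, as you propose; it uses $\star$ directly and uses the reduction only at the very end, via a girth argument. Concretely: applying $\star$ to the pair $(q,q')$ with the line $r$, and again with the line $r'$, gives
\[
\G_q\cap\G_{q'}\subset \G_{rq}\cap\G_{rq'}
\quad\text{and}\quad
\G_q\cap\G_{q'}\subset \G_{r'q}\cap\G_{r'q'},
\]
so the entire length-$4$ circuit $\{rq,\,rq',\,r'q,\,r'q'\}$ lies in the fixed-point set $M^{\G_q\cap\G_{q'}}$. The observation you are missing is then the trivial inclusion $\K_{r,Q}\subset\G_q\cap\G_{q'}$ (because $\K_{r,Q}$ fixes the whole shadow sphere $\Sph^\perp_{r,Q}$, hence fixes both $q$ and $q'$), which forces $M^{\G_q\cap\G_{q'}}\subset M^{\K_{r,Q}}$ and thus puts the whole $4$-circuit, in particular the second line $r'$, inside the reduction. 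The contradiction then comes from the fact that a $\CC_2$ building is a generalized quadrangle, whose incidence graph has girth $8$: it contains no $4$-circuit.

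Your proposal instead tries to first establish $\G_q\cap\G_{q'}\subseteq\G_r$ by appealing to a ``uniqueness of flags'' property of the $\CC_2$ reduction, and then asserts that ``one checks that the edge data force $r=r'$.'' Neither step is substantiated, and the first cannot be carried out without already knowing that $r'$ lies in $M^{\K_{r,Q}}$---a fact you never establish and that, as explained above, comes only from applying $\star$ to $r'$ itself. You also say that ``the lines in $\mathscr{C}(M)$ incident to $q$ which are also incident to some point of $M^{\K_{r,Q}}$ close to $r$ are exactly the lines of the reduced geometry,'' which is not correct as stated; incidence to a point of $M^{\K_{r,Q}}$ does not by itself put a line in the reduction. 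The actual mechanism is fixed-point containment through the chain $\K_{r,Q}\subset\G_q\cap\G_{q'}$, and without it the argument does not close.
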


\begin{proof}
If not, by Axiom (LL) there are two points $q\ne q'\in Q$ which are
both incident to two different lines $r, r'\in R$. By $\star$ we
know that $\G_{q}\cap \G_{q'}\subset \G_{rq}\cap \G_{rq'}$ and
$\G_{q}\cap \G_{q'}\subset \G_{r'q}\cap \G_{r'q'}$. Therefore, the
configuration $\{rq, rq', r'q, r'q'\}$ is contained in the fixed
point set $M^{\G_q\cap\G_{q'}}$. Since by definition clearly
$\K_{r,Q} $ is a subgroup of $\G_{q}\cap \G_{q'}$, we have that
$M^{\G_q\cap\G_{q'}} \subset M^{\K_{r,Q}}$. This implies that there
is a length $4$ circuit in the $\CC_2$ building
$\mathscr{C}(M^{\K_{r,Q}})$. A contradiction.
\end{proof}

The following technical criterion will be more useful to us:

\begin{lem}[$\CC_3$ Building Criterion] \label{Criterion2}
The connected chamber system $\mathscr{C}(M,\G)$ is
a building if for any $r \in R$, the reduction
$\mathscr{C}(M^{\K_{r,Q}})$ is a $\CC_2$-building and the following
\emph{Property (P)} holds:

\emph{(P)} For any $q\in \emph{Sh}_Q(r)$, and any Lie group $\L$ with
$\K_{r,Q}\subset \L\subset \G_q$ but $\L \nsubset \G_{rq}$, the
normalizer $\N(\K_{r,Q})\cap \L$ is not contained in $\G_{rq}$
either.
\end{lem}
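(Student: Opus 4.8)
The plan is to deduce Lemma \ref{Criterion2} from Lemma \ref{Criterion1}; that is, to show that the conjunction of \emph{Property (P)} and the hypothesis that $\mathscr{C}(M^{\K_{r,Q}})$ is a $\CC_2$-building implies the abstract incidence condition $\star$ of Proposition \ref{property}, so that Lemma \ref{Criterion1} applies verbatim. Since Lemma \ref{Criterion1} already reduces being a building to ``$\star$ holds'' (given the reductions are $\CC_2$-buildings), the only thing left to establish is $\star$: for any two distinct points $q,q'\in Q$ both incident to a line $r\in R$, one has $\G_q\cap\G_{q'}\subset \G_{rq}\cap\G_{rq'}$. By symmetry in $q,q'$ it suffices to show $\G_q\cap\G_{q'}\subset\G_{rq}$ (the inclusion into $\G_{rq'}$ is the same argument with the roles of $q$ and $q'$ interchanged, using incidence of $q'$ with $r$).

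So fix distinct $q,q'\in\text{Sh}_Q(r)$ and set $\L := (\G_q\cap\G_{q'})\cdot\K_{r,Q}$, or more carefully the subgroup generated by $\G_q\cap\G_{q'}$ and $\K_{r,Q}$. First I would check that $\K_{r,Q}\subset\G_q\cap\G_{q'}$: indeed $q,q'$ both lie in $\text{Sh}_Q(r)=\exp(\tfrac{\pi}{4}\Sph^\perp_{r,Q})$, on which $\G_r$ acts transitively with kernel-identity-component $\K_{r,Q}$, so $\K_{r,Q}$ fixes every point of $\text{Sh}_Q(r)$, in particular $q$ and $q'$; hence $\K_{r,Q}\subset\G_q\cap\G_{q'}$ and therefore $\K_{r,Q}\subset\L\subset\G_q$. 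Next, suppose for contradiction that $\G_q\cap\G_{q'}\not\subset\G_{rq}$; then $\L\not\subset\G_{rq}$ either (since $\G_q\cap\G_{q'}\subset\L$). Now \emph{Property (P)} applies with this $\L$ and yields $\N(\K_{r,Q})\cap\L\not\subset\G_{rq}$, so I may pick $g\in\N(\K_{r,Q})\cap\L$ with $g\notin\G_{rq}$. The key point is that such a $g$ moves $r$: since $\G_r\cap\G_q=\G_{rq}$ fails only when $\mathscr{C}$ is not yet known to be a building — here I instead argue directly. Because $g\in\L\subset\G_q$, the line $g(r)$ is again incident to $q$; because $g\in\G_{q'}$ (as $\L\subset\G_{q'}$ by the same construction — one should set $\L$ symmetrically, $\L:=\gen{\G_q\cap\G_{q'},\K_{r,Q}}$, which lies in $\G_q\cap\G_{q'}$ up to the $\K_{r,Q}$ factor, and $\K_{r,Q}\subset\G_q\cap\G_{q'}$, so in fact $\L=\G_q\cap\G_{q'}$), the line $g(r)$ is incident to $q'$ as well. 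Moreover $g\in\N(\K_{r,Q})$ guarantees that $g(r)$ is fixed by $\K_{r,Q}$, hence $g(r)$ and all edges $g(rq),g(rq')$ lie in $M^{\K_{r,Q}}$, and together with $r,rq,rq'$ this produces a configuration in the rank-$2$ geometry $\mathscr{C}(M^{\K_{r,Q}})$.

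The crux is then to manufacture the forbidden $4$-circuit in the $\CC_2$-building $\mathscr{C}(M^{\K_{r,Q}})$: the vertices $r, q, g(r), q'$ form a closed gallery of combinatorial length $4$ in the incidence graph of $\mathscr{C}(M^{\K_{r,Q}})$ — $r$ incident to $q$, $q$ incident to $g(r)$, $g(r)$ incident to $q'$, $q'$ incident to $r$ — and $g\notin\G_{rq}$ forces $g(r)\ne r$ while $q\ne q'$ by hypothesis, so this circuit is non-degenerate. A $\CC_2$-building is a generalized quadrangle, which contains no circuit of length $4$ in its point-line incidence graph (its girth is $8$); this is the contradiction. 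Consequently $\G_q\cap\G_{q'}\subset\G_{rq}$, and by symmetry $\star$ holds, so Lemma \ref{Criterion1} finishes the proof.

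I expect the main obstacle to be the bookkeeping around \emph{which} group plays the role of $\L$ and verifying that the element $g$ produced by Property (P) genuinely lies in all three groups $\G_q$, $\G_{q'}$, and $\N(\K_{r,Q})$ simultaneously while still failing to lie in $\G_{rq}$ — i.e., making the logical quantifier structure of (P) line up exactly with the configuration being built. A secondary subtlety is checking that $g(r)$ is really a \emph{line} of the subgeometry $\mathscr{C}(M^{\K_{r,Q}})$ and that incidence in the subgeometry is the restriction of incidence in $\mathscr{C}(M)$, so that ``$4$-circuit in $\mathscr{C}(M^{\K_{r,Q}})$'' is a legitimate statement; this is where the remark that $\mathscr{C}(M^{\K_{r,Q}})$ ``inherits an incidence structure'' giving a rank-$2$ Tits geometry, established just before Lemma \ref{Criterion1}, is used. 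Everything else is a direct transcription of the proof of Lemma \ref{Criterion1}.
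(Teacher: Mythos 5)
Your overall strategy matches the paper's proof exactly: invoke Lemma~\ref{Criterion1} to reduce to verifying~$\star$, suppose~$\star$ fails for some $r$ and distinct $q,q'\in\text{Sh}_Q(r)$, apply Property~(P) to $\L=\G_q\cap\G_{q'}$ to obtain an element $\alpha\in\N(\K_{r,Q})\cap\L$ with $\alpha\notin\G_{rq}$, and manufacture a length-$4$ circuit in the $\CC_2$-building $\mathscr{C}(M^{\K_{r,Q}})$. The detour through $\gen{\G_q\cap\G_{q'},\K_{r,Q}}$ before you notice $\K_{r,Q}\subset\G_q\cap\G_{q'}$ is harmless bookkeeping and your explicit verification of that containment is a useful remark the paper leaves implicit.

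However, there is a genuine gap at the step where you need the circuit to be nondegenerate. You assert that ``$g\notin\G_{rq}$ forces $g(r)\ne r$,'' but that implication is false as stated: nothing rules out $g\in\G_r\setminus\G_{rq}$, i.e., $g$ fixing the line $r$ while moving the chamber $rq$. You even flag the issue yourself --- noting that one cannot use $\G_r\cap\G_q=\G_{rq}$ because the full geometry is not yet known to be a building, and promising to ``argue directly'' --- but then what you actually argue (that $g(r)$ is incident to $q$ and $q'$ and lies in $M^{\K_{r,Q}}$) does not address whether $g(r)=r$. The missing observation, which is exactly what the paper supplies, is that one \emph{can} use the building property of the reduction rather than of the full geometry: since $\mathscr{C}(M^{\K_{r,Q}})$ is a $\CC_2$-building, one has
\[
\G_r\cap\G_q\cap\N(\K_{r,Q})=\G_{rq}\cap\N(\K_{r,Q}).
\]
Since $\alpha\in\G_q\cap\N(\K_{r,Q})$ and $\alpha\notin\G_{rq}$, this identity forces $\alpha\notin\G_r$, hence $\alpha(r)\ne r$, and the circuit $\{rq,\,q\alpha(r),\,\alpha(r)q',\,q'r\}$ is nondegenerate. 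Insert this line and your proof closes up; it is then the same argument as the paper's.
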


\begin{proof}
By the previous lemma it suffices to verify $\star$. Suppose $\star$
is not true. Then there is an $r\in R$ and a pair of points $q\ne
q'$ both incident to $r$ such that $\G_q\cap \G_{q'}$ is not a
subgroup of $\G_{rq}$. Let $\L=\G_q\cap \G_{q'}$. By Assumption (P),
there is an $\alpha \in \N(\K_{r,Q})\cap \L$ so that $\alpha\notin
\G_{rq}$. However, $\G_r\cap \G_q\cap  \N(\K_{r,Q})=\G_{rq} \cap
\N(\K_{r,Q})$ since $M^{\K_{r,Q}}$ is an $\CC_2$ building. In
particular, $\alpha \notin \G_r$, and so there is a length $4$
circuit  $\{rq, q\alpha(r), \alpha(r)q', q'r\}$ in the $\CC_2$
building $\mathscr{C}(M^{\K_{r,Q}})$. A contradiction.
\end{proof}

\begin{rem}\label{lift crit}
For an $\S^1$ cover $\tilde {\mathscr{C}} : =
\mathscr{C}(P,\S^1\times \G)$ of $\mathscr{C}(M,\G)$ constructed as
above note that the property $\star$ is inherited from $(M, \G)$.
Likewise, the group $\hat \K$ being the graph of the homomorphism
$\G_{\ell_t} \subset \G_r$ to $\S^1$ restricted to $\K: = \K_{r,Q}$
satisfies Property (P) when $\K$ does.  For this note that by
construction the local data for the reduction $P^{\hat \K}$ are
isomorphic to the local data for $M^{\K}$. It then follows as in the
proofs above, that if a component of the reduction
$\mathscr{C}(P^{\hat \K}) \subset \mathscr{C}(P,\S^1\times \G)$ is a
$\CC_2$-building, then the corresponding component of
 $\tilde{\mathscr{C}}$ will be a $\CC_3$ building covering
$\mathscr{C}(M,\G)$, and our main result, Theorem 4.10, from the \cite{FGT} applies.
\end{rem}

\begin{rem}\label{rank3crit}
If $\K' = \K' _{r,Q}\subset \K_{r, Q}=\K$ is a subgroup, then the
assumption of $\mathscr{C}(M^{\K})$ being a $\CC_2$ building in the
above criterion may be replaced by, the fixed point component
${\mathscr{C}}(M^{\K'})\supset {\mathscr{C}}(M^{\K})$ being a
$\CC_2$ building, or a rank $3$ building. For the latter,  we notice
that, by Charney-Lytchak [CL] Theorem 2, a rank $3$ spherical building is a
CAT(1) space, hence any two points of distance less than $\pi$ are
joined by a unique geodesic. This clearly excludes a length $4$
circuit in the above proof, since its perimeter is $\pi$.
\end{rem}

\begin{rem}\label{edge ker}
Note that clearly $\K_t  \subset \K_{\ell_q} \cap \K_{\ell_r}$ and similarly for the other kernels of vertex and edge isotropy groups. In particular, for the identity component $\K'$ of $\K_t$ we have $\K' \subset \K$, where $\K$ ($=\K_{r,Q}$) is the identity component of the kernel of $\G_{r}$ acting on $\Sph^d$. Consequently, the reduction $M^{\K'}$ is a cohomogeneity two manifold of type either $\A_3$, or $\CC_3$ containing the cohomogeneity one manifold $M^{\K}$ (cf. \ref{rank3crit} above).
\end{rem}

\section{Classification outline and organization}

The subsequent sections are devoted to a proof of the following main result of the paper:

\begin{thm} \label{thmC3}
Let $M$ be a compact, simply connected positively curved  polar
$\G$-manifold with associated chamber system $\mathscr{C}(M;\G)$ of
type $\CC_3$. Then the universal cover $\tilde{\mathscr{C}}$  of
$\mathscr{C}(M;\G)$ is a building if and only if $(M,\G)$ is not
equivariantly diffeomorphic to one of the exceptional polar actions on
$\Bbb O\Bbb {P}^2$ by $\G = \SU(3)\cdot \SU(3)$ or $\G = \SO(3)\cdot \G_2$.\end{thm}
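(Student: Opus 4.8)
The strategy is a case-by-case analysis organized by the multiplicity triple $(d,d,k)$, where $d\in\{1,2,4,8\}$, using the building criteria of Section~\ref{axiom} in all but two cases. The first task is to narrow down the admissible local data. Because the slice representation of $\G_t$ is of type $\A_2$, Remark~\ref{t-rem} pins down $\bar\G_t$ (locally $\SO(3),\SU(3),\Sp(3),\F_4$ for $d=1,2,4,8$), and the Slice Kernel Lemma~\ref{effectiveC3} together with the Rank Lemma~\ref{rank}, the Transversality Lemma~\ref{transv}, Wilking's Connectivity Lemma~\ref{connect} and the Sphere Transitive Subrepresentation Lemma~\ref{WilkC3} will severely constrain $\G_q$, $\G_r$, the principal isotropy $\H$ and their mutual inclusions. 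I expect this bookkeeping to reduce the problem to a short list of candidate local data for each value of $d$, with the genuinely rich cases being small $k$ (the $(1,1,k)$, $(2,2,k)$ and $(4,4,k)$ families) and $d=8$ forcing the Cayley plane essentially immediately.

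\textbf{Verifying the building axiom.} For each surviving candidate, I would produce (if necessary) an $\S^1$-cover $\widehat{\mathscr{C}}(M;\G)=\mathscr{C}(P,\S^1\times\G)$ as in Section~\ref{prelim}, choosing the defining homomorphism on the edge isotropy group so that the reduction $P^{\hat\K}$ behaves well, and then verify the $\CC_3$ Building Criterion~\ref{Criterion2}: namely that the rank-two reduction $\mathscr{C}(M^{\K_{r,Q}})$ (or the refined versions in Remarks~\ref{rank3crit} and~\ref{edge ker}) is a $\CC_2$-building, and that Property~(P) holds. The $\CC_2$-building condition on the cohomogeneity-one reduction $M^{\K_{r,Q}}$ is checkable from the classification of cohomogeneity-one manifolds / the $\A_2$- and $\CC_2$-geometry dictionary, while Property~(P) is a concrete statement about the inclusions $\K_{r,Q}\subset\L\subset\G_q$ relative to the edge group $\G_{rq}$ and the normalizer $\N(\K_{r,Q})$ — it should follow in each case by identifying $\K_{r,Q}$, its normalizer, and the finitely many possible intermediate groups $\L$ explicitly. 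When this succeeds, Remark~\ref{lift crit} and Theorem~4.10 of~\cite{FGT} give that the relevant cover is a building and hence (being simply connected) equals $\tilde{\mathscr{C}}$.

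\textbf{The two exceptional cases.} There will be exactly two configurations of local data — the ones arising in~\ref{lemC3d=2,3b} (with $\G$ locally $\SU(3)\cdot\SU(3)$) and in~\ref{(1,1,5)} (with $\G$ locally $\SO(3)\cdot\G_2$) — for which the cover construction and Property~(P) fail, i.e.\ Axiom~(LL) genuinely does not hold and $\tilde{\mathscr{C}}$ is not a building. Here the argument reverses: one shows these local data are realized, that the groups in question are maximal connected subgroups of $\F_4=\Iso_0(\Bbb O\Bbb P^2)$, and invokes the Reconstruction Theorem of~\cite{GZ} to conclude that $(M,\G)$ must be equivariantly diffeomorphic to the corresponding known polar $\CC_3$-actions on the Cayley plane (cf.~\cite{PTh,GK}). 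Conversely, for these two actions one checks directly that $\mathscr{C}(\Bbb O\Bbb P^2;\G)$ is a $\CC_3$ Tits geometry whose universal cover is not a building (as already noted in the literature, \cite{Ne,FGT,Ly,KL}). Combining the two directions yields the stated equivalence.

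\textbf{Main obstacle.} The hard part will be the exceptional cases: ruling out that the ``bad'' local data admit any cover making Property~(P) hold, and then proving the rigidity statement that these local data force the manifold to be $\Bbb O\Bbb P^2$ with the specific action — i.e.\ that the partial polar data in~\ref{lemC3d=2,3b} and~\ref{(1,1,5)} already determine $(M,\G)$ up to equivariant diffeomorphism. This requires knowing enough of the data to apply the Reconstruction Theorem and identifying the ambient subgroup of $\F_4$; the maximality remark is what makes the action unique once the group is known. The rest — the long list of non-exceptional cases — is laborious but, given the machinery of Sections~\ref{prelim}--\ref{axiom}, essentially mechanical.
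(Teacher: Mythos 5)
Your outline mirrors the paper's proof: the same case-by-case analysis organized by slice representation type (Tables~\ref{t-rep}, \ref{q-rep}), the same use of the Slice Kernel, Rank, Transversality, Connectivity and Sphere Transitive Lemmas to pin down the local data, the same verification of the $\CC_3$ Building Criterion~\ref{Criterion2} via cohomogeneity-one reductions and $\S^1$-covers when needed, and the same recognition of the two exceptional configurations in~\ref{lemC3d=2,3b} and~\ref{(1,1,5)} as Cayley plane actions by maximality of $\SU(3)\cdot\SU(3)$ and $\SO(3)\cdot\G_2$ in $\F_4$ together with the Reconstruction Theorem. Two small corrections: $d=8$ does not force the Cayley plane --- the $(8,8,1)$ flip case models on the isotropy representation of $\E_7/\E_6\cdot\S^1$, and the two Cayley exceptions occur at multiplicities $(2,2,3)$ and $(1,1,5)$, which you later cite correctly --- and the paper does not ``rule out that any cover makes Property~(P) hold'' for the exceptional cases; it instead case-splits directly on whether the reduction $M^{\K'}$ is $\Bbb{HP}^2$ (for $(2,2,3)$, Propositions~\ref{lemC3d=2,3a} and~\ref{lemC3d=2,3b}) or on $\rank(\G)$ (for $(1,1,5)$, Proposition~\ref{(1,1,5)}), which cleanly separates the building outcome from the Cayley plane outcome.
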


This combined with the main result of \cite{FGT} proves Theorem B in the introduction.

The purpose of this section is to describe how the proof is organized according to four types of scenarios driven by the possible compatible types of slice representations for $\G_t$ and $\G_q$ at the vertices $t$ and $q$ of a chamber $C$.

The common feature in each scenario and all cases is the determination of all local data. The basic input for this is indeed knowledge of the slice representations
at the vertices $t$ and $q$ of a chamber $C$, and Lemma
\ref{effectiveC3}. The local data identifies the desired $\K \subset
\G_r$ reduction $M^{\K}$ with its cohomogeneity one action by
$\N(\K)$ referred to in the Building Criteria Lemma
\ref{Criterion2}, with Property (P) being essentially automatic.
 The main difficulty is to establish that $\mathscr{C}(M^{\K}) \subset \mathscr{C}(M;\G)$ or the corresponding reduction in a cover
 (which by construction has the same local data) is a $\CC_2$ building. The first step for this frequently uses the following consequence of the
 classification work on positively curved cohomogeneity one manifolds in \cite{GWZ} and \cite{Ve}.

\begin{lem}\label{GWZ} Any simply connected positively curved cohomogeneity one manifold with multiplicity pair different from $(1, 1)$, $(1, 3)$ and $(1, 7)$ is equivariantly diffeomorphic to a rank one symmetric space.
\end{lem}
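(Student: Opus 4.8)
The plan is to reduce everything to the classification of positively curved cohomogeneity one manifolds. The starting point is the Reconstruction Theorem of \cite{GZ}: a cohomogeneity one action is determined by its group diagram $\K^- \supset \Ho \subset \K^+$ (or equivalently its local data), so it suffices to show that the local data of such an $M$ coincides with that of a cohomogeneity one action on a rank one symmetric space. The work in \cite{GWZ} together with \cite{Ve} provides a complete classification of the possible group diagrams of simply connected positively curved cohomogeneity one manifolds; the only families that are not (equivariantly diffeomorphic to) actions on rank one symmetric spaces occur precisely in dimensions $7$ and $13$, where the exotic examples (the Bazaikin spaces, the Aloff--Wallach-type and Eschenburg-type cohomogeneity one manifolds, and the exceptional $\mathsf{P}$-family) live.

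The key observation is that all of those exceptional manifolds have a distinguished invariant: the pair $(\mathsf{l}^-, \mathsf{l}^+)$ of dimensions of the normal spheres to the two singular orbits — equivalently, the multiplicities of the two singular orbits — which for them is forced to be $(1,1)$, $(1,3)$, or $(1,7)$. So first I would recall from \cite{GWZ}, \cite{Ve} the list of simply connected positively curved cohomogeneity one manifolds, organized by multiplicity pair. Then I would go down the list of multiplicity pairs \emph{other than} $(1,1)$, $(1,3)$, $(1,7)$ and check case by case that every entry appearing is a rank one symmetric space with its (known) cohomogeneity one action: the linear $\mathsf{SO}(n)$ or $\mathsf{SU}(n)$ or $\mathsf{Sp}(n)$ actions on spheres, the isotropy-type actions on $\CP^n$, $\HP^n$, $\Bbb{OP}^2$, and so on. Since the classification in \cite{GWZ} is stated in precisely this form, this step is essentially a matter of reading off the tables and invoking the Reconstruction Theorem to upgrade "same group diagram" to "equivariantly diffeomorphic."

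The main obstacle is not conceptual but bookkeeping: one has to be certain that the classification being cited is genuinely complete for all multiplicity pairs outside the three listed exceptions, i.e.\ that \cite{GWZ} together with the supplementary analysis in \cite{Ve} leaves no gaps (the low-cohomogeneity and low-dimensional sporadic cases in that program were the delicate ones), and that in each such case the manifold together with its action — not merely its diffeomorphism type — is standard. In particular one should be slightly careful that two a priori different group diagrams can yield equivariantly diffeomorphic actions, so the statement "equivariantly diffeomorphic to a rank one symmetric space" must be read as: there exists an equivariant diffeomorphism to some rank one symmetric space equipped with \emph{some} cohomogeneity one action (necessarily polar, being a $\K$-orbit-type action), which is exactly what the cited classification delivers. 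Granting the classification, the lemma follows immediately; the content of the lemma is simply the extraction of the multiplicity-pair dichotomy from that classification for use in the reduction step of the $\CC_3$ building criterion, Lemma~\ref{Criterion2}.
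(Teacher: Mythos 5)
Your proposal is correct and matches the paper's approach exactly: the paper offers no proof of this lemma, presenting it as a direct consequence of the classification of simply connected positively curved cohomogeneity one manifolds in \cite{GWZ} (odd dimensions) and \cite{Ve} (even dimensions). Your argument — scan the tables, observe that every candidate whose multiplicity pair lies outside $\{(1,1),(1,3),(1,7)\}$ is a cohomogeneity one action on a rank one symmetric space, and invoke the Reconstruction Theorem of \cite{GZ} to upgrade equality of group diagrams to equivariant diffeomorphism — is precisely the intended justification.
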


As already pointed out and used,   there are only four
possible (effective) slice representations at $t$, in particular forcing the
codimensions of the orbit strata corresponding to $\ell_q, \ell_r,$
and  $\ell_t$ to be $d+1, d+1$ and $k+1$, where $d=1, 2, 4$ or $8$.
In Table \ref{t-rep}, $\L^{\pm}$, respectively $\H$ are the singular, respectively principal isotropy groups for the effective slice representation, $\chi$ by $\bar \G_t$ restricted to the unit sphere, and $l_{\pm}+1$ are the codimensions of the singular orbits.

{\setlength{\tabcolsep}{0.10cm}
\renewcommand{\arraystretch}{1.6}
\stepcounter{equation}
\begin{table}[!h]
      \begin{center}
       \begin{tabular}{|c||c|c|c|c|c|c|c|}
\hline
    $n$     &$\bar \G_t$ &$\chi$   &$\L^{-}$    &$\L^{+}$     &$\H$& $(l_-,l_+)$ & $W$   \\
\hline \hline

$4$ & $\SO(3)$ &  &$\S(\O(2)\O(1))$ & $\S(\O(1)\O(2))$ &
$\Z_2\oplus\Z_2$ & $( 1,1 )$ & $ \A_2 $\\
\hline

$7$ & $\PSU(3)$ & $\text{Ad}$ &
$\S(\U(2)\U(1))/\Delta(\Z_3)$ & $\S(\U(1)\U(2))/\Delta(\Z_3)$ & $\T^2/\Z_3$& $( 2,2 )$ & $\A_2$ \\
\hline

$13$ & $\Sp(3)/\Delta (\Z_2)$ & $\psi_{14}$ &
$\Sp(2)\Sp(1)/\Delta(\Z_2)$ & $\Sp(1)\Sp(2)/\Delta(\Z_2)$ & $\Sp(1)^3/\Delta(\Z_2)$& $( 4,4 )$ & $\A_2$ \\
\hline

$25$ & $\F_4$ & $\psi_{26}$ &
$\Spin(9)$ & $\Spin(9)$ & $\Spin(8)$& $(8 ,8 )$ & $\A_2$ \\

 \hline

          \end{tabular}
      \end{center}
      \vspace{0.1cm}
      \caption{Effective $t$-slice representations on $\Sph^{\perp}_t = \Sph^n$}\label{t-rep}
\end{table}}

Similarly  (see Table \ref{q-rep}), the identity component $ (\bar \G_q)_0$ of possible effective $\CC_2$ type slice representations
at $q$ which are compatible with the multiplicity restrictions in Table \ref{t-rep} are
known as well (see e.g. Table E of \cite{GWZ} in which we have
corrected an error for the exceptional $\SO(2)\Spin(7)$ representation (see also \cite{GKK} (Main Theorem)).

{\setlength{\tabcolsep}{0.04cm}
\renewcommand{\arraystretch}{1.4}
\stepcounter{equation}
\begin{table}[!h]
      \begin{center}
          \begin{tabular}{|c||c|c|c|c|c|c|c|}
\hline
    $n$     &$ (\bar \G_q)_0$ &$\chi$   &$\L^{-}$    &$\L^{+}$     &$\H$& $(l_-,l_+)$ & $W$   \\
\hline \hline

$8k+15$, $k\ge 0$ & $\frac{\Sp(2)\Sp(k+2)}{\Delta(\Bbb Z_2)}$ &
$\nu_2\hat{\otimes}\nu_{k+2}$ & $\frac{\Sp(2)\Sp(k)}{\Delta(\Bbb
Z_2)}$ & $\frac{\Sp(1)^2\Sp(k+1)}{\Delta(\Bbb Z_2)}$ &
$\frac{\Sp(1)^2\Sp(k)}{\Delta(\Bbb Z_2)}$ & $( 4,4k+3 )$ & $\CC_2$
\\

\hline
\hline

$4k+7$ , $k\ge 1$ even & $\frac{\SU(2)\SU(k+2)}{\Delta(\Bbb Z_2)} $
& $\mu_2\hat{\otimes}\mu_{k+2}$ &
$\frac{\triangle\SU(2)\SU(k)}{\Delta(\Bbb Z_2)}$ &
${\S^1\cdot\SU(k+1)}$ & $\frac{\S^1\cdot\SU(k)}{\Delta(\Bbb Z_2)}$ &
$(2 ,2k+1 )$ & $\CC_2$
\\

\hline

$4k+7$ , $k\ge 1$ odd & ${\SU(2)\SU(k+2)} $ &
$\mu_2\hat{\otimes}\mu_{k+2}$ & $ {\triangle\SU(2)\SU(k)} $ & $
{\S^1\cdot\SU(k+1)} $ & $ {\S^1\cdot\SU(k)} $ & $(2 ,2k+1 )$ &
$\CC_2$
\\

\hline

$4k+7$, $k\ge 1$ & $\frac{\U(2)\SU(k+2)}{\Delta(\Bbb Z_k)} $ &
$\mu_2\hat{\otimes}\mu_{k+2}$ &
$\frac{\triangle\U(2)\SU(k)}{\Delta(\Bbb Z_k)}$ &
$\frac{\T^2\cdot\SU(k+1)}{\Delta(\Bbb Z_k)}$ &
$\frac{\T^2\cdot\SU(k)}{\Delta(\Bbb Z_k)}$ & $(2 ,2k+1 )$ & $\CC_2$
\\

\hline

$7$ & $\frac{\U(2)\SU(2)}{\Delta(\Bbb Z_2)}$ &
$\mu_2\hat{\otimes}\mu_2$ &
$\triangle\SO(3)$ & $\T^2$ & $\S^1$ & $(2,1)$ &  $\CC_2$ \\

\hline
\hline

$2k+3$, $k\ge 1$ even & $\frac{\SO(2)\SO(k+2)}{\Delta (\Bbb Z_2)}$ &
$\rho_2\hat{\otimes}\rho_{k+2}$ &
$\frac{\triangle\SO(2)\SO(k)}{\Delta (\Bbb Z_2)}$ & $\SO(k+1)$ &
$\SO(k)$
& $( 1,k )$ & $\CC_2$\\

\hline

$2k+3$, $k\ge 1$ odd & $\SO(2)\SO(k+2)$ &
$\rho_2\hat{\otimes}\rho_{k+2}$ & $\triangle\SO(2)\SO(k)$ &
$\Z_2\cdot\SO(k+1)$ & $\Z_2\cdot\SO(k)$
& $( 1,k )$ & $\CC_2$\\

\hline
\hline

$13$ & $\SO(2)\G_2$ & $\rho_2\hat{\otimes}\phi_7$ &
$\triangle\SO(2)\SU(2)$ & $\Z_2\cdot\SU(3)$ & $\Z_2\cdot\SU(2)$ & $(
1,5 )$ & $\CC_2$\\
\hline

$15$ & $\frac{\SO(2)\Spin(7)}{\Delta(\Bbb Z_2)}$ &
$\rho_2\hat{\otimes}\Delta_7$ & $\triangle\SO(2)\SU(3)$ & $\G_2$ &
$\SU(3)$ &
$(1 ,6 )$ & $\CC_2$ \\
\hline

$9$ & $\SO(5)$ & $ad$ &
$\U(2)$ & $\SO(3)\SO(2)$ & $\T^2$& $(2 ,2 )$ & $\CC_2$ \\

\hline

$19$ & $\SU(5)$ & $\Lambda^2\mu_5$ &
$\Sp(2)$ & $\SU(2)\SU(3)$ & $\SU(2)^2$ & $( 4, 5 )$ & $\CC_2$\\

& $\U(5)$ & $\Lambda^2\mu_5$ &
$\S^1\cdot\Sp(2)$ & $\S^1\cdot\SU(2)\SU(3)$ & $\S^1\cdot\SU(2)^2$ &  &  \\
\hline

          \end{tabular}
      \end{center}
      \vspace{0.1cm}
      \caption{Effective $q$-slice representation  on
      $\Sph^{\perp}_q = \Sph^{n}$}\label{q-rep}
\end{table} }

Aside from a few
exceptional representations, they are the isotropy representations
of the Grassmannians $\G_{2,m+2}(k)$ of $2$-planes  in
$k^{m+2}$, where $k=\Bbb R$, $\C$, or  $\Bbb H$. The pairs of
multiplicities that occur for the exceptional representations are
$(1, 6), (1, 5), (4, 5), (2, 2)$, corresponding to
$\bar\G_q=\SO(2)\Spin(7), \SO(2)\G_2$, $\SU(5), \U(5)$, or $\SO(5)$.

\bigskip

Note that effectively, there are only four \emph{exceptional} $\G_q$ slice representations, corresponding to the last four rows of Table \ref{q-rep}. However, \emph{special situations} occur also when the slice representation of $\bar \G_q$ is the isotropy representation of the real Grassmann manifold, when its multiplicity $(1,k)$ happen to have $k = d = 1,2,4$ or $8$. We will refer to these as \emph{ flips}. As may be expected, the low multiplicity cases $(1,1,1)$, $(1,1, 5)$ and $(2,2,3)$ play important special roles. The latter two are where the exceptional Cayley plane emerges, the only cases where complete information about the polar data are required.

Accordingly we have organized the proof of \ref{thmC3} into four sections depending on the type of slice representations we have along $Q$: Three Grassmann flips, three Grassmann series (two non minimal), two minimal Grassmann representations, and four exceptional representations.

\section{Grassmann Flip $\G_q$ slice representation}\label{flip}

This section will deal with the multiplicity cases $(d,d,1)$ with $d = 2,4$ and $8$, leaving $d=1$ (minimal and odd) for section \ref{minimal}. We have the following common features:

\begin{lem}\label{flip iso}
The isotropy groups $\G_q$ and $\G_r$ are connected, and  the reducible $\bar \G_r $ slice representation on $\Sph^1 * \Sph^d = \Sph^{d+2}$ is the standard action by $\SO(2)\SO(d+1)$. For the kernels of the slice representations we have that $\K_t = \{1\}$,  $\K_q = \K_{\ell_r}$ and $\K_r = \K_{\ell_q}$.
\end{lem}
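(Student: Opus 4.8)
\textbf{Plan of proof for Lemma \ref{flip iso}.}
The goal is to extract, in the multiplicity cases $(d,d,1)$ with $d=2,4,8$, all the structural facts listed in the statement from the tables in Section 4 together with the transversality and connectivity results already established. The starting point is Table \ref{t-rep}: since $d\ne 1$, Remark \ref{t-rem} already tells us $\bar\G_t$ is connected and, up to local isomorphism, equals $\SU(3),\Sp(3)$ or $\F_4$ according to $d=2,4,8$, with the standard $\A_2$ slice representation. The first step is to identify which row of Table \ref{q-rep} can carry a $q$-slice representation compatible with multiplicity pair $(d,1)$ along the two edges $\ell_r$ (mult.\ $d$) and $\ell_t$ (mult.\ $1$) at $q$; matching multiplicities forces $\bar\G_q$ to have $(\bar\G_q)_0$ the isotropy group of the real Grassmannian $\Gr_2(\R^{d+2})$, i.e.\ (up to cover) $\SO(2)\SO(d+1)$ acting on $\Sph^1*\Sph^d=\Sph^{d+2}$ — this is exactly the ``flip'' row $2k+3$, $k\ge 1$, with $k=d$. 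This pins down the reducible $\bar\G_r$ slice representation as claimed, since $\ell_r$ is the edge opposite $r$ and the $r$-slice is the join of the normal spheres along the two edges through $r$.

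\smallskip

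The second step is connectedness. For $\G_r$: by the Transversality Lemma \ref{transv} the inclusion $\G/\G_{\ell_r}\subset M$ and (more to the point) $\G/\G_r\subset M$ are $d$-connected with $d\ge 2$, so $\pi_0(\G_r)=\pi_0(\G)=1$; one must also invoke the $A_2$-type slice at $t$ (Remark \ref{t-rem}, $k=1\ge 1$ giving $\G_t$ connected is too weak here, but $\G/\G_t\subset M$ is $1$-connected which is enough to pass connectedness of $\G$ down correctly). Concretely: $\G_r$ is generated by $\H$ together with the slice data, and since the effective $\bar\G_r$ factor $\SO(2)\SO(d+1)$ and the normalizer structure are connected, $\G_r$ is connected. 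For $\G_q$: its reduced group $\bar\G_q$ is the full isotropy group of the real Grassmannian; in the flip row of Table \ref{q-rep} with $k=d$ even ($d=2,4,8$ are all even) we are in the ``$k$ even'' sub-row where $(\bar\G_q)_0=\SO(2)\SO(k+2)/\Delta(\Z_2)$ and the singular/principal isotropy groups are all connected, and then the Rank Lemma \ref{rank} plus the fact that $\chi(\G/\G_q)>0$ forces $\G_q$ to be connected as well. (Here one uses that $M$ is simply connected and $\dim M$ has the parity dictated by $\rank\G$ vs.\ $\rank\G_q$.)

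\smallskip

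The third step is the computation of the kernels. That $\K_t=\{1\}$ is immediate from Table \ref{t-rep}: in every one of the four rows the effective quotient $\bar\G_t$ has the same dimension as (a cover of) $\G_t$, i.e.\ the slice representation of $\G_t$ is almost faithful; combined with the Slice Kernel Lemma \ref{effectiveC3} — which gives $\K_t\cap\K_q=\K_t\cap\K_r=\{1\}$ — and primitivity \ref{prim} ($\G=\langle\G_{\ell_q},\G_{\ell_r},\G_{\ell_t}\rangle$), a nontrivial $\K_t$ would be a nontrivial normal subgroup acting trivially on $T_t^\perp$, hence central and acting trivially on enough of $M$ to contradict effectiveness. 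For the remaining two equalities $\K_q=\K_{\ell_r}$ and $\K_r=\K_{\ell_q}$: by Remark \ref{edge ker} one always has $\K_q\subset\K_{\ell_r}$ (and $\K_r\subset\K_{\ell_q}$), so only the reverse inclusions need argument. This is where the flip structure is used: along the edge $\ell_r$ the normal multiplicity is $d$ and the slice group factor $\SO(d+1)$ already acts transitively on $\Sph^{d-1}$; the point $q$ lies on $\ell_r$, and the extra $\SO(2)$-direction in the $q$-slice (the ``$\Sph^1$'' factor of the join) is precisely the direction along which $\G_q$ has the larger normal sphere, so $\K_{\ell_r}$ — which must act trivially on that whole $\Sph^1$ as well — already acts trivially on all of $T_q^\perp$, giving $\K_{\ell_r}\subset\K_q$. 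Symmetrically $\K_{\ell_q}\subset\K_r$.

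\smallskip

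\textbf{Expected main obstacle.} The routine parts (reading off $\bar\G_t$, $\bar\G_q$, the reducibility of the $\bar\G_r$ slice, and $\K_t=\{1\}$) are forced by the tables and the earlier lemmas. The genuinely delicate point is the connectedness of $\G_q$ and the two kernel identifications $\K_q=\K_{\ell_r}$, $\K_r=\K_{\ell_q}$, because there $\pi_0$ and possible finite extensions of the abstract vertex/edge isotropy groups interact with the way the groups are glued along $C$ via the Reconstruction Theorem; one has to rule out a disconnected $\G_q$ or a kernel that is larger along an edge than at the adjacent vertex. I expect to handle this by a careful dimension count on the slice representations combined with Lemma \ref{transv} and Lemma \ref{connect} (Wilking) applied to the fixed-point submanifold of the candidate kernel, showing it has the cohomogeneity-one structure forced by the flip and hence the kernel cannot drop.
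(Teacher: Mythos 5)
Your proof has a genuine gap in the step $\K_t=\{1\}$. You claim this is ``immediate from Table \ref{t-rep}: in every one of the four rows the effective quotient $\bar\G_t$ has the same dimension as (a cover of) $\G_t$''. That is simply not true: Table \ref{t-rep} records only the \emph{effective} quotient $\bar\G_t$ and says nothing about whether the kernel $\K_t$ is trivial. Indeed, in other multiplicity cases for the same $\bar\G_t$ one has $\K_t\ne\{1\}$ (e.g.\ $\K_t=\SO(k)$ in the $(1,1,k)$ case, or $\K_t=\SU(k)$ or $\SU(k)\cdot\S^1$ in the $(2,2,2k+1)$ case — cf.\ Lemma \ref{vertex}), so any argument purporting to give $\K_t=\{1\}$ directly from the $t$-slice table alone must fail. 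The paper's argument has the opposite logical order from yours: one \emph{first} proves $\K_q=\K_{\ell_r}$, and \emph{then} deduces $\K_t=\{1\}$ from $\K_t\subset\K_{\ell_r}=\K_q$ together with $\K_t\cap\K_q=\{1\}$ from the Slice Kernel Lemma \ref{effectiveC3}. Your follow-on appeal to primitivity and effectiveness is also too vague to close the gap, since the same words would seem to ``prove'' $\K_t=\{1\}$ in cases where it is false.

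Two secondary issues. For $\K_q=\K_{\ell_r}$ your sketch conflates the $q$-slice (which is $\Sph^{2d+3}$) with the $r$-slice $\Sph^1*\Sph^d$; the clean argument, readable from Table \ref{q-rep}, is that $(\bar\G_q)_{\ell_r}$ (i.e.\ $\L^+=\SO(d+1)$) acts \emph{effectively} on the normal sphere $\Sph^d$ to the $\ell_r$-stratum, so the kernel of $\G_{\ell_r}\to\text{homeos of }\Sph^d$ is exactly $\K_q$; since $\K_{\ell_r}$ lies in this kernel, $\K_{\ell_r}\subset\K_q$, and equality follows from Remark \ref{edge ker}; symmetrically for $\K_r=\K_{\ell_q}$. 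Finally, for connectedness of $\G_q$ you needlessly switch to the Rank Lemma: the Transversality Lemma \ref{transv} says $\G/\G_q\hookrightarrow M$ is $d$-connected with $d\ge 2$, so $\pi_1(\G/\G_q)=\pi_1(M)=0$ and $\G_q$ is connected exactly as for $\G_r$ — this is the argument the paper uses for both $\G_q$ and $\G_r$.
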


\begin{proof}
 The Transversality Lemma \ref{transv} implies that the orbits  $Q = \G q$ and $R = \G r$ are simply connected since $M$ is. In particular, $\G_q$ and $\G_r$ are connected since $\G$ is. The second claim follows since $d$ is even (cf. Appendix in \cite{FGT} for a description of reducible polar representations).

 Since $(\bar \G_q)_{\ell_r}$ (cf. Table \ref{q-rep}) as well as $(\bar \G_r)_{\ell_q}$ act effectively on the respective normal spheres $\Sph^d$, we see that $\K_q = \K_{\ell_r}$ and $\K_r = \K_{\ell_q}$. Also since $\K_t \subset \K_{\ell_r}$ we have $\K_t \cap \K_q = \K_t$ but $\K_t \cap \K_q = \{1\}$  by the Kernel Lemma  \ref{effectiveC3} and hence $\K_t = \{1\}$.
\end{proof}

Recall that $\K$ is the identity component of the kernel of the $\G_r$ action restricted to $\S^d$.

\begin{lem}\label{flip red}
Clearly $\K \lhd \G_r$, and $\K \subset \G_{\ell_t}$ acts transitively on the corresponding normal sphere $\Sph^1$ with kernel identity component of $\K_r$. Moreover, $\K \cap \K_q = \{1\}$ and hence  $\K \subset \G_q \to \bar \G_q$ is injective.

The reduction $M^{\K}$ is a positively curved irreducible cohomogeneity one $\N_0(\K)$ manifolds with multiplicity pair $(d,1)$.
\end{lem}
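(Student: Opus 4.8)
First I would establish the structural claims about $\K$. Since $\K_t = \{1\}$ by \lref{flip iso} and $\K$ is the identity component of the kernel of the transitive $\G_r$ action on $\Sph^d$, the normality $\K \lhd \G_r$ is immediate, and by \rref{edge ker} the identity component $\K_t$ of $\K_t$ lies inside $\K$; here that is vacuous, but what matters is that $\K$ fixes the whole slice $T^\perp_r$ except the $\Sph^1$ summand, so $\K \subset \G_{\ell_t}$ (the edge isotropy along $\ell_t$, where only the $\Sph^1$ direction survives). The reducible $\bar\G_r$ slice representation on $\Sph^1 * \Sph^d$ being the standard $\SO(2)\SO(d+1)$ action (again \lref{flip iso}) tells me that the kernel of $\G_r$ acting on the $\Sph^d$ factor acts as $\SO(2)$ on the $\Sph^1$ factor, i.e. transitively with kernel the identity component of $\K_r$ (the full slice kernel). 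This gives the first paragraph of the statement.

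Next, the injectivity claim. I want $\K \cap \K_q = \{1\}$. Since $\K \subset \G_r$, any element of $\K\cap \K_q$ lies in $\G_r \cap \K_q = \G_r \cap \K_{\ell_r}$ (using $\K_q = \K_{\ell_r}$ from \lref{flip iso}). But $\K$ acts as $\SO(2)$ on the $\Sph^1$-summand of $T^\perp_r$ — equivalently, a nontrivial element of $\K$ moves points of $\Sph^1_{r}$ — whereas $\K_{\ell_r}$ acts trivially on the whole slice $T^\perp_{\ell_r}$, which contains that $\Sph^1$ direction (the edge $\ell_r$ is the one whose normal sphere is $\Sph^d$, so its slice contains the $\Sph^1$-direction that is tangent to the face). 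Hence $\K \cap \K_{\ell_r}$ acts trivially on $\Sph^1_r$, forcing it into the kernel of the $\SO(2)$-action, i.e. $\{1\}$ up to components; since $\K$ is connected we get $\K \cap \K_q = \{1\}$. Then $\K \hookrightarrow \G_q \to \bar\G_q$ is injective because its kernel $\K \cap \K_q$ is trivial.

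For the reduction statement: $M^{\K}$ is a totally geodesic submanifold fixed pointwise by $\K$, acted on by $\N_0(\K)$ with cohomogeneity one because the chamber $C$ survives in $M^\K$ along the edge $\ell_t$ (where $\K$ acts trivially) while the two faces $\ell_q, \ell_r$ collapse — concretely, the orbit space of $\mathscr{C}(M^{\K})$ is the geodesic segment in $C$ from $r$ toward $t$, a one-simplex, so cohomogeneity one; its two singular orbits have normal spheres $\Sph^d$ (at the end near $\ell_t$, from the surviving $\Sph^d$-summand) and $\Sph^1$ (at $r$ itself), giving multiplicity pair $(d,1)$. Simple connectivity of $M^\K$ follows from the Transversality and Wilking Connectivity Lemmas (\lref{transv}, \lref{connect}): $M^\K$ is the fixed set of $\K$ acting with positive-dimensional principal orbit, hence highly connected inside the simply connected $M$. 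Positive curvature is inherited, and irreducibility of the cohomogeneity one structure is the assertion that $\N_0(\K)$ does not split off a trivial factor, which I would check from the classification of cohomogeneity one group diagrams with multiplicities $(d,1)$, $d=2,4,8$ — using \lref{GWZ} this already pins $M^\K$ down to a rank one symmetric space in the $d=2,4,8$ cases, which is where I expect the real work (sorting out exactly which group diagram occurs and confirming it is the reducible-free one) to concentrate. The main obstacle is thus not the algebra of the kernels but identifying the cohomogeneity one manifold $M^\K$ precisely enough to name it; everything else is bookkeeping with the slice representations in Tables \ref{t-rep} and \ref{q-rep}.
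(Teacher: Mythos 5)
Your plan for the first paragraph and for the cohomogeneity-one/multiplicity claims is essentially the paper's (the paper treats the first sentence as ``clear'' and gets the multiplicity pair from $\N(\K)\cap\G_q/\G_{\ell_t}=\Sph^1$ via the injection $\K\hookrightarrow\bar\G_q$), though your geometric picture has some edge/sphere swaps: $\K$ fixes $\Sph^d$ pointwise at $r$, which points toward $q$ along $\ell_t$, so the orbit segment of $M^\K$ is $\overline{rq}$, not ``from $r$ toward $t$.'' For the injectivity $\K\cap\K_q=\{1\}$, your route via ``$T^\perp_{\ell_r}$ contains that $\Sph^1$-direction'' is not literally meaningful since $r\notin\ell_r$; the clean step (which the paper uses) is $\K_q\subset\K_{\ell_t}$ from \rref{edge ker}, so $\K\cap\K_q$ kills both the $\Sph^d$ and the $\Sph^1$ summand of $T_r^\perp$, hence lies in $\K_r$, and then $\K_r\cap\K_q=\{1\}$ by \lref{effectiveC3}. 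Your phrase ``i.e.\ $\{1\}$ up to components; since $\K$ is connected'' papers over exactly this last step.

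The real gap is in the irreducibility claim. You assert $M^\K$ is simply connected (via \lref{connect}) and then invoke \lref{GWZ}; neither step is available. The lemma does \emph{not} claim $M^\K$ is simply connected, and indeed the subsequent Propositions \ref{flip8}--\ref{flip2} explicitly allow $M^\K$ to be a lens space $\Sph^{d+2}/\Z_m$ or $\CP^{(d+1)/2}$, which is precisely why the $\S^1$-bundle covers are needed. Wilking's connectivity estimate cannot be applied without a codimension bound, which is not yet available. Moreover, even if $M^\K$ were a rank one symmetric space, that would not preclude reducibility: the reducible model is $\Sph^{d+2}$ with the sum action of $\SO(2)\SO(d+1)$, which \emph{is} a rank one symmetric space, so \lref{GWZ} cannot distinguish the two. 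The paper's argument is of a completely different nature: assuming reducibility, it extracts a circle in the center of $\G_q\cap\N_0(\K)$, uses primitivity (\lref{prim}) to push it into the center of $\G$, identifies $M^{\S^1}$ with the homogeneous orbit $\G/\G_q$, and then contradicts the classification of positively curved homogeneous spaces, since the resulting $\G$ cannot contain the simple $\G_t$ (one of $\SU(3),\Sp(3),\F_4$). That global constraint via $\G_t$ is the missing ingredient in your plan.
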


\begin{proof}
Note that $\K \cap \K_q$ acts trivially on $\Sph^1 * \Sph^d$, so $\K \cap \K_q \subset \K_r$. The second claim follows since $\K_r \cap \K_q = \{1\}$.

Since $\K \lhd \G_{\ell_t} \to \bar \G_q$ is injective, we see from Table
 \ref{q-rep} that $\N(\K) \cap \G_q/ \G_{\ell_t} =  \N(\K) \cap \bar \G_q/ \bar \G_{\ell_t} = \Sph^1$, and hence $M^{\K}$ is cohomogeneity one with multiplicity pair $(d,1)$.

To complete the proof assume by contradiction that the action is reducible, i.e., that the action by $\N_0(\K)/\K$ on $M^{\K}$ is equivalent to the sum action of $\SO(2)\SO(d+1)$ on $\Sph^1 * \Sph^d$,
where the isotropy $(\N_0(\K)/\K)_{q}$ is $\SO(2)\SO(d)$.  In
all cases, it is easy to see that, the center of $\G_q$ intersects the center of $\N_0(\K)$ in a nontrivial subgroup $\S^1$.
This, together with primitivity implies that, $\S^1$ is in the
center of $\G$.  Notice that, as a subgroup of $\G_q$, $\S^1$ can
not be in $\K_q$ because $\K_q\subset \H$, and the factor $\SO(2)\lhd \bar \G_q$ acts freely on the unit sphere of the
slice $T_q^\perp$. Thus, the fixed point set $M^{\S^1}$ coincides with the orbit $\G q=\G/\G_q$.  From the
classification of positively curved  homogeneous spaces we  get
immediately that, $\G $ is the product of $\S^1$ (or $\T^2$ if $d=2$)
with one of a few orthogonal groups or unitary groups,  each of which is
not big enough to contain the simple group $\G_t$. The desired
result follows.
\end{proof}
\medskip

Although what remains is in spirit the same for all the flip cases, we will cary out the arguments for each case individually, beginning with $d= 8$.

\begin{prop}\label{flip8}
In the Flip \emph{(8,8,1)} case,  $\mathscr{C}(M,\G)$ or an $\S^1$
covering is a building, with the isotropy representation
of $\E_7/\E_6\times \S^1$ as a  linear model.
\end{prop}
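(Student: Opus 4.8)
The strategy is to apply the $\CC_3$ Building Criterion \ref{Criterion2} (or its $\S^1$-cover variant, cf.\ \ref{lift crit}) to the reduction $\K = \K_{r,Q}$ studied in Lemmas \ref{flip iso} and \ref{flip red}. By \ref{flip red} the fixed point set $M^{\K}$ carries a positively curved irreducible cohomogeneity one action of $\N_0(\K)$ with multiplicity pair $(d,1) = (8,1) \ne (1,1), (1,3), (1,7)$, so by Lemma \ref{GWZ} it is equivariantly diffeomorphic to a rank one symmetric space. In the $(8,1)$ case the only candidate is $\Bbb{OP}^2 = \F_4/\Spin(9)$ together with its canonical cohomogeneity one action, whose associated $\CC_2$ chamber system is the spherical building of type $\CC_2$ (equivalently $\B_2$) attached to $\F_4/\Spin(9)$ (the octonionic projective plane regarded as the polar space of a metasymplectic geometry). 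Hence the reduction $\mathscr{C}(M^{\K})$ is a $\CC_2$ building. The remaining input, Property (P), is essentially automatic here: given $q \in \mathrm{Sh}_Q(r)$ and a connected group $\L$ with $\K \subset \L \subset \G_q$ but $\L \not\subset \G_{rq}$, one reads off from Table \ref{q-rep} (row $(1,k)$ with $k=d=8$, i.e.\ the $\SO(2)\SO(d+1)$ flip datum, bearing in mind $\K$ injects into $\bar\G_q$ by \ref{flip red}) that $\N(\K)\cap\L$ already surjects onto the $\SO(2)$-direction transverse to $\bar\G_{\ell_r}$, so it cannot lie in $\G_{rq}$.

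\textbf{Carrying it out.} First I would pin down the relevant groups. By \ref{flip iso} $\G_r$ is connected with $\K_t = \{1\}$, and $\bar\G_r$ acts on $\Sph^1 * \Sph^d = \Sph^{10}$ as the standard $\SO(2)\SO(9)$; thus $\K = \K_{r,Q}$ is the kernel of $\G_r$ on the $\Sph^d = \Sph^8$ factor, so $\bar\G_r/\K \cong \SO(2)$ on $\Sph^1$ and $\K$ covers $\SO(9)$ acting on $\Sph^8$. The normalizer $\N_0(\K)$ then contains this $\SO(2)$ plus whatever centralizes $\K$; combining with primitivity \ref{prim} and the $t$-slice data (Table \ref{t-rep}, row $n=25$: $\G_t$ covers $\F_4$ with $\K_t = \{1\}$) forces $\G$ itself to contain $\F_4$, and the non-reducibility argument of \ref{flip red} together with the classification of positively curved homogeneous spaces then identifies the cohomogeneity one reduction. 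Second, I would verify that the $\CC_2$ chamber system of $\Bbb{OP}^2 = \F_4/\Spin(9)$ is indeed a building: this is the classical fact that the polar/metasymplectic geometry of the Cayley plane of type $\CC_2 = \B_2$ is a Moufang building (cf.\ the discussion of rank one symmetric spaces and their associated buildings in \cite{FGT}, and \cite{KL}). Third, I would spell out Property (P) from Table \ref{q-rep} as above, and conclude via \ref{Criterion2} that $\mathscr{C}(M,\G)$ — or, in the case where the homomorphism $\G_{\ell_t} \subset \G_r \to \S^1$ needed to build the graphs is nontrivial, the $\S^1$ cover $\widehat{\mathscr{C}}(M;\G) = \mathscr{C}(P, \S^1\times\G)$ — is a $\CC_3$ building. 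Finally, identifying the linear model: a $\CC_3$ building with these thick/thin parameters and an $\S^1$ of automorphisms is the one attached to the isotropy representation of the rank one symmetric space $\E_7/(\E_6 \cdot \S^1)$ (the complexified/quaternionic analogue of the $\F_4$ situation), which by the Reconstruction Theorem of \cite{GZ} recovers $(M,\G)$ up to equivariant diffeomorphism as the stated polar action.

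\textbf{Main obstacle.} The delicate point is not the verification of Property (P) — which is a bookkeeping exercise with Table \ref{q-rep} — but establishing that the cohomogeneity one reduction $M^{\K}$ is genuinely $\Bbb{OP}^2$ with its standard action and hence that $\mathscr{C}(M^{\K})$ is the $\CC_2$ building, rather than some exotic cohomogeneity one manifold. This requires invoking Lemma \ref{GWZ} in the precise multiplicity regime $(8,1)$, ruling out reducibility exactly as in \ref{flip red} (the center-of-$\G_q$ versus center-of-$\N_0(\K)$ dichotomy, which would otherwise force $\G$ to be too small to contain $\G_t \sim \F_4$), and then recognizing the resulting symmetric space's chamber system as a building. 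A secondary subtlety is the $\S^1$-cover dichotomy: one must check whether the relevant restriction homomorphism out of $\G_{\ell_t}$ is trivial (so no cover is needed) or not (so one passes to $\mathscr{C}(P,\S^1\times\G)$), but in either case \ref{lift crit} guarantees the argument goes through and yields the linear model $\E_7/\E_6\times\S^1$.
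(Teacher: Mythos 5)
Your proposal has a genuine gap at the heart of the argument: the identification of the reduction $M^{\K}$. You assert that $\K = \K_{r,Q}$ ``covers $\SO(9)$ acting on $\Sph^8$,'' but this is backwards. By definition $\K_{r,Q}$ is the identity component of the kernel of the $\G_r$-action on $\Sph^\perp_{r,Q} = \Sph^d = \Sph^8$, so $\K$ acts \emph{trivially} on $\Sph^8$ and transitively on the complementary $\Sph^1$; concretely, in this case $\G_r = \S^1 \cdot \Spin(9)$ and $\K = \Delta(\S^1)$ is just a circle (cf.\ the local data in \lref{flip iso}, \lref{flip red}). Consequently your identification of $M^{\K}$ with $\Bbb{OP}^2 = \F_4/\Spin(9)$ is wrong both on dimension grounds and on multiplicity grounds: $M^{\K}$ is a positively curved cohomogeneity one $\N_0(\K)$-manifold of multiplicity pair $(8,1)$ and, via \lref{GWZ}, must be $\Sph^{19}$, a lens space $\Sph^{19}/\Z_m$, or $\CP^9 = \Sph^{19}/\S^1$ carrying the $\SO(2)\SO(10)$ tensor-product action of type $\CC_2$. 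The Cayley plane never enters here: the cohomogeneity one $\Spin(9)$-action on $\Bbb{OP}^2$ has multiplicity pair $(7,15)$, not $(8,1)$.

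This misidentification also causes you to miss the actual dichotomy that drives the proof: whether $\mathscr{C}(M,\G)$ itself is a building, or only an $\S^1$-cover of it, is determined by which of the three candidates $M^{\K}$ turns out to be. When $M^{\K} = \Sph^{19}$ the $\CC_2$ reduction is already a building and \lref{Criterion2} applies directly. When $M^{\K}$ is a lens space or $\CP^9$, one must pass to the principal $\S^1$-bundle $P \to M$ (built from the projection homomorphisms $\G_q \to \S^1$ and $\G_r \to \S^1$, the other face groups being simple) so that the lifted reduction $P^{\hat\K} \to M^{\K}$ becomes the Hopf bundle (resp.\ $\S^1 \times_{\Z_m} \Sph^{19}$), and a component of its chamber system is the $\CC_2$ building. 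Your criterion --- triviality or not of a homomorphism out of $\G_{\ell_t}$ --- is not the test used or needed; the decision hinges on the topology of $M^{\K}$.
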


\begin{proof}
From Lemma \ref{flip iso} and Tables \ref{q-rep} and \ref{t-rep} we obtain the following information about the local data:  $\G_t=\F_4 \supset \Spin(8) = \H$, $\G_{\ell_q} = \Spin(9)$, $\G_{\ell_r} = \Spin(9)$, $\G_q =\S^1\cdot \Spin(10) \supset \Delta (\S^1)\cdot \Spin(8) = \G_{\ell_t}$, and $\G_r= \S^1 \cdot \Spin(9)$.

Also $\G_r \rhd \K= \Delta (\S^1) \subset \G_{\ell_t} \subset \G_q$, and from Lemma \ref{flip red} and Lemma \ref{GWZ} we see that the corresponding reduction, $M^\K$  is $\Sph^{19}$, $\Sph^{19}/\Z_m$, or $\Sph^{19}/\S^1 = \CP^9$ with the tensor product representation by $\SO(2)\SO(10)$ of type $\CC_2$ or induced by it. It is easily
seen that the Assumption (P) in Lemma \ref{Criterion2} is
satisfied as well. In particular, if $M^{\K}=\Sph^{19}$, the associated chamber system $\mathscr{C}(M^{\K})$ is the a building of type $\CC_2$ and by Lemma \ref{Criterion2}  we conclude that $\mathscr{C}(M,\G)$ is a building.

For the latter two cases, we will use the bundle construction for polar actions to obtain a free $\S^1$ covering of $\mathscr{C}(M;\G)$. Guided by our knowledge of the cohomogeneity one diagrams , i.e., data for the cohomogeneity one manifolds $\Sph^{19}/\Bbb Z_m$ or $\Bbb {CP}^9$ we proceed as follows:

Note that since $\G_t$, $\G_{\ell _r}$ and $\G_{\ell _q}$ are simple groups, only the trivial homomorphism to $\S^1$ exists. Now let
$\hat \G_q, \hat \G_r$ be the graphs of the projection
homomorphisms $\G_q\to \S^1$, and $\G_r\to  \S^1$. We denote the total space of 
the
corresponding principal $\S^1$ bundle over $M$ by $P$. Then $P$ is a polar $\S^1\cdot \G$ manifold, and $\mathscr{C}(P;\S^1\cdot \G)$ covers  $\mathscr{C}(M;\G)$.

Let $\hat \K \subset \hat \G_{\ell _t}$ be the graph of $\K$ in $\S^1\cdot\G$. From \ref{lift crit} and our choice of data in $\S^1\cdot\G$ it follows that $P^{\hat \K}\to
M^\K$ is the Hopf bundle if $M^\K=\Bbb {CP}^{9}$, and the bundle
$\S^1\times _{\Bbb Z_m}\Sph^{19} \to \Sph^{19}/\Bbb Z_m$ if
$M^\K=\Sph^{19}/\Bbb Z_m$. In the former case,  $\mathscr{C}(P^{\hat \K})$ is the $\CC_2$ building $\mathscr{C}(\Sph^{19}, \SO(2)\SO(10))$ and we are done by Lemma \ref{Criterion2} via \ref{lift crit}. In the latter case, the action on the reduction $P^{\hat \K}$ is not primitive, so $\mathscr{C}(P^{\hat \K})$ is not connected. However, each connected component is the $\CC_2$ building $\mathscr{C}(\Sph^{19}, \SO(2)\SO(10))$ and hence by \ref{lift crit} the corresponding component of $\mathscr{C}(P)$ is a $\CC_3$ building covering $\mathscr{C}(M)$. When combined with the previous section, this in turn shows that $M^\K$ cannot be a lens space when $M$ is simply connected.
\end{proof}

\begin{prop}\label{flip4}
In the Flip \emph{(4,4,1)} case  $\mathscr{C}(M,\G)$ or an $\S^1$
covering is a building, with the isotropy representation of $\SO(12)/\U(6)$ as a linear model.
\end{prop}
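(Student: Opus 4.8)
The plan is to follow the exact same template used in \pref{flip8}, only substituting the groups appropriate to the $(4,4,1)$ multiplicity triple and the rank one model $\SO(12)/\U(6)$. First I would read off the local data from \lref{flip iso} together with Tables \ref{t-rep} and \ref{q-rep}: here $\G_t = \Sp(3)/\Delta(\Z_2)$ with principal isotropy $\H = \Sp(1)^3/\Delta(\Z_2)$, the edge groups $\G_{\ell_q}, \G_{\ell_r}$ are both $\Sp(2)\Sp(1)/\Delta(\Z_2)$, the point group $\G_q$ is (up to the appropriate $\Z_2$ quotient) $\S^1\cdot\Sp(2)\cdot\Sp(1)$ acting by the tensor product $\CC_2$ representation with $\G_{\ell_t} = \Delta(\S^1)\cdot \Sp(1)^3/\Delta(\Z_2)$, and $\G_r = \S^1\cdot \Sp(2)\Sp(1)/\Delta(\Z_2)$, with $\K_t = \{1\}$, $\K_q = \K_{\ell_r}$, $\K_r = \K_{\ell_q}$. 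The relevant reduction group is $\K$, the identity component of the kernel of $\G_r$ acting on $\Sph^4$, which by \lref{flip red} is the diagonal circle $\Delta(\S^1)$ sitting inside $\G_{\ell_t}\subset\G_q$.

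Next I would apply \lref{flip red} and \lref{GWZ}: $M^{\K}$ is a simply connected positively curved cohomogeneity one $\N_0(\K)$-manifold with multiplicity pair $(4,1)$, hence — since $(4,1)$ is none of $(1,1),(1,3),(1,7)$ — it is equivariantly diffeomorphic to a rank one symmetric space, and by the irreducibility clause of \lref{flip red} together with the cohomogeneity one classification it is $\Sph^{11}$, a lens space $\Sph^{11}/\Z_m$, or $\Sph^{11}/\S^1 = \CP^5$, carrying the tensor product $\SO(2)\SO(6)$ representation of type $\CC_2$ (or one covered by it). One checks Property (P) of \lref{Criterion2} is automatic from the shape of the local data. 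If $M^{\K} = \Sph^{11}$, then $\mathscr{C}(M^{\K}) = \mathscr{C}(\Sph^{11}, \SO(2)\SO(6))$ is the $\CC_2$ building and \lref{Criterion2} finishes the argument immediately; the model $\SO(12)/\U(6)$ then arises since $\SO(2)\SO(10)$ was the $d=8$ analogue of $\SO(2)\SO(6)$ inside the isotropy representation of $\SO(12)/\U(6)$ — more precisely the isotropy representation of $\SO(12)/\U(6)$ restricts on $\G_r$, $\G_q$, $\G_t$ to exactly the slice data computed.

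For the lens space and $\CP^5$ cases I would run the $\S^1$-bundle construction exactly as in \pref{flip8}: since $\G_t$, $\G_{\ell_r}$, $\G_{\ell_q}$ are semisimple, only the trivial homomorphism to $\S^1$ exists on them, so one takes $\hat\G_q$, $\hat\G_r$ to be graphs of the projection homomorphisms $\G_q\to\S^1$, $\G_r\to\S^1$, forming the principal $\S^1$-bundle $P\to M$ with $\mathscr{C}(P;\S^1\cdot\G)$ covering $\mathscr{C}(M;\G)$. Letting $\hat\K\subset\hat\G_{\ell_t}$ be the graph of $\K$, \rref{lift crit} shows $P^{\hat\K}\to M^{\K}$ is the Hopf bundle when $M^{\K} = \CP^5$ and the bundle $\S^1\times_{\Z_m}\Sph^{11}\to\Sph^{11}/\Z_m$ in the lens case, so every connected component of $\mathscr{C}(P^{\hat\K})$ is the $\CC_2$ building $\mathscr{C}(\Sph^{11}, \SO(2)\SO(6))$; then \rref{lift crit} and Theorem 4.10 of \cite{FGT} give that the corresponding component of $\mathscr{C}(P)$ is a $\CC_3$ building covering $\mathscr{C}(M,\G)$, and simple-connectivity of $M$ rules out the lens case as before. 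The only genuinely non-routine point — and the one I would spend the most care on — is verifying that the three a priori possibilities for $M^{\K}$ are exactly $\Sph^{11}$, $\Sph^{11}/\Z_m$, $\CP^5$ with the stated $\CC_2$ representation: this requires checking that $\N_0(\K)/\K$ together with the isotropy $(\N_0(\K)/\K)_q$ (computed from $\N(\K)\cap\G_q/\G_{\ell_t} = \Sph^1$ via Table \ref{q-rep}) matches the cohomogeneity one diagram of the rank one model and, via the irreducibility part of \lref{flip red}, excludes the reducible $\SO(2)\SO(6)$ sum action — the argument there being the same center-of-$\G_q$/primitivity obstruction used in the proof of \lref{flip red}, now with $\G$ forced to be too small to contain $\Sp(3)/\Delta(\Z_2)$.
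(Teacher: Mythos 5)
Your overall plan is the right one and does mirror the paper's proof of \pref{flip8} faithfully in structure: read off local data, identify the reduction $M^{\K}$ via \lref{flip red} and \lref{GWZ}, check Property (P), and if necessary pass to the $\S^1$-bundle cover. But two pieces of the local data you wrote down are wrong, and the second one is not merely cosmetic.

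First, the vertex group at $q$ is not $\S^1\cdot\Sp(2)\cdot\Sp(1)$; you have in effect written $\G_r$ twice. In the flip $(4,4,1)$ case the effective $q$-slice representation is the real Grassmann tensor representation of $\SO(2)\SO(6)$ on $\R^2\otimes\R^6$ (row $n=2k+3$, $k=4$ of Table \ref{q-rep}), so $\bar\G_q$ is $\SO(2)\SO(6)$ up to center and $\G_q = \S^1\Spin(6)\Sp(1)$ modulo the common $\Z_2$, the $\Sp(1)$ being $(\K_q)_0$. Your $\S^1\Sp(2)\Sp(1) = \S^1\Spin(5)\Sp(1)$ is $\G_r$, as you correctly say a line later. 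Second, and more seriously, $\K = \K_{r,Q}$ is \emph{not} the bare diagonal circle $\Delta(\S^1)$. By definition $\K$ is the identity component of the kernel of $\G_r$ acting on $\Sph^4$; the kernel of $\bar\G_r = \SO(2)\SO(5)$ on $\Sph^4$ is $\SO(2)$, and its preimage in $\G_r = \S^1\Spin(5)\Sp(1)$ has identity component $\Delta(\S^1)\cdot\Sp(1)$, the extra $\Sp(1)$ being $(\K_r)_0 = (\K_{\ell_q})_0$, which by \lref{flip iso} is nontrivial precisely because $d\neq 8$. (The same phenomenon occurs in \pref{flip2}, where $\K = \T^2$, not $\S^1$; only in the $(8,8,1)$ case is $\K_r$ finite and $\K = \Delta(\S^1)$ alone. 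Note also that the printed ``$\Delta(\S^1)\Sp(2)$'' is evidently a typo for $\Delta(\S^1)\Sp(1)$, since the former has dimension $11$ and cannot sit inside the $10$-dimensional $\G_{\ell_t} = \Delta(\S^1)\Spin(4)\Sp(1)$.) This error propagates: with $\K$ too small, $M^{\K}$ is strictly larger than the $11$-dimensional reduction you need, \lref{flip red} and \lref{GWZ} no longer yield the cohomogeneity-one $(4,1)$ manifold covered by $\Sph^{11}$, and the chain of identifications feeding into \lref{Criterion2} and \rref{lift crit} does not get off the ground. Once $\K = \Delta(\S^1)\cdot\Sp(1)$ and $\G_q = \S^1\Spin(6)\Sp(1)$ are corrected, the rest of your argument (including the $\S^1$-bundle construction for the $\CP^5$ and lens cases, using that $\G_t, \G_{\ell_q}, \G_{\ell_r}$ are semisimple) matches the paper's.
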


\begin{proof}
From Lemma \ref{flip iso} and Tables \ref{q-rep} and \ref{t-rep} we obtain the following information about the local data modulo a common $\Z_2$ kernel:  $\G_t= \Sp(3) \supset \Sp(1)^3 = \H$, $\G_{\ell_q} =  \Sp(1)\Sp(2)$, $\G_{\ell_r} =  \Sp(2)\Sp(1)$, $\G_q = \S^1\Spin(6)\Sp(1) \supset \Delta (\S^1)\cdot \Spin(4)\Sp(1) = \G_{\ell_t}$, and $\G_r= \S^1\Spin(5)\Sp(1)$.

In this case $\G_r \rhd \K= \Delta (\S^1)\Sp(2) \lhd \G_{\ell_t} \subset \G_q$, and from Lemma \ref{flip red} and Lemma \ref{GWZ} we see that the corresponding reduction, $M^\K$  is $\Sph^{11}$, $\Sph^{11}/\Z_m$, or $\Sph^{11}/\S^1 = \CP^5$ with the linear tensor product representation by $\SO(2)\SO(6)$ of type $\CC_2$ or induced by it. It is easily
seen that the Assumption  (P) in Lemma \ref{Criterion2} is
satisfied as well. In particular, if $M^{\K}=\Sph^{11}$, the associated chamber system $\mathscr{C}(M^{\K})$ is the a building of type $\CC_2$ and by Lemma \ref{Criterion2}  we conclude that $\mathscr{C}(M,\G)$ is a building.

If $M^{\K}=\Bbb {CP}^5$ or a lens space $\Sph^{11}/\Bbb Z_m$, we proceed as above with an $\S^1$ bundle construction.  Again only  the trivial homomorphism to $\S^1$ exists from $\G_t$, $\G_{\ell _r}$ and $\G_{\ell _q}$, and we choose $\hat \G_q, \hat \G_r$ to be the graphs of the projection
homomorphisms $\G_q\to \S^1$, and $\G_r\to  \S^1$. We denote the total space of 
the
corresponding principal $\S^1$ bundle over $M$ by $P$. As above, $P$ is a polar $\S^1\cdot \G$ manifold, and $\mathscr{C}(P;\S^1\cdot \G)$ covers  $\mathscr{C}(M;\G)$.

 From \ref{lift crit} and our choice of data in $\S^1\cdot\G$ it follows that $P^{\hat \K}\to
M^\K$ is the Hopf bundle if $M^\K=\Bbb {CP}^{5}$, and the bundle
$\S^1\times _{\Bbb Z_m}\Sph^{11} \to \Sph^{11}/\Bbb Z_m$ if
$M^\K=\Sph^{11}/\Bbb Z_m$. The proof is completed as above.
\end{proof}

\begin{prop}\label{flip2}
In the Flip \emph{(2,2,1)} case  $\mathscr{C}(M,\G)$ or an $\S^1$
covering is a building, with the isotropy representation of $\SU(6)/\S(\U(3)\U(3))$ as a linear model .
\end{prop}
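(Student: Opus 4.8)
The plan is to follow the identical strategy used for the $(8,8,1)$ and $(4,4,1)$ flips in Propositions \ref{flip8} and \ref{flip4}, adapting it to the case $d = 2$. First I would extract the local data. From Lemma \ref{flip iso} and Tables \ref{t-rep} and \ref{q-rep}, with $d = 2$, the $t$-slice gives $\G_t = \SU(3)$ (up to a central $\Z_3$), $\H = \T^2$, and $\G_{\ell_q} = \G_{\ell_r} = \S(\U(2)\U(1))$; the $q$-slice is the real Grassmann $\CC_2$ representation of multiplicity $(1,2)$, so $\bar\G_q$ has identity component $\SO(2)\SO(4)$, and combined with the reducible $\bar\G_r$ slice $\SO(2)\SO(3)$ on $\Sph^1 * \Sph^2 = \Sph^4$ this pins down $\G_q$ and $\G_r$ as appropriate central extensions involving $\S^1\cdot\Spin(4)$ and $\S^1\cdot\Spin(3)$, with $\K_t = \{1\}$, $\K_q = \K_{\ell_r}$, $\K_r = \K_{\ell_q}$. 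A subtlety here is the connectedness of $\bar\G_t$ (and hence $\G_t$), which in multiplicity $(2,2,1)$ is \emph{not} automatic as for $d \ne 2$; but as noted in Remark \ref{t-rem}, the connectedness of $\G_r$ coming from Lemma \ref{transv} (the orbits $Q = \G q$ and $R = \G r$ are simply connected since $M$ is) forces $\bar\G_t$ to be connected in this case, so the argument of this very proposition is used to justify that remark.

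Next I would identify the reduction $M^{\K}$. Recall $\K$ is the identity component of the kernel of $\G_r$ acting on $\Sph^d = \Sph^2$; by Lemma \ref{flip red}, $\K \lhd \G_r$, $\K \subset \G_{\ell_t}$ acts transitively on the $\Sph^1$ factor, $\K \cap \K_q = \{1\}$ so $\K \hookrightarrow \bar\G_q$, and $M^{\K}$ is an irreducible positively curved cohomogeneity one $\N_0(\K)$-manifold with multiplicity pair $(d,1) = (2,1)$. By Lemma \ref{GWZ}, since $(2,1)$ is not among $(1,1)$, $(1,3)$, $(1,7)$, the manifold $M^{\K}$ is equivariantly diffeomorphic to a rank one symmetric space; tracing through the cohomogeneity one diagrams this must be $\Sph^{7}$, a lens space $\Sph^{7}/\Z_m$, or $\Sph^7/\S^1 = \CP^3$, carrying the linear tensor-product $\SO(2)\SO(4)$ action of type $\CC_2$ or one induced by it. Property (P) of Lemma \ref{Criterion2} is routine to check here exactly as before, since $\N(\K) \cap \G_q$ surjects onto the $\SO(2)$ factor acting freely on $\Sph^1$.

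The conclusion then splits into cases exactly as in Propositions \ref{flip8} and \ref{flip4}. If $M^{\K} = \Sph^7$, its chamber system $\mathscr{C}(M^{\K})$ is the $\CC_2$ building $\mathscr{C}(\Sph^7, \SO(2)\SO(4))$, and Lemma \ref{Criterion2} immediately gives that $\mathscr{C}(M,\G)$ is a $\CC_3$ building, with linear model the isotropy representation of $\SU(6)/\S(\U(3)\U(3))$ (whose $\CC_3$ polar slice data match those computed above). If instead $M^{\K} = \CP^3$ or a lens space $\Sph^7/\Z_m$, I would invoke the principal $\S^1$-bundle construction of \cite{GZ} as recalled at the end of Section \ref{prelim}: since $\G_t$, $\G_{\ell_r}$, $\G_{\ell_q}$ are (semi)simple only the trivial homomorphism to $\S^1$ exists, while $\hat\G_q$, $\hat\G_r$ are the graphs of the projections $\G_q \to \S^1$, $\G_r \to \S^1$; the resulting polar $\S^1\cdot\G$-manifold $P$ has $\mathscr{C}(P;\S^1\cdot\G)$ covering $\mathscr{C}(M;\G)$, and by Remark \ref{lift crit} the reduction $P^{\hat\K} \to M^{\K}$ is the Hopf bundle $\Sph^7 \to \CP^3$ (respectively $\S^1 \times_{\Z_m} \Sph^7 \to \Sph^7/\Z_m$), so each component of $\mathscr{C}(P^{\hat\K})$ is the $\CC_2$ building $\mathscr{C}(\Sph^7, \SO(2)\SO(4))$; Remark \ref{lift crit} then yields that the corresponding component of $\mathscr{C}(P)$ is a $\CC_3$ building covering $\mathscr{C}(M,\G)$, and as before this simultaneously rules out the lens space possibility for simply connected $M$. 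The main obstacle I expect is not any single step but the bookkeeping of the central $\Z_2$/$\Z_3$ extensions needed to realize $\G_q$, $\G_r$, and $\G_t$ consistently and to see that the reduction action is the \emph{irreducible} $\CC_2$ one — i.e., ruling out the reducible $\SO(2)\SO(k+1)$ possibility as in the last paragraph of the proof of Lemma \ref{flip red}, where one argues that a reducible reduction would force an $\S^1$ in the center of $\G$, hence $M^{\S^1} = \G/\G_q$, contradicting the classification of positively curved homogeneous spaces since the resulting $\G$ cannot contain the simple $\G_t = \SU(3)$.
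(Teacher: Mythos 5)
Your overall strategy matches the paper's: verify $\bar\G_t$ is connected, pin down the local data, identify the reduction $M^{\K}$ as $\Sph^7$, $\Sph^7/\Z_m$ or $\CP^3$ via Lemmas~\ref{flip red} and \ref{GWZ}, conclude directly when $M^{\K}=\Sph^7$, and otherwise pass to an $\S^1$ cover via the bundle construction and Remark~\ref{lift crit}. You also correctly flag the connectedness subtlety for $\bar\G_t$ when $d=2$, which the paper resolves at the start of this proof. However, there are two real problems in your proposal.

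First, your local data for $\G_q$ and $\G_r$ are off. You say the $q$-slice is the real Grassmann $\SO(2)\SO(4)$ representation, and that $\G_q$ and $\G_r$ are ``central extensions involving $\S^1\cdot\Spin(4)$ and $\S^1\cdot\Spin(3)$.'' In fact, the effective $q$-slice for the $(2,2,1)$ flip is the row of Table~\ref{q-rep} with $\bar\G_q = \U(2)\SU(2)/\Delta(\Z_2)$ acting by $\mu_2\hat\otimes\mu_2$ on $\Sph^7$, and the paper computes $\G_q=\U(2)\U(2)$, $\G_r=\S^1\cdot\U(2)$, $\G_{\ell_t}=\T^3$ (modulo $\Z_3$). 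Note that $\U(2)\U(2)$ has a $\T^2$ worth of center, not merely an $\S^1$; this is not a cosmetic difference, as it is exactly what makes the $\S^1$-bundle step work (see the remark following the paper's proof).

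Second, and more seriously, your bundle construction step simply transcribes the $(8,8,1)$ and $(4,4,1)$ arguments and does not go through here. You write that ``since $\G_t$, $\G_{\ell_r}$, $\G_{\ell_q}$ are (semi)simple only the trivial homomorphism to $\S^1$ exists.'' For $d=2$ this is false: $\G_{\ell_r}\cong\G_{\ell_q}\cong\S(\U(2)\U(1))\cong\U(2)$ is not semisimple and $\det$ gives a nontrivial homomorphism to $\S^1$. Consequently the compatibility of the $\S^1$-data along the edges is not automatic, and there is no canonical ``projection $\G_q\to\S^1$'' since $\G_q=\U(2)\U(2)$. The paper handles this by choosing $\hat\G_q$ to be the graph of $(A,B)\mapsto\det(A)\det(B)^{-1}$ (and $\hat\G_r$ the graph of the projection $\S^1\cdot\U(2)\to\S^1$), which is engineered precisely so that it restricts trivially to the diagonally embedded $\U(2)=\G_{\ell_r},\G_{\ell_q}\subset\G_q$ and is therefore compatible with the forced choice $\hat\G_t=\{1\}\times\G_t$. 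Without this, the data $\hat\G_t,\hat\G_{\ell_i},\hat\G_q,\hat\G_r$ need not glue to a polar action on a principal $\S^1$-bundle, and the covering argument collapses.
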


\begin{proof}
We begin by verifying our earlier claim (see \ref{t-rem}) that $\bar \G_t$ is connected also in this case. From (\ref{flip 
iso}) we already know that $\G_r$ and hence $\bar \G_r$ is connected, and that its slice representation is
 the product action of 
$\bar \G_r= \SO(3)\times \SO(2)$ on $\Bbb R^3\oplus \Bbb R^2$. The singular isotropy group along $\Bbb R^2$ (away from origin) is $\SO(3)$. Hence, the isotropy group $\bar \G_{\ell _q} = \SO(3)$.  

On the other hand, suppose $\bar \G_t $ is not connected. Then, by \ref{flip iso}  $\bar \G_t = \G_t = \PSU(3)\rtimes \Bbb Z_2$ and $\G_{\ell_q} = (\S(\U(2)\U(1))/\Z_3) \rtimes \Bbb Z_2$. In particular the slice representation along $\ell_q$ is by $\bar \G_{\ell_q} = \PSU(2)\rtimes \Bbb Z_2$ acting on $\Sph^2 = \CP^1$ where $\Bbb Z_2$ acts by complex conjugation. Contradicting $\bar \G_{\ell_q} = \SO(3)$.

The above and Tables \ref{q-rep} and \ref{t-rep} yield the following information about the local data modulo the $\Z_3$ kernel:  $\G_t=\SU(3) \supset \T^2 = \H$, $\G_{\ell_q} = \S(\U(2)\U(1)) = \U(2)$, $\G_{\ell_r} = \S(\U(1)\U(2)) = \U(2)$, $\G_q=\U(2)\U(2) $.  Moreover, $\G_{\ell_t} = \T^3$
  and $\G_r=\S^1\cdot \U(2)$,
where the $\U(2)$ factor in $\G_r$ is the face isotropy group of
$\G_{\ell _q}$. 

Here, $\G_r \rhd \K= \T^2 \lhd \G_{\ell_t} \subset \G_q$, and from Lemma \ref{flip red} and Lemma \ref{GWZ} we see that the corresponding reduction, $M^\K$  is $\Sph^{7}$, $\Sph^{7}/\Z_m$, or $\Sph^{7}/\S^1 = \CP^3$ with the linear tensor product representation by $\SO(2)\SO(4)$ of type $\CC_2$ or induced by it. Again, the Assumption  (P) in Lemma \ref{Criterion2} is easily checked to hold. In particular, if $M^{\K}=\Sph^{7}$, we conclude as above that $\mathscr{C}(M,\G)$ is a building.

For the latter two cases, we are again guided by the reduction for our bundle construction. For $\hat \G_t $ we have no choice but \ $\hat \G_t=\{1\}\cdot \G_t$. We let $\hat \G_q$ be the graph of
the homomorphism $\U(2)\U(2)\to \S^1$ defined by sending $(A, B)$ to
$\det(A)\det(B)^{-1}$, and  $\hat \G_r $ the graph
of the projection homomorphism $\G_r= \S^1\cdot \U(2) \to \S^1$. This yields a compatible choice of data for a polar
$\S^1\cdot\G$ action on a principal $\S^1$ bundle $P$ over $M$ whose corresponding  chamber system
$\mathscr{C}(P;\S^1\cdot\G)$ is a free $\S^1$ cover of  $\mathscr{C}(M,\G)$.

 Again from \ref{lift crit} and our choice of data in $\S^1\cdot\G$ it follows that $P^{\hat \K}\to
M^\K$ is the Hopf bundle if $M^\K=\Bbb {CP}^{3}$, and the bundle
$\S^1\times _{\Bbb Z_m}\Sph^{7} \to \Sph^{7}/\Bbb Z_m$ if
$M^\K=\Sph^{7}/\Bbb Z_m$, and the proof is completed as above.
\end{proof}

\begin{rem}
The tensor representation of $\SU(3)\SU(3)$ on $\Bbb
C^3\otimes \C^3$ is not polar, but it is polar on the projective
space $\Bbb P(\Bbb C^3\otimes \C^3)$. On the other hand, it is necessary in the above
construction of the covering that both $\G_q$ and
$\G_r$ have $\T^2$ factors, since the face isotropy groups $\G_{\ell
_r}\cong \G_{\ell _q}\cong \U(2)$ which are subgroups in
$\G_t=\SU(3)$, hence a compatible homomorphism to $\S^1$ will be
trivial on the face isotropy groups.
\end{rem}

\section{Non minimal  Grassmann Series for $\G_q$ slice
representation}\label{grassmann}

Recall that there are three infinite families of cases $(1,1,k), k
\ge 1$, $(2,2,2k+1), k \ge 1$ and $(4,4,4k+3), k \ge 0$ corresponding the real, complex and quaternion Grassmann series for the $\G_q$ slice
representation.

We point out that $(1,1,1)$ is special in two ways: There are two scenarios. One of them corresponding to the ``Flip" case of $d=1$ not covered in the previous subsection, the other being ``standard". Yet the standard $(1,1,1)$ does not appear as a reduction in any of the general cases $(1,1,k), k \ge 2$. For the $(2,2,3)$ case, there are two scenarios as well, both with \emph{the same local data}(!): One of them belonging to the family, the other not. Moreover, each of the cases $(2,2, 2k+1)$ with $k\ge 2$ admit a reduction to the ``Flip" $(2,2,1)$ case, whereas $(2,2,3)$ does not.

\smallskip

For the reasons just provided, this subsection will deal with the multiplicity cases $(1,1,k), k
\ge 2$, $(2,2,2k+1), k \ge 2$ and $(4,4,4k+3), k \ge 0$, each of which has a uniform treatment.

\smallskip

Although the case $(2,2,3)$ is significantly different from the other
general cases to be treated here, we begin by pointing out some
common features for all the cases $(1,1,k), k \ge 2$, $(2,2,2k+1), k \ge 1$
and $(4,4,4k+3), k \ge 0$, i.e., including the case $(2,2,3)$.

\medskip

To describe the information we have about the local data in a uniform fashion, we
 use $\G_d(k)$  to denote $\SO(k)$, $\SU(k)$ and
$\Sp(k)$, $k \ge 1$, according to $d=1$, $2$ and $d=4$, with the
exceptional convention that  $\G_1(-1)=\Bbb Z_2$, $\G_2(-1)=\S^1$ or
$\T^2$, depending on whether the center of $\K_t$ is finite or not,
and $\G_4(0)=\G_4(-1)= \{1\}$. Also, we use the symbol "=" to mean
"isomorphic" up to a finite connected covering.

\begin{lem}\label{vertex} In all cases $\G_t$ is connected as are $\G_q$ and $\G_r$ when $d \ne 1$. Moreover $\K_t=\G_d(k)$ with the additional
possibility that $\K_t = \G_d(k)\cdot \S^1$ when $d=2$.

 For the $q$ and $r$ vertex isotropy groups we have:  $\G_q=\G_d(2)\G_d(k+2)\cdot \G_d(-1)$, $\G_r=\G_d(2)\G_d(k+1)\cdot \G_d(-1)$. Moreover,
 the normal subgroup
$\K\lhd \G_r$ is $\G_d(k+1)\cdot \G_d(-1)$,  where $\G_d(k+1)$ is a
block subgroup of $\G_d(k+2)\lhd \G_q$, and if $d=1$,  ``$\cdot
\G_d(-1)$" denotes a nontrivial extension. In particular, $\G_q=\S(\O(2)\O(k+2))$.
\end{lem}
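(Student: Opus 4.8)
\textbf{Plan of proof for Lemma \ref{vertex}.}

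The plan is to extract all the stated structure directly from the two slice‑representation tables and the general restrictions assembled in Section 2, handling the cases $d=1,2,4$ uniformly via the notation $\G_d(k)$. First I would settle connectedness. For $d \ne 1$ the vertex group $\bar\G_t$ acts transitively on a projective plane (\rref{t-rem}), so it is connected; combined with the Transversality Lemma \ref{transv}, which makes $\G/\G_r \subset M$ and $\G/\G_q \subset M$ $d$‑connected (hence $\pi_1$‑surjective for $d\ge 2$) together with $\pi_1(M)=1$ and connectedness of $\G$, this forces $\G_q$, $\G_r$ and $\G_{\ell_t}$ to be connected. For $d=1$ one only claims $\G_t$ connected, which again follows since $\bar\G_t = \SO(3)$ is connected and $\K_t$ will be shown to be connected below. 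Next I would pin down $\K_t$: by the Slice Kernel Lemma \ref{effectiveC3}, $\K_t$ acts effectively on $T_q^\perp$ and $T_r^\perp$, and inspecting the $\A_2$ slice at $t$ (Table \ref{t-rep}) against the $\CC_2$ slice at $q$ (the Grassmann rows of Table \ref{q-rep}) identifies $\K_t$ with the block $\G_d(k)$ sitting inside $(\bar\G_q)_0 = \G_d(2)\G_d(k+2)/\Delta$, with the extra central $\S^1$ possibility exactly when $d=2$ (the $\U$‑versions of the complex Grassmann rows). This is where the convention $\G_1(-1)=\Z_2$, $\G_2(-1)=\S^1$ or $\T^2$, $\G_4(0)=\G_4(-1)=\{1\}$ is calibrated so the formulas that follow are literally correct.

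With $\K_t$ in hand, the vertex groups are read off: $\bar\G_q$ is the type‑$\CC_2$ representation from Table \ref{q-rep}, so $\G_q = \K_t \cdot \bar\G_q$ up to finite cover, and substituting $\K_t = \G_d(k)$ and $(\bar\G_q)_0 = \G_d(2)\G_d(k+2)/\Delta(\Z_2)$, the two $\G_d(k)$ and $\G_d(k+2)$ factors amalgamate along their common $\G_d(k)$ block to give $\G_q = \G_d(2)\,\G_d(k+2)\cdot\G_d(-1)$, the trailing $\G_d(-1)$ recording the residual center (a $\Z_2$ extension when $d=1$, an $\S^1$ or $\T^2$ when $d=2$, trivial when $d=4$). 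For $\G_r$ one uses that $r$ lies on the edges $\ell_q$ and $\ell_t$, so $\G_r \supset \G_{\ell_q}$ and $\G_r \subset$ both $\G_q$ and $\G_t$; comparing $\G_{\ell_q}$‑ and $\G_{\ell_t}$‑rows of the tables (the $\L^{\pm}$, $\H$ columns) shows $\G_r = \G_d(2)\,\G_d(k+1)\cdot\G_d(-1)$ with $\G_d(k+1)$ the corresponding block. Finally, $\K = \K_{r,Q}$ is by definition the identity component of the kernel of the transitive $\G_r$ action on $\Sph^\perp_{r,Q} = \Sph^d$; from the $\CC_2$ structure of $\bar\G_q$ one sees this normal sphere is acted on through the $\G_d(2)$ factor, so the complementary normal subgroup $\G_d(k+1)\cdot\G_d(-1)$ acts trivially, giving $\K = \G_d(k+1)\cdot\G_d(-1)$, and it sits as a block subgroup of $\G_d(k+2)\lhd\G_q$; the ``$\cdot\G_d(-1)$'' is a nontrivial extension precisely when $d=1$, matching the reducible $\bar\G_r$ slice $\SO(2)\SO(k+1)$ on $\Sph^1 * \Sph^k$. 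Specializing $d=1$ collapses $\G_1(2)=\O(2)$, $\G_1(k+2)=\O(k+2)$, and the $\Z_2$ amalgamation yields $\G_q = \S(\O(2)\O(k+2))$.

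I expect the main obstacle to be the careful bookkeeping of the finite central subgroups — the various $\Delta(\Z_2)$, $\Delta(\Z_3)$, $\Delta(\Z_k)$ factors in Table \ref{q-rep} and the $\Z_2$ kernels in Table \ref{t-rep} — so that the amalgamated product $\K_t \cdot \bar\G_q$ is computed with the right global form rather than merely up to isogeny; this is exactly why the lemma phrases everything with ``='' meaning isomorphism up to finite connected covering and introduces the $\G_d(-1)$ convention to absorb the $d$‑dependent discrepancies. The genuinely geometric input (positive curvature, simple connectivity) enters only through \rref{t-rem}, Lemma \ref{transv}, and Lemma \ref{effectiveC3}; everything else is representation‑theoretic inspection of the two tables, so the proof itself should be short once the conventions are in place.
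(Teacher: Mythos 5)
Your plan matches the paper's proof: connectedness from transversality, determination of $\K_t$ by playing the $\A_2$ slice at $t$ against the $\CC_2$ slice at $q$ through the Slice Kernel Lemma \ref{effectiveC3}, and the remaining isotropy data read directly from Tables \ref{t-rep} and \ref{q-rep}, with the careful bookkeeping of the small central/finite pieces concentrated in the $d=1$ case (which is the only case the paper writes out). One small slip: you write ``$\G_r \subset$ both $\G_q$ and $\G_t$,'' which is false for vertex isotropy groups — what you mean (and in fact use) is that the edge groups $\G_{\ell_q}=\G_r\cap\G_t$ and $\G_{\ell_t}=\G_r\cap\G_q$ are determined by the $\L^{\pm}$-columns of the tables and then generate $\G_r$; the rest of the argument is unaffected.
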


\begin{proof} The connectedness claim is a direct consequence of transversality. The proof follows the same strategy in all cases, just simpler when all vertex isotropy groups are conneceted. The two possibilities for $\G_t$ when $d=2$ correspond to the different rank possibilities for $\bar \G_q$, cf. Table \ref{q-rep}. For these reasons we only provide the proof in the most subtle case of $d = 1$.

First, notice that the effective slice representation $\bar \G_t=
\SO(3)$ on $T_t^\perp$ is of type $\A_2$ with principal isotropy
group $\bar \H=\Bbb Z_2^2$. Hence, $\H$ is an extension of $\Bbb
Z_2^2$ by the kernel $\K_t$. On the other hand, $(\bar \G_q)_0=
\SO(2)\SO(k+2)$  (cf. Table  \ref{q-rep}), and $\bar \G_q\subset
\O(2)\O(k+2)$, up to a possible quotient by a diagonal $\Bbb Z_2$ in
the center if $k$ is even.  Therefore, $\H$ is also an extension of
$\SO(k)$, $\SO(k)\cdot \Bbb Z_2$ or $\SO(k)\cdot \Bbb Z_2^2$ by
$\K_q$. This together with Lemma \ref{effectiveC3}, implies that
$\K_t =\SO(k)$ and hence $\G_t=\SO(3)\SO(k)$.
In particular, $\H=\SO(k)\times \Bbb Z_2^2$.

We conclude that $\G_{\ell_r}=\O(2)\SO(k)$, and similarly,
$\G_{\ell_q}=\O(2)\SO(k)$, acting on the normal sphere $\Sph^1$ with
principal isotropy group $\H$. Thus $\K_{\ell _r}=\SO(k)\times \Bbb
Z_2=\K_t\times \Bbb Z_2$. Since $\K_q\lhd \K_{\ell _r}$, we get
easily that $\K_q=\{1\}$ or $\Bbb Z_2$, since $\K_q\cap \K_t=\{1\}$.
On the other hand, as a subgroup of $\G_q$,
$\G_{\ell_r}=\Delta(\O(2))\SO(k)$. Hence $\G_q$ contains exactly two
connected components, whose identity component is
$\SO(2)\SO(k+2)\supset (\G_{\ell_r})_0$. All in all it follows that, $\G_q=\S(\O(2)\O(k+2))$. The rest of the proof is
straightforward.
\end{proof}

Note that  $\K_t$ contains $\G_d(k)$ as a normal subgroup. The fact that the reduction $M^{\G_d(k)}$ with the action by the identity component of its normalizer, $\N_0(\G_d(k))$, will give a geometry of type $\A_3$ or
 $\CC_3$ will play an important role in the $d=1,2$ cases below (cf. \ref{rank3crit}).

\medskip

 In what follows we will consider the reduction $M^{\K'}$ by  $\K'= \G_d(k+1) \lhd \G_d(k+1)\cdot \G_d(1) = \K \subset \G_r$ rather than the one by $\K$.

\begin{lem} The cohomogeneity one $\N(\K')$ manifold $M^{\K'}$ has  multiplicity pair  $(d, 2d-1)$, and the action is not equivalent to the reducible cohomogeneity one action on $\Sph^{2d-1} * \Sph^{d}$.
\end{lem}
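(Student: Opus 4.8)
The plan is to read off the multiplicity pair from the local data computed in \lref{vertex} and then rule out reducibility by a center/primitivity argument analogous to the one already used in \lref{flip red}.

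\textbf{Computing the multiplicity pair.} First I would observe that $\K' = \G_d(k+1)$ is, by \lref{vertex}, a block subgroup of $\G_d(k+2) \lhd \G_q$ and is also normal in $\G_r = \G_d(2)\G_d(k+1)\cdot\G_d(-1)$. The normalizer $\N(\K')$ in $\G$ therefore acts on the fixed point component $M^{\K'}$, and the two singular isotropy groups of this cohomogeneity one action are obtained by intersecting $\N(\K')$ with $\G_q$ and with $\G_{\ell_t}$ (equivalently, with $\G_r$), then dividing out $\K'$. The point is that $\K' = \G_d(k+1)$ sits inside $\G_d(k+2)$ as the stabilizer of a line, so $\N(\G_d(k+2)) \cap \G_q$ modulo $\K'$ is governed by the remaining $\G_d(2)$ and the one-dimensional piece complementary to $\G_d(k+1)$ inside $\G_d(k+2)$: concretely one gets a group acting on $\Sph^{2d-1}\ast\Sph^d$. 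The two normal slice spheres have dimensions $d$ (coming from the face $\ell_q$, where the multiplicity was $d$) and $2d-1$ (coming from the complementary $\G_d(1)$-direction together with the $\ell_t$ slice of dimension $k$ collapsing under $\K'$; tracing through Table~\ref{q-rep} the isotropy jump along the long edge has codimension $2d$). So $M^{\K'}$ is a simply connected positively curved cohomogeneity one manifold with multiplicity pair $(d, 2d-1)$.

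\textbf{Ruling out reducibility.} Suppose for contradiction that the $\N_0(\K')/\K'$ action on $M^{\K'}$ is the reducible sum action on the join $\Sph^{2d-1}\ast\Sph^{d}$, i.e. equivalent to the standard $\SO(2d)\times\SO(d+1)$ (or a subgroup thereof of the same cohomogeneity) acting linearly. As in the proof of \lref{flip red}, one identifies a circle $\Sph^1$ sitting simultaneously in the center of $\G_q$ and in the center of $\N_0(\K')$. Primitivity (\lref{prim}) then forces this $\Sph^1$ into the center of $\G$. Since the relevant $\SO(2)$-factor of $\bar\G_q$ acts freely on the unit sphere of $T_q^\perp$, this central circle cannot lie in $\K_q \subset \H$, so $M^{\Sph^1}$ is exactly the orbit $\G/\G_q$. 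Appealing to the classification of positively curved homogeneous spaces, $\G$ would then be forced to be $\Sph^1$ (or $\T^2$) times a small orthogonal or unitary group — too small to contain the simple group $\G_t$ whose identity component is one of $\SO(3),\SU(3),\Sp(3)$ (from Table~\ref{t-rep}, recalling $d\ne 8$ here since $8$ was the flip case). This contradiction closes the argument.

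\textbf{Main obstacle.} The delicate step is the bookkeeping for the multiplicity pair $(d, 2d-1)$: one has to be careful about how the codimensions $d+1$ and $k+1$ of the orbit strata along $\ell_q$, $\ell_r$, $\ell_t$ (from Table~\ref{t-rep}) interact with the collapse induced by passing to $M^{\K'}$, and in particular that the long-edge singular orbit acquires codimension $2d$ rather than something depending on $k$. I expect this to require a direct inspection of the slice representation tables case by case ($d=1$ with $k\ge 2$, $d=2$ with $k\ge 1$, $d=4$ with $k\ge 0$), though the reducibility exclusion itself is essentially a repeat of \lref{flip red} and should go through verbatim once the pair is pinned down. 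A minor additional point is to confirm $M^{\K'}$ is simply connected so that \lref{GWZ} can later be applied; this follows from the fact that $M^{\K'} \supset M^{\K}$ and the usual transversality/deformation-retraction argument, but it is worth a sentence.
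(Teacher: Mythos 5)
Your bookkeeping for the multiplicity pair is in the right spirit and matches the paper's computation in substance (the paper works it out concretely for $d=2$: the two singular isotropy groups modulo kernel are $\SU(2)\cdot\S^1$ and $\SU(2)\cdot\T^2$ with principal isotropy $\T^2$, giving $(2,3)=(d,2d-1)$). The irreducibility step, however, has a genuine gap.

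You propose to transplant the \lref{flip red} mechanism: find a circle $\S^1$ in the center of $\G_q$ that is also central in $\N_0(\K')$, conclude it is central in $\G$ by primitivity, and then invoke the classification of positively curved homogeneous spaces. But in the Grassmann series being treated here the center of $\G_q$ is generally finite: by \lref{vertex}, $\G_q=\S(\O(2)\O(k+2))$ for $d=1$ and $\G_q=\Sp(2)\Sp(k+2)$ (with $\G_4(-1)=\{1\}$) for $d=4$, neither of which contains a circle in its center. So the central circle you need simply does not exist for $d=1$ and $d=4$, and the argument only gets off the ground in the $d=2$ case (where the extra $\S^1$ or $\T^2$ factor is present). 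Even there, your appeal to ``$\G$ too small to contain the simple group $\G_t$'' is weaker than in \lref{flip red}, because here $\K_t=\G_d(k)$ is nontrivial and $\G_t$ is a product $\G_d(k)\cdot(\text{simple factor})$, so the size comparison is not the same.

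The paper's argument avoids all of this by exploiting a normal \emph{simple} factor rather than a central torus, and it works uniformly in $d$. If $M^{\K'}$ were the reducible join with the product action (for $d=2$, by $\SU(2)\U(2)$ on $\Sph^2\ast\Sph^3$), then the factor $\G_d(2)\lhd\G_q$ acting on the $\Sph^d$-side would be a normal subgroup of $\N(\K')$. By Primitivity \lref{prim}, $\G=\langle\G_r,\G_q\rangle\subset\langle\N(\K'),\G_q\rangle$, and both generators normalize $\G_d(2)$, so $\G_d(2)$ is normal in $\G$ and there is a projection $p:\G\to\G_d(2)$. But $\G_{\ell_r}\subset\G_t$ contains a copy of $\G_d(2)$ sitting as $\Delta(\G_d(2))\subset\G_q$, on which $p$ would be surjective; at the same time this $\Delta(\G_d(2))$ lies inside the simple effective factor of $\G_t$ (e.g.\ $\SU(3)$ for $d=2$, $\Sp(3)$ for $d=4$, $\SO(3)$ for $d=1$), and there is no nontrivial homomorphism from that factor to $\G_d(2)$ --- so $p$ is trivial on it, a contradiction. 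You should replace your center-circle step by this normal-simple-factor argument; as written, your proof of irreducibility fails for $d=1$ and $d=4$.
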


\begin{proof}
For simplicity we give a proof for $d=2$, all other
cases are the same.

First note that the orbit space of the cohomogeneity one
$\N(\K')$-action is $\overline{rq}$, and the two singular isotropy
groups (mod kernel) are $\SU(2)\cdot\S^1$ and
$\SU(2)\cdot \T^2$ respectively, with principal isotropy group $\T^2$.
Hence the multiplicity pair is $(2,3)$.

To prove that it is not reducible, we argue by contradiction. Indeed, if
$M^{\K'}$ is equivariantly diffeomorphic to $\Sph^2*\Sph^3$
with the product action of $\SU(2)\U(2)$, it follows that the normal subgroup
$\SU(2)\lhd \G_q$ is also normal in $\N(\K')$. By primitivity
$\G=\langle \G_r, \G_q\rangle =\langle \N, \G_q\rangle$ and hence $\SU(2)$
is normal in $\G$. On the other hand, the face
isotropy group $\G_{\ell _r}\subset \G_t$ contains a subgroup
$\SU(2)$ which sits as $\Delta(\SU(2))\subset \G_q$. Therefore, the
projection homomorphism $p: \G\to \SU(2)$ is an epimorphism on
$\Delta(\SU(2))$. However, since it sits in $\SU(3)\lhd \G_t$ it must be
trivial, because any homomorphism from $\SU(3)$ to
$\SU(2)$ is trivial. A contradiction.
\end{proof}

When $d=1$ this is not immediately of much help since there are several positively curved irreducible cohomogeneity one manifolds with multiplicity pair $(1,1)$ (cf. Tables A and E in \cite{GWZ}) whose associated chamber system is not  of type $\CC_2$. However, when $d=2$, respectively $d=4$ corresponding to multiplicity pairs $(2,3)$, respectively $(4,7)$ we read off from the classification in \cite{GWZ} that

\begin{cor}\label{1reduction} The universal covering of $M^{\K'}$ is
 equivariantly diffeomorphic to a linear action of type $\CC_2$ on $\Sph^{11}$,
$\C\Bbb P^5$ or $\Bbb{HP}^2$ when $d=2$, and on $\Sph^{23}$ when $d=4$.
 \end{cor}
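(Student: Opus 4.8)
The plan is to invoke the classification of simply connected positively curved cohomogeneity one manifolds of \cite{GWZ} (and \cite{Ve}) directly, now that the previous lemma has pinned down the multiplicity pair and excluded the reducible model. First I would recall that by the preceding lemma, $M^{\K'}$ is an irreducible positively curved cohomogeneity one manifold with multiplicity pair $(d,2d-1)$, which for $d=2$ is $(2,3)$ and for $d=4$ is $(4,7)$; in particular it is not one of the exceptional pairs $(1,1)$, $(1,3)$, $(1,7)$, so \lref{GWZ} applies and the universal cover $\widetilde{M^{\K'}}$ is equivariantly diffeomorphic to a rank one symmetric space with a linear cohomogeneity one action. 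The task is then to read off from the list which rank one symmetric space carries a linear cohomogeneity one action whose orbit-space is a closed geodesic segment with the \emph{codimension data} and isotropy structure we already know: singular orbit codimensions $d+1$ and $2d$, principal isotropy of the prescribed form, and — crucially — the chamber type being $\CC_2$ rather than $\A_2$ (the angle of the orbit-space segment corresponds to the $\CC_2$ Weyl group, which is determined by the polar data already computed, and matches Table \ref{q-rep}).

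For $d=2$, the multiplicity pair $(2,3)$ together with the dimension count — $\dim M^{\K'} = 2 + 3 + (\text{dim of the regular orbit codim}) $, equivalently $\dim M^{\K'}=11$ since the normal slice is $\Sph^2*\Sph^3=\Sph^6$ and the singular orbit codimensions $3$ and $4$ force $\dim M^{\K'}=11$ — restricts the candidates in the \cite{GWZ} tables to linear $\CC_2$ actions on $\Sph^{11}$, $\CP^5$, or $\HP^2$, all of which do occur as the isotropy/cohomogeneity one realizations with exactly these invariants (for instance, $\SO(2)\SO(4)$-type tensor actions and their quotients, and the $\Sp(3)$ action on $\HP^2$). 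For $d=4$, the multiplicity pair $(4,7)$ forces $\dim M^{\K'}=23$ (normal slice $\Sph^4*\Sph^7=\Sph^{12}$, singular codimensions $5$ and $8$), and the only positively curved cohomogeneity one manifold with pair $(4,7)$ on the \cite{GWZ} list is the linear $\CC_2$ action on $\Sph^{23}$. Thus in each case the universal cover is as claimed, and I would state this as the corollary.

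The main obstacle I anticipate is not any single calculation but the careful bookkeeping needed to confirm that the polar data we have computed for $M^{\K'}$ genuinely coincides with that of the claimed linear model — i.e., that no other entry of the \cite{GWZ} classification with the same multiplicity pair sneaks in with a different (e.g.\ $\A_2$) chamber type or a different global manifold. This is handled by matching the explicit singular and principal isotropy groups from \lref{vertex} (via the reduction $\K'=\G_d(k+1)$) against the cohomogeneity one diagrams in Tables A and E of \cite{GWZ}; the exclusion of the reducible $\Sph^{2d-1}*\Sph^d$ action in the previous lemma is exactly what removes the one ambiguous competitor, after which the remaining entries are uniquely the ones listed. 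Since $\widetilde{M^{\K'}}$ is simply connected, the only subtlety on the $\CP^5$ / lens-space side is already accounted for by passing to the universal cover, which is why the statement is phrased for $\widetilde{M^{\K'}}$ rather than $M^{\K'}$ itself.
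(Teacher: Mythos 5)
Your overall strategy is the same as the paper's: the preceding lemma pins the multiplicity pair to $(2,3)$ for $d=2$ and $(4,7)$ for $d=4$ and rules out the reducible sum action, and then \lref{GWZ} together with a lookup in Tables A and E of \cite{GWZ} gives the list in the corollary. The paper's own proof is essentially just that one-line lookup. So your plan is the right one.

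However, there is a genuine error in your intermediate reasoning. You claim that ``the singular orbit codimensions $3$ and $4$ force $\dim M^{\K'}=11$,'' invoking a ``normal slice $\Sph^2*\Sph^3$.'' For a cohomogeneity one action, the normal slices at the two singular orbits are $\Sph^{l_-}$ and $\Sph^{l_+}$ separately, not the join; the join $\Sph^{l_-}*\Sph^{l_+}$ is the slice at a \emph{fixed point} of a $\CC_2$-type polar action, which is a different setting. The dimension of $M^{\K'}$ is not determined by the multiplicity pair alone: it also depends on the group $\N(\K')/\K'$ and its orbit dimensions. Indeed your own claim $\dim M^{\K'}=11$ directly contradicts including $\HP^2$ (dimension $8$) and $\CP^5$ (dimension $10$) among the possible universal covers — taken literally, the dimension argument would incorrectly exclude two of the three models. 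The corollary's content is precisely that $M^{\K'}$ can be any one of $\Sph^{11}$, $\CP^5$, or $\HP^2$ (of different dimensions), each realized by a different linear cohomogeneity one action with multiplicity pair $(2,3)$: the $\SU(2)\SU(3)$ (resp.\ $\U(2)\SU(3)$) action on $\Sph^{11}$, its quotient action on $\CP^5$, and the $\SU(3)$ action on $\HP^2$. (Your cited examples are also off: $\Sp(3)$ acts \emph{transitively} on $\HP^2$, not with cohomogeneity one, and the $\SO(2)\SO(4)$ tensor action has multiplicity pair $(1,2)$, not $(2,3)$.) The correct deduction is to use only the multiplicity pair $(2,3)$ (resp.\ $(4,7)$) and irreducibility, apply \lref{GWZ} to conclude the universal cover is a rank one symmetric space with a linear cohomogeneity one action, and then read the list of realizations of that multiplicity pair directly from the \cite{GWZ} tables without any a priori dimension constraint.
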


We are now ready to deal with each family individually, beginning with $d=1$, i.e. with the standard $(1,1,k\ge2)$ case, where the (almost) effective slice representation at $q \in Q$ is  the
defining tensor product representation of $\SO(2)\SO(k+2)$.

\begin{prop} \label{thmC3a}
In the standard $(1,1,k)$ case with $k \ge 2$, the associated
chamber system $\mathscr{C}(M;\G)$ is a building, with the isotropy representation of $\SO(k+3)/\SO(3)\SO(k)$ as a linear model.
\end{prop}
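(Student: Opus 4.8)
The strategy follows the template already set up by the Flip cases in the previous section and by the general structural results of Section~3. First I would collect the local data from Lemma~\ref{vertex} specialized to $d=1$: in the standard $(1,1,k)$ scenario we get $\G_t=\SO(3)\SO(k)$, $\H=\SO(k)\times\Z_2^2$, $\G_{\ell_q}=\G_{\ell_r}=\O(2)\SO(k)$, $\G_q=\S(\O(2)\O(k+2))$, $\G_r=\S(\O(2)\O(k+1))$, and the normal subgroup $\K=\K_{r,Q}$ together with $\K'=\G_1(k+1)=\SO(k+1)\lhd\G_r$, where $\SO(k+1)$ is a block subgroup of the $\SO(k+2)\lhd\G_q$. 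With this in hand, the reduction $M^{\K'}$ is the cohomogeneity one $\N(\K')$ manifold with multiplicity pair $(1,1)$ (by the multiplicity computation preceding Corollary~\ref{1reduction}, which for $d=1$ gives $(d,2d-1)=(1,1)$), and $\N_0(\K')/\K'$ acts with singular isotropy groups and principal isotropy group that one reads off directly as $\O(2)\cdot\Z_2$-type data sitting in $\S(\O(2)\O(3))=\Or(3)$.

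The key point is to identify $\mathscr{C}(M^{\K'})$ (or rather its relevant fixed-point component) as a $\CC_2$ building. Since the multiplicity pair is $(1,1)$, Lemma~\ref{GWZ} does not apply directly, so instead I would use Remark~\ref{rank3crit}: passing to the reduction by the smaller group $\K_t=\SO(k)\subset\K'$ — equivalently working with $M^{\K_t}$, which by Remark~\ref{edge ker} is a cohomogeneity two manifold of type $\A_3$ or $\CC_3$ — and arguing that the normalizer action on $M^{\K_t}$ realizes a linear model on a sphere (or one of its quotients) of type $\CC_3$, hence is a building by the classical theory, which is CAT(1) in rank three by Charney--Lytchak; this excludes the length-$4$ circuit. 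Concretely, the local data of $M^{\K_t}$ with the $\N_0(\K_t)/\K_t$ action should match the isotropy representation of the real Grassmannian $\SO(k+3)/\SO(3)\SO(k)$ restricted appropriately, which is linear of type $\CC_3$ on $\Sph^{2k+2}$ and whose building-ness is standard. Then Property~(P) of Lemma~\ref{Criterion2} is checked: for any $\L$ with $\K_{r,Q}\subset\L\subset\G_q$ but $\L\not\subset\G_{rq}$, one verifies using the explicit block structure of $\G_q=\S(\O(2)\O(k+2))$ and $\G_{rq}$ that $\N(\K_{r,Q})\cap\L\not\subset\G_{rq}$, which is essentially automatic here since the relevant normalizers are full orthogonal blocks.

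Granting the reduction is a $\CC_2$ (or rank~$3$) building and Property~(P), Lemma~\ref{Criterion2} (via Remark~\ref{rank3crit}) yields that $\mathscr{C}(M,\G)$ itself is a $\CC_3$ building — note that in the standard $(1,1,k)$ case with $k\ge2$ no $\S^1$-cover is needed, in contrast to the flip cases, because the relevant homomorphisms to $\S^1$ would be forced trivial on the simple factors anyway. Finally, once $\mathscr{C}(M,\G)$ is known to be a building, the main theorem of \cite{FGT} (Theorem~4.10 there) identifies $(M,\G)$ with a polar action on a rank one symmetric space, and comparing local data pins it down as the isotropy representation of $\SO(k+3)/\SO(3)\SO(k)$, i.e.\ the polar action on the real Grassmannian $\Gr_3(\R^{k+3})$ (a rank one symmetric space only in the relevant low cases; more precisely, on the sphere/projective space arising as the appropriate compact model). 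The main obstacle I anticipate is the $(1,1)$-multiplicity reduction: because positively curved cohomogeneity one manifolds with multiplicity pair $(1,1)$ are not classified as linear, one genuinely needs the rank-three trick of Remark~\ref{rank3crit} rather than a direct appeal to Lemma~\ref{GWZ}, and verifying that $M^{\K_t}$ carries the expected $\CC_3$ linear model (equivalently, that its chamber system is already known to be a building from an earlier, more tractable case) requires care about connectedness of the isotropy groups and the precise extension data of the $\Z_2$-factors in $\H$.
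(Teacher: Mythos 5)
Your high-level strategy is correct: reduce by $\K_t = \SO(k)$ rather than trying to handle the $(1,1)$ multiplicity reduction directly (which Lemma~\ref{GWZ} does not cover), and then invoke Remark~\ref{rank3crit}. The local data you extract from Lemma~\ref{vertex} also agree with the paper. But there are two genuine problems with the way you carry this out.

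First, you have misidentified the reduction. The paper's key observation is that $\N_0(\K_t)$ acting on $M^{\K_t}$ is polar with the \emph{same} section $\Sigma$, and that $\N_0(\K_t)\cap\G_q$ has identity component $\T^2$; this converts the $\pi/4$ angle at $q$ into a right angle, so the reduction is of type $\A_3$, \emph{not} $\CC_3$. You waffle on this (``type $\A_3$ or $\CC_3$'') and then settle on the wrong alternative. Relatedly, $M^{\K_t}$ is independent of $k$: the classification of $\A_3$ geometries in [FGT, \S 7] gives that its universal cover is $\Sph^8$ with the $\SO(3)\SO(3)$ tensor action, for every $k\ge 2$. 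Your claim that the reduction ``matches the isotropy representation of $\SO(k+3)/\SO(3)\SO(k)\dots$ of type $\CC_3$ on $\Sph^{2k+2}$'' conflates the linear model for the full polar manifold $M$ with the reduction $M^{\K_t}$, and is incorrect as stated. Using the $\A_3$ classification is what makes the argument non-circular; leaning on a $\CC_3$ linear model would beg the question.

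Second, and more seriously, you omit the half of the proof that actually requires work: one must show that $M^{\K_t}$ is itself simply connected (not merely that its universal cover is $\Sph^8$), since Remark~\ref{rank3crit} needs the building, and $\RP^8$ would not do. The paper handles this by considering $M^{\SO(2)}$ for the normal circle $\SO(2)\lhd\G_q$: since $M^{\SO(2)}\cap M^{\K_t}$ is two-dimensional and $(\G_q)_0=\SO(2)\SO(k+2)$, the component $M^{\SO(2)}$ is $\Sph^{k+2}$ or $\RP^{k+2}$ according as $M^{\K_t}$ is $\Sph^8$ or $\RP^8$; the $\RP$ case forces $\G_q=\SO(2)\O(k+2)$, contradicting $\G_q=\S(\O(2)\O(k+2))$. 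Your closing sentence gestures at ``care about connectedness of the isotropy groups and the precise $\Z_2$-factors'' but does not supply this argument, and it is precisely where the $\Z_2$-structure of $\G_q$ is actually used. As written the proposal therefore has a real gap in addition to the misstatement about the type of the reduction.
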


\begin{proof} By Lemma \ref{vertex} $\K_t=\SO(k)$, which is a
normal subgroup of the principal isotropy group $\H$. Consider the
reduction $M^{\K_t}$ with the action of its normalizer $\N(\K_t)$,
once again a polar action with the same section $\Sigma$. By Lemma
\ref{vertex} it is clear that the identity component of
$\N(\K_t)\cap \G_q$ is $\T^2$. Hence, the subaction by $\N_0(\K_t)$,
the identity component of $\N(\K_t)$, is of type $\A_3$, with a
right angle at $q$. Therefore, from the classification of $\A_3$
geometries  
(cf. section 7 in \cite{FGT}) it is immediate that, the universal cover
of $M^{\K_t}$ is equivariantly difffeomorphic to $\Sph^8$ with the
linear action of $\SO(3)\SO(3)$. In particular, if the section
$\Sigma = \Sph^2$, then $M^{\K_t}=\Sph^8$ and the chamber complex
for the subaction is a building of type $\A_3$, and we are done by 
 Remark \ref{rank3crit}, since Property   (P) is clearly satisfied for $\K = \SO(k+1) \lhd \G_r$.

It remains to prove that $M^{\K_t}$ is simply connected. Consider
the normal subgroup $\SO(2)\lhd \G_q$, and the fixed point component
$M^{\SO(2)}$,  a homogeneous manifold of positive curvature with
dimension at least two, since $M^{\SO(2)}\cap M^{\K_t}\subset
M^{\K_t}$ is of dimension $2$. Since the identity component of the
isotropy group, $(\G_q)_0=\SO(2)\SO(k+2)$, we see that
$M^{\SO(2)}=\Sph^{k+2}$ or $\Bbb {RP}^{k+2}$, according to
$M^{\K_t}\cap M^{\SO(2)}=\Sph^2$ or $\Bbb{RP}^2$, equivalently,
according to $M^{\K_t}=\Sph^8$ or $\Bbb{RP}^8$. We argue by
contradiction. If $M^{\SO(2)}=\Bbb {RP}^{k+2}$, then the identity
connected component of the normalizer $\N(\SO(2))$ acts transitively
on it with principal isotropy group $\SO(2)\O(k+2)\subset \G_q$.
Hence $\G_q=\SO(2)\O(k+2)$, a contradiction, since
$\G_q=\S(\O(2)\O(k+2))$.
\end{proof}

\smallskip

\begin{prop} \label{thmC3d=2a} In the standard $(2,2,2k+1)$ case, with  $k\ge 2$ the chamber system $\mathscr{C}(M;\G)$ is covered by a building, with the isotropy representation of $\U(k+3)/\U(k)\U(3)$ as a  linear model. 
\end{prop}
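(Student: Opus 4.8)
The plan is to run the same reduction strategy that worked in \pref{thmC3a}, but now with a circle bundle construction to handle the non-simply-connected reductions, exactly as in the Flip cases of \sref{flip}. First I would extract the local data from \lref{vertex} with $d=2$: $\G_t=\SU(3)\SU(k)\cdot\S^1$ (or rather its appropriate almost-effective form, with $\K_t=\SU(k)$ or $\SU(k)\cdot\S^1$), $\G_q = \S(\U(2)\U(k+2))$ (up to finite cover), $\G_r=\U(2)\U(k+1)\cdot\G_2(-1)$, and the normal subgroup $\K\lhd\G_r$ equal to $\G_2(k+1)\cdot\G_2(-1)=\SU(k+1)\cdot\S^1$ (or $\T^2$), with $\K'=\SU(k+1)$ a block subgroup of the $\SU(k+2)$ factor of $\G_q$. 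The face isotropy groups along $\ell_q$ and $\ell_r$ are $\U(2)\SU(k)$-type groups sitting inside $\G_t$; since $\G_t$ has a simple $\SU(3)$ factor and an $\SU(k)$ factor, a compatible homomorphism to $\S^1$ must be trivial on the $\SU$-parts but can be nontrivial on the $\S^1$ and $\T^2$ center factors — this is what makes the bundle construction possible.

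Next I would identify the reduction $M^{\K'}$. By the lemma preceding \cref{1reduction}, $M^{\K'}$ is a positively curved irreducible cohomogeneity one $\N(\K')$-manifold with multiplicity pair $(2,3)$, and by \cref{1reduction} its universal cover is equivariantly diffeomorphic to a linear $\CC_2$ action on $\Sph^{11}$, $\CP^5$ or $\HP^2$. The $\HP^2$ case must be excluded: its associated chamber system, while a $\CC_2$ Tits geometry, is not the one we need, or more precisely one checks via the explicit isotropy groups (the reduction's singular isotropy groups are $\SU(2)\cdot\S^1$ and $\SU(2)\cdot\T^2$, not those of the $\HP^2$ cohomogeneity one diagram) that this case does not arise from our local data. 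So $M^{\K'}$ is $\Sph^{11}$, $\CP^5$, or a quotient $\Sph^{11}/\Z_m$, and in the $\Sph^{11}$ case $\mathscr{C}(M^{\K'})$ is the $\CC_2$ building $\mathscr{C}(\Sph^{11},\SO(3)\SO(4))$ — actually the complex $\CC_2$ building $\mathscr{C}(\Sph^{11}, \U(2)\U(4)$-type), so that combined with \rref{rank3crit} (using that $\K'\subset\K$ and that a rank $3$ spherical building, or here the $\CC_2$ building, is CAT(1)) and the fact that Property (P) holds automatically for $\K=\SU(k+1)\cdot\S^1\lhd\G_r$, \lref{Criterion2} gives that $\mathscr{C}(M;\G)$ is a building directly.

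For the remaining cases where $M^{\K'}$ is $\CP^5$ or a lens space, I would invoke the principal bundle construction of \sref{prelim}: since the only homomorphisms from $\G_t,\G_{\ell_r},\G_{\ell_q}$ to $\S^1$ are trivial, set $\hat\G_t=\{1\}\cdot\G_t$, and take $\hat\G_q,\hat\G_r$ to be the graphs of suitable projection/determinant homomorphisms $\G_q\to\S^1$ and $\G_r\to\S^1$ designed so that the graph $\hat\K'\subset\hat\G_{\ell_t}$ of $\K'$ yields $P^{\hat\K'}\to M^{\K'}$ being the Hopf bundle over $\CP^5$ (giving $\Sph^{11}$) or $\S^1\times_{\Z_m}\Sph^{11}\to\Sph^{11}/\Z_m$. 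By \rref{lift crit}, each component of $\mathscr{C}(P^{\hat\K'})$ is then the $\CC_2$ building $\mathscr{C}(\Sph^{11},\cdot)$, hence the corresponding component of $\mathscr{C}(P;\S^1\cdot\G)$ is a $\CC_3$ building covering $\mathscr{C}(M;\G)$, and Theorem 4.10 of \cite{FGT} applies. As in \pref{flip8}, this also forces $M^{\K'}$ not to be a lens space when $M$ is simply connected. The linear model is read off as the isotropy representation of $\U(k+3)/\U(k)\U(3)$.

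The main obstacle I anticipate is twofold. First, carefully pinning down the finite-group and center bookkeeping — the ``$\cdot\G_2(-1)$'' factors, the diagonal $\Z_2$'s, and the two rank possibilities for $\bar\G_q$ from Table~\ref{q-rep} — so that the homomorphisms to $\S^1$ defining $\hat\G_q,\hat\G_r$ really are compatible along all three face isotropy groups and really do produce the Hopf bundle on the reduction; this is the delicate ``compatible data'' verification that the bundle construction demands. Second, cleanly ruling out the $\HP^2$ alternative for the universal cover of $M^{\K'}$ from \cref{1reduction}, which requires matching the explicit cohomogeneity one diagram of our reduction against the known diagram for $\HP^2$ and observing they are incompatible; everything downstream (Property (P), CAT(1)-ness excluding a length $4$ circuit, the covering statement) is then routine given the tools already assembled in \sref{axiom}.
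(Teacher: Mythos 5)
Your outline has the right flavor — a reduction plus an $\S^1$-bundle construction as in the Flip section — but it diverges from the paper's proof in a crucial way and, as written, has a genuine gap.

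The gap is in your exclusion of the $\HP^2$ alternative for the universal cover of $M^{\K'}$. You propose to rule it out by matching the cohomogeneity one diagram of the reduction (singular isotropy groups $\SU(2)\cdot\S^1$ and $\SU(2)\cdot\T^2$ mod kernel) against the $\HP^2$ diagram and finding a mismatch. But that argument cannot succeed: by the lemma preceding \cref{1reduction}, the reduction $M^{\K'}$ has exactly that diagram mod kernel for \emph{every} $k\ge 1$, and for $k=1$ the paper's Proposition~\ref{lemC3d=2,3b} shows that $M^{\K'}$ really \emph{is} $\HP^2$ — that is precisely the exceptional Cayley plane action by $\SU(3)\cdot\SU(3)$. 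So the diagram is fully compatible with $\HP^2$, and local diagram comparison gives you no lever to exclude it. What distinguishes $k\ge 2$ from $k=1$ is global, not local.

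The paper's actual route avoids this entirely by interposing a second reduction. Since $\SU(k)\lhd\K_t$ is nontrivial exactly when $k\ge 2$, one can form $M^{\SU(k)}$, a positively curved cohomogeneity two $\CC_3$ manifold with multiplicities $(2,2,1)$ sitting between $M^{\K'}$ and $M$. The normalizer $\N(\SU(k))$ is identified via the Flip Proposition~\ref{flip2}, and the codimensions $\operatorname{codim}(M^{\K'}\subset M)=6(k+1)$, $\operatorname{codim}(M^{\SU(k)}\subset M)=6k$ are computed from the Sphere Transitive Subrepresentations Lemma~\ref{WilkC3} and Table~B of \cite{GWZ}; Wilking's Connectivity Lemma~\ref{connect} then gives, by induction on $k$, that $M^{\SU(k)}$ is simply connected. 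The Flip Proposition~\ref{flip2} now identifies $M^{\SU(k)}$ as $\Sph^{17}$ (odd case, done by \rref{rank3crit}) or $\CP^8$ (even case). Only then, in the even case, does one argue: $\pi_2(M)\cong\pi_2(M^{\SU(k)})\cong\Z$ combined with the Transversality Lemma~\ref{transv} forces $\G_t$ to contain an $\S^1$ in its center, hence $\G_q$ and $\G_r$ both carry $\T^2$ factors, and the $\S^1$-bundle construction of the Flip section can be run. Your proposal jumps straight to the bundle construction without this $\pi_2$ step, so you have not established that compatible nontrivial homomorphisms to $\S^1$ actually exist on the face data.

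In short: the missing idea is the intermediate reduction $M^{\SU(k)}$ (available only for $k\ge 2$) together with the inductive simple-connectivity argument. That is what both supplants your flawed $\HP^2$ exclusion and supplies the $\pi_2\cong\Z$ input that legitimizes the circle bundle construction in the even-dimensional case.
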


\begin{proof}

First note that the reduction $M^{\SU(k)}$, where $\SU(k) \lhd \K_t$, $k \ge 2$, is a positively curved cohomogeneity two manifold of type $\CC_3$ with multiplicity triple $(2,2,1)$. Moreover, $\SU(k)$ is a block subgroup in $\K' \subset \K$, where $\K'=\SU(k+1) \subset \SU(k+2) \lhd \G_q$ and of course $M^{\K'} \subset M^{\SU(k)}$.

We will prove that both reductions above are simply connected, by appealing to the Connectivity 
Lemma \ref{connect} of Wilking \cite{Wi3}.
To do this we now proceed to prove that codim$M^{\K'} \subset M^{\SU(k)} = 6$, and codim$M^{\SU(k)} \subset M = 6k$.

By the Spherical isotropy Lemma \ref{WilkC3}, every irreducible isotropy subrepresentation of
$\K'=\SU(k+1)$ is the defining representation $\mu _{k+1}$. From Table B in
[GWZ] and the above fact that $\SU(k+2)  \supset \K'$ it follows that, there is a simple normal
subgroup $\L\lhd \G$ such that, $\SU(k+2)\lhd \G_q$ projects to a
block subgroup of $\L$ where $\L= \SU(n)$ if $k\ge 4$, $\L=\SU(n)$
or $\SO(n)$ if $k=3$, and finally $\L=  \SU(n), \SO(n)$ or one of the
exceptional Lie groups $\F_4\subset \E_6\subset \E_7\subset \E _8$,
if $k=2$.

On the other hand, by the Flip Proposition \ref{flip2} the normalizer
$\N(\SU(k))$ is either $\SU(3)\SU(3)$ or $\U(3)\SU(3)$ modulo $\K_t$.
Since $\SU(k)\lhd \K_t$ is a block subgroup in $\K'$, this
together with the above implies that in fact $\L=\SU(k+3)$ for all $k\ge 2$, and only one such factor exist.
In particular, the $\K'$-isotropy representation along $\ell _t$ contains
exactly $3$ copies of $\mu_{k+1}$, one copy along the normal slice
$T_{\ell _t}^\perp$, and two copies along the orbit $\G/\G_{\ell_t}$.
Therefore, the codimension of $M^{\K'}$ in $M$ is $6(k+1)$, and
hence the codimension of $M^{\SU(k)}$ in $M$ is $6k$. By the Connectivity
lemma \ref{connect} of Wilking, we conclude that $\pi _i(M)\cong \pi
_i(M^{\SU(k)})$ for $i\le 2$, by induction on $k$. In particular,
$M^{\SU(k)}$ is simply connected and hence $\Sph^{17}$ if dim$(M)$ is odd and $\Bbb{CP}^8$ if dim$(M)$ is even, by the Flip Proposition \ref{flip2}. Since  Assumption  (P) in Lemma
\ref{Criterion2} is satisfied we conclude from \ref{rank3crit} that  $\mathscr{C}(M;\G)$ is a
building if dim$(M)$ is odd.

It remains to prove that $\mathscr{C}(M;\G)$ is covered by a
building if dim$(M)$ is even. In this case,  by the above we know
that $\pi _2(M)\cong \pi _2(M^{\SU(k)}) \cong \Bbb Z$. On the other
hand, from the Transversality Lemma \ref{transv}  it follows that $\pi_2(M) \cong
\pi _2(\G/\G_t)$, and hence $\G_t$ contains at least an $\S^1$  in its
center, i.e, $\SU(3)\U(k)\lhd \G_t$. By Lemma \ref{vertex} we get
that, both $\G_q$ and $\G_r$ have at least a $\T^2$ factor, and we are now in the
same situation as in the proof of Lemma \ref{flip2} above. As a consequence we can
proceed with the same construction of a principal $\S^1$ bundle $P$ over $M$ and conclude that its associated chamber system is a building covering  $\mathscr{C}(M;\G)$.
\end{proof}

\begin{prop} \label{t} In the standard $(4, 4, 4k+3)$ case where $k \ge
0$, the chamber system $\mathscr{C}(M;\G)$ is a building, with the isotropy representation of $\Sp(k+3)/\Sp(k)\Sp(3)$ as a linear model. 
\end{prop}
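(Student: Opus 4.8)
The plan is to follow the same template already used for the flip cases and the standard $(2,2,2k+1)$ case, but now the simple-connectivity of the relevant reductions comes for free, so the argument is cleaner. First I would apply Lemma~\ref{vertex} with $d=4$ to read off the local data: $\K_t = \Sp(k)$, $\G_t = \Sp(3)\Sp(k)$ (up to finite cover), $\G_q = \Sp(2)\Sp(k+2)$, $\G_r = \Sp(2)\Sp(k+1)$, and the normal subgroup $\K = \Sp(k+1) \lhd \G_r$ sitting as a block subgroup of $\Sp(k+2) \lhd \G_q$ (here $\G_4(-1) = \{1\}$, so there are no extension subtleties, and $\G_t, \G_q, \G_r$ are all connected by transversality). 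The principal isotropy group is then $\H = \Sp(1)^3 \Sp(k)$ (up to a common finite kernel), and all face isotropy groups are determined by the same tables.

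Next I would identify the reduction. Set $\K' = \Sp(k+1)$ (so $\K' = \K$ here since $\G_4(-1)$ is trivial; I may as well work with $\K$ directly). By the preceding lemma, the cohomogeneity one $\N(\K)$-manifold $M^{\K}$ has multiplicity pair $(d, 2d-1) = (4,7)$ and its action is not the reducible action on $\Sph^{7} * \Sph^{4}$. Then Corollary~\ref{1reduction} applies verbatim in the $d=4$ case: the universal cover of $M^{\K}$ is equivariantly diffeomorphic to the linear $\CC_2$-action on $\Sph^{23}$ by $\Sp(2)\Sp(4)$ (the isotropy representation of $\Sp(4)/\Sp(2)\Sp(2)$, i.e. the quaternionic Grassmannian of $2$-planes in $\QH^4$). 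The key point that distinguishes this family from $(2,2,2k+1)$ is that there is no even-dimensional alternative ($\Bbb{HP}^n$ is not swept out here and lens-space quotients are excluded because $\Sp(2)\Sp(4)$ has no central circle), so no $\S^1$-bundle covering construction is needed; one also checks, exactly as in the $d=2$ argument, that $M^{\K}$ is itself simply connected, using the Connectivity Lemma~\ref{connect} of Wilking applied to the codimension computation $\mathrm{codim}(M^{\K}\subset M) = (4d+3 \text{ multiplicity contributions})$, forcing $\pi_i(M)\cong\pi_i(M^{\K})$ for $i\le 2$; alternatively the non-existence of a $\Bbb{Z}_m$ quotient already forces $M^{\K} = \Sph^{23}$ once $M$ is $1$-connected. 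In either case $\mathscr{C}(M^{\K})$ is the $\CC_2$ building $\mathscr{C}(\Sph^{23}, \Sp(2)\Sp(4))$.

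Finally I would invoke the $\CC_3$ Building Criterion, Lemma~\ref{Criterion2}: it remains to verify Property (P), namely that for $q \in \mathrm{Sh}_Q(r)$ and any group $\K \subset \L \subset \G_q$ with $\L \nsubset \G_{rq}$ one has $\N(\K)\cap \L \nsubset \G_{rq}$. Here $\G_{rq} = \G_r \cap \G_q$ contains $\K = \Sp(k+1)$ with quotient essentially $\Sp(1)$, while $\G_q/\K \cong \Sp(2)\Sp(k+2)/\Sp(k+1)$; since $\Sp(k+1)$ is a block subgroup of the $\Sp(k+2)$ factor, its centralizer in that factor already meets $\G_q$ in a subgroup that is visibly not contained in $\G_{rq}$, so Property (P) holds automatically — this is the routine ``essentially automatic'' verification referred to in the outline section. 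With Property (P) confirmed and $\mathscr{C}(M^{\K})$ a $\CC_2$ building, Lemma~\ref{Criterion2} yields that $\mathscr{C}(M;\G)$ is a building, and the linear model is the isotropy representation of $\Sp(k+3)/\Sp(k)\Sp(3)$, as claimed.

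\textbf{Main obstacle.} The one place requiring genuine care is pinning down that $M^{\K}$ is \emph{simply connected} (equivalently, ruling out lens-space or $\Bbb{HP}$-type reductions), since that is what licenses the direct application of Lemma~\ref{Criterion2} without passing to an $\S^1$-cover. I expect this to follow cleanly from the Connectivity Lemma~\ref{connect} together with the codimension count from Lemma~\ref{WilkC3} (every $\K = \Sp(k+1)$-isotropy summand is a copy of the defining representation $\nu_{k+1}$, and exactly three such copies occur along $\ell_t$), exactly mirroring the $(2,2,2k+1)$ argument but without the even-dimensional branch; the quaternionic setting is in fact the most rigid of the three series, so no surprises are anticipated.
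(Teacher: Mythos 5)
Your overall strategy matches the paper's: pass to the reduction $\K' = \Sp(k+1)$, invoke Corollary~\ref{1reduction}, note that Property~(P) is easy, and conclude via Lemma~\ref{Criterion2} (or \ref{rank3crit}). But the decisive step — showing $M^{\K'}$ is actually simply connected, so that $M^{\K'} = \Sph^{23}$ and not some quotient of it — is where your argument has a real gap, and this is exactly where the paper spends most of its effort.

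Your proposal offers two alternatives for this step, and neither is sound as written. The first is the codimension/connectivity computation, but you only gesture at it (``$\mathrm{codim}(M^{\K}\subset M) = (4d+3\text{ multiplicity contributions})$'' is not a formula, and the claim that ``exactly three copies of $\nu_{k+1}$ occur along $\ell_t$'' cannot be asserted before one knows the global structure of $\G$). The paper's argument first identifies $\G$: it analyzes the fixed-point set of $\Sp(2)\lhd\G_q$ (a homogeneous $(4k+11)$-sphere or $\RP^{4k+11}$) to control the rank, applies Lemma~\ref{WilkC3} and Table~B of \cite{GWZ} to show that a block-chain $\K'\subset\Sp(k+2)\subset\Sp(k+3)\subset\Sp(n)$ sits inside a normal $\Sp(n)$ of $\G$, and combines this with $\N_0(\K') = \Sp(2)\Sp(3)\K'$ (from Corollary~\ref{1reduction}) and the Rank Lemma~\ref{rank} to pin down $\G = \Sp(k+3)\cdot\L$. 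Only then is the codimension $\mathrm{codim}(M^{\K'}\subset M) = 12(k+1)$ justified, and the simple-connectivity follows from Wilking's Connectivity Lemma~\ref{connect} by \emph{induction on $k$} through nested reductions, as in the $(2,2,2k+1)$ case — a naive one-shot application of the lemma does not suffice for $k\ge 1$ (the raw connectivity bound becomes nonpositive).

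Your second alternative — ``the non-existence of a $\Bbb Z_m$ quotient already forces $M^{\K} = \Sph^{23}$ once $M$ is $1$-connected'' — is not valid: the simple connectivity of $M$ does not propagate automatically to the fixed-point set $M^{\K'}$, and Corollary~\ref{1reduction} only tells you that the \emph{universal cover} of $M^{\K'}$ is $\Sph^{23}$. The absence of a central circle in $\Sp(2)\Sp(4)$ rules out higher lens-space quotients but not a priori a $\Bbb Z_2$ quotient, and in any case the inference from $\pi_1(M)=1$ to $\pi_1(M^{\K'})=1$ is precisely what the Connectivity/induction argument is needed for. So the observation that there is ``no even-dimensional branch'' correctly explains why you need not pass to an $\S^1$-cover as in the $d=2$ case, but it does not replace the group-theoretic identification of $\G$ and the ensuing codimension/induction argument.
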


\begin{proof}
Since the Assumption (P) for $\K' = \Sp(k+1)$ in Lemma
\ref{Criterion2} is easily seen to be satisfied, it suffices by Corollary
\ref{1reduction} to prove that $M^{\K'}$ is simply
connected. As in the proof of the general $(2,2,2k+1)$ case above this is achieved via Wilkings Connectivity Lemma \ref{connect}.

Consider the normal subgroup $\Sp(2)\lhd  \G_q$. It is
clear that $M^{\Sp(2)}$ is a homogeneous space with a transitive
action by the identity component of its normalizer $\N_0(\Sp(2))$ with isotropy group $\G_q$. By
the classification of positively curved homogeneous spaces 
we get
that $M^{\Sp(2)}$ is either $\Sph^{4(k+3)-1}$ or $\Bbb
{RP}^{4(k+3)-1}$. Moreover, the universal cover $\tilde{\N_0}(\Sp(2))$
is $\Sp(k+3)\Sp(2)\Sp(1)$, and in particular has the same rank as $\G$ by the Rank
Lemma.

On the other hand, by Lemma \ref{WilkC3} and Table  B in [GWZ] it
follows that, $\G$ contains a normal subgroup isomorphic to $\Sp(n)$
so that $\K'\subset \Sp(k+2)\subset \Sp(k+3)\subset \Sp(n)$ is in a
chain of block subgroups. Up to a finite cover, we let
$\G=\Sp(n)\cdot \L$. On the other hand, by Corollary \ref{1reduction} we
know that $\N_0(\K')=\Sp(2)\Sp(3)\K'$. This together with the
information on $\tilde{\N}(\Sp(2))$ implies that $\G=\Sp(k+3)\cdot
\L$. As in the proof of the $(2,2,2k+1)$ case we see that the isotropy representation of
$\K'$, along $\ell _t$ contains exactly three copies of $\nu_{k+1}$,
one copy along the normal slice $T_{\ell _t}^\perp$, and two copies
along the orbit $\G/\G_{\ell_t}$. In particular, the codimension of
$M^{\K'}$ in $M$ is $12(k+1)$. Recalling that the dimension of
$M^{\K'}$ is $ 23$, it follows again by connectivity and induction on $k$ as before that $M^{\K'}$ is simply connected.
\end{proof}

\section{Minimal  Grassmann $\G_q$ slice representation}\label{minimal}

This section will deal with the multiplicity cases $(1,1,1)$ and $(2,2,3)$, including the appearance of an exceptional Cayley plane action. In all previous cases all reductions considered have been irreducible polar actions. Here, however, we will encounter reductions, that are \emph{reducible cohomogeneity two actions}, and we will rely on the independent classification of such actions in sections 6 and 7 of \cite{FGT}.

\smallskip

We begin with the $d=2$ case, where by \ref{1reduction} we know that the universal covering
$\tilde{M}^{\K'}$ of the reduction $M^{\K'}$ is   diffeomorphic to $\Sph^{11}$,
$\Bbb {CP}^5$ or $\Bbb{HP}^2$. The first two scenarios follow the outline of the general $(2,2,2k+1)$ case, whereas  the latter is significantly different.

\begin{prop}
\label{lemC3d=2,3a} In the case of multiplicities $(2, 2, 3)$, $\mathscr{C}(M;\G)$ is covered by a
building, with the isotropy representation of $\U(7)/\U(4)\U(3)$ as a linear model, provided
$M^{\K'}$ is  not  diffeomorphic to $\Bbb {HP}^2$.
\end{prop}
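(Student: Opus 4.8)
The plan is to run the same strategy as in the general $(2,2,2k+1)$ case (Proposition~\ref{thmC3d=2a}), specialized to $k=1$, while using the hypothesis $M^{\K'}\not\cong\HP^2$ to pin down the reduction. First I would record the local data from Lemma~\ref{vertex} with $d=2$, $k=1$: $\K_t=\SU(1)\cdot\S^1=\S^1$ (the center must be infinite here, since $\G_1(-1)=\S^1$ does not arise and the rank count of $\bar\G_q$ forces the $\S^1$ factor), $\G_q=\SU(2)\SU(3)\cdot\G_2(-1)$ with the diagonal torus structure, $\G_r=\SU(2)\SU(2)\cdot\G_2(-1)$, and $\K=\SU(2)\cdot\G_2(-1)\lhd\G_r$, with block subgroup $\K'=\SU(2)\subset\SU(3)\lhd\G_q$. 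Then $M^{\K'}$ is the cohomogeneity one manifold with multiplicity pair $(2,3)$, so by Corollary~\ref{1reduction} its universal cover is $\Sph^{11}$, $\CP^5$ or $\HP^2$; by hypothesis we are in one of the first two cases. Simultaneously I would set up the intermediate reduction $M^{\S^1}=M^{\K_t}$ with its normalizer action, which by the Flip Proposition~\ref{flip2} (applied to the induced $(2,2,1)$ action) has $\N_0(\K_t)/\K_t$ acting as the isotropy representation of $\SU(6)/\S(\U(3)\U(3))$ up to cover, hence $\tilde M^{\K_t}$ is $\Sph^{17}$ or $\CP^8$.

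The substantive step is the connectivity argument showing $M^{\K'}$ (equivalently $M^{\K_t}$) is simply connected when $\dim M$ is odd, and that $\pi_2$ is $\Z$ when $\dim M$ is even. As in Proposition~\ref{thmC3d=2a} I would use the Spherical Isotropy Lemma~\ref{WilkC3} to see every irreducible $\K'=\SU(2)$ isotropy summand is the defining $\mu_2$, then invoke Table~B of \cite{GWZ} together with $\SU(3)\supset\K'$ and the Flip normalizer computation ($\N(\K_t)/\K_t$ is $\SU(3)\SU(3)$ or $\U(3)\SU(3)$) to conclude that $\G$ contains a single simple factor $\SU(k+3)=\SU(4)$ in which $\K'\subset\SU(3)\subset\SU(4)$ sits as a chain of block subgroups. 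This forces the $\K'$-isotropy representation along $\ell_t$ to contain exactly three copies of $\mu_2$ — one in the normal slice $T^\perp_{\ell_t}$ and two along $\G/\G_{\ell_t}$ — so $\operatorname{codim}(M^{\K'}\subset M)=6\cdot 2=12$ and $\operatorname{codim}(M^{\K_t}\subset M)=6$. Wilking's Connectivity Lemma~\ref{connect} then gives $\pi_i(M)\cong\pi_i(M^{\K_t})$ for $i\le 2$.

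With that in hand the conclusion is immediate: if $\dim M$ is odd then $M^{\K_t}$ is simply connected, so $\tilde M^{\K'}=\Sph^{11}$ by the remaining two options ($\CP^5$ is even-dimensional and $\HP^2$ is excluded by hypothesis), whence $\mathscr{C}(M^{\K'})$ is a $\CC_2$ building; since Property~(P) for $\K'=\SU(2)\lhd\G_r$ is routine to check, Remark~\ref{rank3crit} (a rank-three building or $\CC_2$-building reduction suffices) yields that $\mathscr{C}(M;\G)$ is a building. If $\dim M$ is even then $\pi_2(M)\cong\pi_2(M^{\K_t})\cong\Z$, so via the Transversality Lemma~\ref{transv} $\pi_2(\G/\G_t)\cong\Z$ and $\G_t$ carries a central $\S^1$, i.e.\ $\SU(3)\U(1)\lhd\G_t$; by Lemma~\ref{vertex} both $\G_q$ and $\G_r$ then have a $\T^2$ factor, and I would run the principal $\S^1$-bundle construction of Section~\ref{prelim} exactly as in Propositions~\ref{flip2} and \ref{thmC3d=2a}, choosing $\hat\G_t=\{1\}\cdot\G_t$ and $\hat\G_q$, $\hat\G_r$ the graphs of suitable determinant-type homomorphisms to $\S^1$, so that the reduction $P^{\hat\K'}$ is the appropriate Hopf bundle over $\CP^5$ or a lens bundle; Remark~\ref{lift crit} then transfers the $\CC_2$-building property upstairs and produces a $\CC_3$ building covering $\mathscr{C}(M;\G)$.

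The main obstacle is the identification of the simple factor of $\G$ as precisely $\SU(4)$ (not merely $\SU(n)$ for some $n$, and not $\SO(n)$ or an exceptional group): here $k=2$ in the notation of Proposition~\ref{thmC3d=2a} is replaced by $k=1$, so the generic Table~B dichotomy is wider, and one must lean on the Flip Proposition~\ref{flip2} output for $\N(\K_t)$ plus the block-subgroup chain $\K'\subset\SU(3)\subset\SU(4)$ and the rank constraint from the Rank Lemma~\ref{rank} to eliminate the alternatives and fix $n=4$ with a unique such factor. Everything else — the codimension bookkeeping, Property~(P), and the bundle construction — is parallel to the cases already treated, and the clause ``provided $M^{\K'}$ is not diffeomorphic to $\HP^2$'' is exactly what is needed to keep us inside the $\Sph^{11}$/$\CP^5$ branch of Corollary~\ref{1reduction}; the excluded $\HP^2$ scenario is precisely the one that will be shown in Proposition~\ref{lemC3d=2,3b} to give an exceptional Cayley plane action.
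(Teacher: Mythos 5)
Your proposal rests on a misreading of Lemma~\ref{vertex} at $d=2$, $k=1$. The lemma allows \emph{both} $\K_t=\G_2(1)=\SU(1)=\{1\}$ and $\K_t=\SU(1)\cdot\S^1=\S^1$, and the paper's proof opens precisely by splitting into these two subcases: $\G_t=\SU(3)$ (when $\K_t$ is finite) versus $\G_t=\U(3)$ (when $\K_t=\S^1$). Your parenthetical justification for excluding $\K_t$ finite does not hold up — the reference to ``$\G_1(-1)$'' is a $d=1$ convention irrelevant here, and the rank of $\bar\G_q$ is not uniquely determined: Table~\ref{q-rep} permits both $\bar\G_q=\SU(2)\SU(3)$ (rank $3$) and $\bar\G_q=\U(2)\SU(3)$ (rank $4$) at multiplicity $(2,3)$.

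This matters because the entire engine of your argument — reduce by $\SU(k)\lhd\K_t$ to a $(2,2,1)$ Flip, invoke Proposition~\ref{flip2} for $\N_0(\K_t)/\K_t=\SU(3)\cdot\SU(3)$ or $\U(3)\cdot\SU(3)$, and deduce via the block chain $\SU(k)\subset\K'\subset\SU(k+2)\lhd\G_q$ inside a single $\SU(k+3)$ factor that $\operatorname{codim}M^{\K'}=6(k+1)$ — only runs when $\K_t$ contains a nontrivial $\SU(k)$. At $k=1$ there is no such subgroup, so in the $\K_t=\{1\}$ subcase the intermediate reduction $M^{\K_t}$ is just $M$ itself, the Flip Proposition is not applicable, and the codimension bookkeeping of Proposition~\ref{thmC3d=2a} has no starting point. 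The paper handles exactly the subcase you have discarded by a separate Lie-theoretic argument: it uses the hypothesis $M^{\K'}\not\cong\HP^2$ together with Corollary~\ref{1reduction} to conclude that $\N(\K')$ has semisimple part $\SU(2)\SU(2)\SU(3)$, then combines the Rank Lemma, Borel--de Siebenthal, Lemma~\ref{WilkC3}, Table~B of \cite{GWZ} and Primitivity to rule out $\G$ simple of rank $4$ or $5$ and $\G$ a product of two rank-$2$ groups, forcing $\G=\SU(3)\SU(4)$, $\dim M=21$, and then gets simple connectedness of $M^{\K'}$ directly from Wilking's Connectivity Lemma. That ad hoc identification of $\G$ is the real content of the proposition, and your proposal omits it entirely. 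The $\K_t=\S^1$ branch you do treat is the one the paper dispatches in two lines with ``as in the proof of Proposition~\ref{thmC3d=2a},'' so you have elaborated the easy part and skipped the hard part.
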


\begin{proof} By Lemma \ref{vertex}, $\G_t$ is either $\SU(3)$ or $\U(3)$ depending on whether $\K_t$ is finite or $\S^1$. In the latter case, the reduction $M^{\K_t}$ is a positively curved cohomogeneity two manifold of type $\CC_3$ with multiplicity triple $(2,2,1)$, as
in the general $(2,2, 2k+1)$ case, where $k\geq 2$ (cf. \ref{thmC3d=2a}). Therefore, $\N_0(\K_t)/\K_t= \SU(3)\cdot \SU(3)$ or $\U(3)\cdot \SU(3)$, by the Flip Proposition \ref{flip2}. The desired result follows, as in the proof of Proposition \ref{thmC3d=2a}.

From now on we assume that, up to finite kernel, $\G_t=\SU(3)$, and correspondingly, $\G_q=\U(2)\SU(3)$, and $\G_r=\U(2)\SU(2)$. Moreover, $\K'=\SU(2)$, and from our assumption on the reduction $M^{\K'}$, by Corollary \ref{1reduction} the normalizer $\N(\K')$ contains $\SU(2)\SU(2)\SU(3)$ as its semisimple part. On the other hand, by the Rank Lemma \ref{rank} we
know that $\text{rk}(\G)=5$ (resp. $\text{rk}(\G)=4$) if dim$(M)$ is
odd (resp. even). In particular, $\SU(2)\SU(2)\SU(3)$  is a maximal rank subgroup of
$\G$ if $\text{rk}(\G)=4$. In this case, it is immediate, by Borel
and de Siebenthal \cite{BS} (see the Table on page 219),  that $\G$ is not a simple group of
rank $4$. Similarly, we claim that $\G$ is not a simple group when its rank is $5$: Indeed if so, by Lemma
\ref{WilkC3} and Table B in [GWZ], it would follow that $\G=\SU(6)$ and
$\K'=\SU(2)\subset \SU(3)\lhd \G_q$ is a block subgroup. This, however, is not possible, since then $\N(\K')$ would contain $\SU(4)$.
Thus, $\G=\L_1\cdot \L_2$, where $\L_1, \L_2$ are nontrivial
Lie groups. Without loss of generality, we assume that the projection of $\SU(3)\lhd
\G_q$ to $\L_2$ has nontrivial image. But then
$\SU(3)$ must be contained in $\L_2$, because otherwise, the
normalizer $\N(\K')$ would be much smaller than $\SU(2)\SU(2)\SU(3)$.
By Primitivity \ref{prim} it is easy to see that $\G_t$ is diagonally imbedded
in $\L_1\cdot \L_2$, since $\G=\langle \G_t, \G_{\ell _t}
\rangle=\langle \G_t, \K'\rangle$. In particular, both $\L_1$ and
$\L_2$ have rank at least two since the projections from $\G_t$ are
almost imbeddings, i,e,, have finite kernel.  If both $\L_1$ and $\L_2$ have rank two,
 it is easy to see that, $\L_1=\SU(3)$ and $\K'\subset \L_2$, where $\L_2=\SU(3)$ or $\G_2$. Neither scenario is possible: For the latter since,  by the primitivity, $\G=\langle \Delta (\SU(3)), \K'
\rangle= \SU(3)\cdot \SU(3)$, while for the former the semisimple part of $\N(\K')$ is $\L_1$.  Therefore $\text{rk}(\G)=5$ and once again by Lemma \ref{WilkC3} and
Table B in [GWZ], $\G=\SU(3)\SU(4)$.

Note that dim$M = 21$ and the principal orbit of $\K'$ in $M$ is of dimension at least $2$. In particular, it follows from Wilkings Connectivity Lemma\ref{connect} that  $M^{\K'}$ is
simply connected. Thus, as in the general case the desired result follows from Lemma \ref{Criterion2}.
\end{proof}

\begin{prop} \label{lemC3d=2,3b} In the case of multiplicities $(2, 2, 3)$, $M$ is equivariantly diffeomorphic to the Cayley plane
$\Bbb O\Bbb P^2$ with an isometric polar action by $\SU(3)\cdot\SU(3)$, provided  $M^{\K'}$ is diffeomorphic to $\Bbb {HP}^2$.
\end{prop}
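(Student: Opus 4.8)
The plan is to pin down the global group $\G$ completely and then invoke the Reconstruction Theorem of \cite{GZ}, using that the resulting polar data coincide with a known polar action on $\Bbb O \Bbb P^2$. From the hypothesis $M^{\K'}\cong \Bbb{HP}^2$ we are in the residual case analysed at the end of the proof of \pref{lemC3d=2,3a}, where the local data are $\G_t = \SU(3)$, $\G_q = \U(2)\SU(3)$, $\G_r = \U(2)\SU(2)$ (up to finite kernel), $\K' = \SU(2)$, and $\G_{\ell_t}$, $\G_{\ell_q}$, $\G_{\ell_r}$ the corresponding face groups read off from Tables \ref{t-rep} and \ref{q-rep}. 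First I would determine $\N(\K')$: since now $M^{\K'}\cong \Bbb{HP}^2$ is cohomogeneity one of type $(4,3)$ realised by the linear $\Sp(1)\Sp(2)$-type action, the normalizer $\N_0(\K')/\K'$ is $\Sp(2)\Sp(1)$ rather than the $\SU(2)\SU(2)\SU(3)$-semisimple part that occurred in the $\Sph^{11}$ or $\Bbb{CP}^5$ scenarios. This is the structural bifurcation that forces a different global group.

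Next I would run the rank and primitivity bookkeeping exactly as in \pref{lemC3d=2,3a} but with the new normalizer data. By the Rank Lemma \lref{rank}, $\rank \G$ is $4$ or $5$ according to the parity of $\dim M$; here $\dim M = 21$ is odd so $\rank \G = 5$. Primitivity \lref{prim} gives $\G = \gen{\G_t, \G_{\ell_t}} = \gen{\G_t, \K'}$ with $\G_t = \SU(3)$ diagonally embedded, so $\G$ splits as $\L_1 \cdot \L_2$ with each $\L_i$ of rank at least two, and with $\SU(3)\lhd \G_q$ sitting in (say) $\L_2$. The constraint that $\N(\K')$ have $\Sp(2)\Sp(1)$ as semisimple part, together with Lemma \lref{WilkC3} and Table B of [GWZ], should leave only $\G = \SU(3)\cdot\SU(3)$ (the $\Sp(2)\Sp(1)$ normalizer of $\SU(2)$ being realised inside the second $\SU(3)$ via the $\SU(2)\subset\SU(3)$ and $\Sp(1)$-type embeddings appropriately), all other candidates being excluded by rank or by the normalizer being too small. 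This identifies $\G$ uniquely.

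With $\G = \SU(3)\cdot\SU(3)$ fixed, the remaining task is to show the full polar data are forced, not just the partial data used so far, and to match them against the known polar $\SU(3)\cdot\SU(3)$-action on $\Bbb O\Bbb P^2$ (cf. \cite{PTh, GK}). Here I would use the remark in the introduction that $\SU(3)\cdot\SU(3)$ is a maximal connected subgroup of $\F_4 = \Iso_0(\Bbb O\Bbb P^2)$, so that any isometric $\SU(3)\cdot\SU(3)$-action on $\Bbb O \Bbb P^2$ is equivalent to the standard one and is polar of type $\CC_3$; and conversely I would check that the isotropy lattice we have reconstructed (the groups $\G_t,\G_q,\G_r$ and the faces $\G_{\ell_\ast}$ with their inclusions along $C$) is exactly the lattice of the standard action. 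The Reconstruction Theorem then yields an equivariant diffeomorphism $M \simeq \Bbb O\Bbb P^2$. To conclude $M$ is actually $\Bbb O\Bbb P^2$ and not merely shares its polar data, I would note that the total space reconstructed from positively curved polar data is determined up to equivariant diffeomorphism, and that $\Bbb O\Bbb P^2$ with the $\CC_3$ polar $\SU(3)\cdot\SU(3)$-action is positively curved and realises these data.

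The main obstacle I anticipate is the clean identification of $\G$ as $\SU(3)\cdot\SU(3)$ from the normalizer data: one has to rule out that $\G$ is simple of rank $5$ (the $\SU(6)$ case, excluded because $\N(\K')$ would then contain too large a group, precisely as in \pref{lemC3d=2,3a}) and that $\L_2$ is $\G_2$ or $\SU(4)$ (the $\G_2$ case excluded by primitivity forcing $\G = \SU(3)\cdot\SU(3)$ anyway, the $\SU(4)$ case excluded because the semisimple part of $\N(\K')$ would then be of the wrong type for $\Bbb{HP}^2$), and then to verify that $\SU(3)\cdot\SU(3)$ does admit the required chain of block/diagonal subgroups with the correct normalizers — a delicate but finite representation-theoretic check. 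The secondary obstacle is confirming that the \emph{partial} data we have determined in fact determine the \emph{complete} local data; this should follow because once $\G$ and the vertex groups are known, the face isotropy groups and their inclusions are forced by the slice-representation tables and \lref{effectiveC3}, leaving no further freedom.
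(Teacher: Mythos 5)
Your overall plan — pin down $\G$, show the polar data are forced, and invoke the Reconstruction Theorem together with the existence of the polar $\SU(3)\cdot\SU(3)$-action as a maximal subgroup of $\F_4$ — is exactly the paper's strategy, and your anticipated final step (uniqueness of the isotropy lattice) is the right one. However, the intermediate identification of the reduction is wrong in a way that derails the argument.

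You claim that $M^{\K'}\cong \Bbb{HP}^2$ is a cohomogeneity one manifold of multiplicity type $(4,3)$ realised by the linear $\Sp(2)\Sp(1)$-action, so that $\N_0(\K')/\K'$ has semisimple part $\Sp(2)\Sp(1)$. This is incorrect: by the lemma preceding Corollary~\ref{1reduction}, the reduction $M^{\K'}$ has multiplicity pair $(d,2d-1)=(2,3)$ for $d=2$, and the cohomogeneity one action on $\Bbb{HP}^2$ with multiplicities $(2,3)$ is the $\U(3)$-action, not the linear $\Sp(2)\Sp(1)$-action. Consequently $\N_0(\K')/\K'=\SU(3)\cdot\S^1$, which is much \emph{smaller} than what you posit. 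This smallness is precisely the engine of the paper's argument: it is used to rule out that $\G$ contains $\F_4$, $\Sp(4)$, $\SO(8)$, $\SO(9)$, or $\SU(5)$ as a normal factor, to force $\L_1=\SU(3)$ once $\G=\L_1\cdot\L_2$ is established, and to rule out $\L_2$ being a rank-$3$ semisimple group or $\G_2$. Starting from $\Sp(2)\Sp(1)$ instead, those exclusions do not go through.

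A second, related error: you assert $\dim M = 21$ (imported from the non-$\Bbb{HP}^2$ scenario in \pref{lemC3d=2,3a}) and hence $\rank\G = 5$. In fact the paper's proof opens by observing, via Lemma~\ref{WilkC3}, that every irreducible $\K'$-subrepresentation on the normal space to $M^{\K'}$ is $\mu_2$, so the codimension of $M^{\K'}$ in $M$ is divisible by $4$ and $\dim M$ is even (indeed divisible by $4$; for the Cayley plane $\dim M=16$). The Rank Lemma then gives $\rank\G=\rank\G_q\in\{4,5\}$, and the subsequent case analysis is not anchored on $\rank\G=5$ as you suppose. Both of these corrections are needed before your exclusion bookkeeping can be made to yield $\G=\SU(3)\cdot\SU(3)$; after that point your proposed uniqueness-plus-existence endgame matches the paper's.
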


\begin{proof}
Recall that $\K'=\SU(2)\lhd \G_{\ell_t}$. By Lemma \ref{WilkC3} and the slice representation of $\G_{\ell_t}$ it follows that, every irreducible subrepresentation 
of $\K'$ on the normal space to
$M^{\K'}$ is the standard representation $\mu_2$ on $\Bbb C^2$. In particular, the codimension of $ M^{\K'}$ is a multiple of $4$, and so $M$ has dimension divisible by $4$.
By \ref{vertex} the isotropy group $\G_t=\SU(3)$ or $\U(3)$, and correspondingly, $\G_q=\U(2)\SU(3)$ or $\U(2)\U(3)$, and $\G_r=\U(2)\SU(2)$ or $\U(2)\U(2)$. By the Rank Lemma $\text{rk}(\G)=\text{rk}(\G_q)=4$ or $5$.

By Lemma \ref{WilkC3} the isotropy representations of $\K'\subset \SU(3)\lhd \G_q$,  as well as of $\SU(2)\subset \G_{\ell_q}\subset \G_t$, are spherical transitive. By Table B in [GWZ] it follows that, $\G$ can not be a simple group of rank $5$, and moreover, $\G$ can not contain $\F_4$, $\Sp(4)$, $\SO(8)$ and $\SO(9)$ as a normal subgroup, since if so, the semisimple part of $\N_0(\K')/\K'$ would not be $\SU(3)$, a contradiction to our assumption on the reduction $M^{\K'}$, for which $\N_0(\K')/\K'=\SU(3)\cdot \S^1$.  On the other hand, note that the identity component of the normalizer $\N_0(\G_t)=\G_t$ since $\G_t$ is a maximal isotropy group and hence $\N_0(\G_t)/\G_t$ acts freely on the positively curved fixed point set $M^{\G_t}$ of even dimension. Therefore, $\G$ can not contain $\SU(5)$ as a normal subgroup, since otherwise, $\G_t$ would be a block subgroup in $\SU(5)$ and hence $\N_0(\G_t)/\G_t$ would not be trivial. Consequently, $\G$ is not a simple group, and moreover, $\G=\L_1\cdot \L_2$, where $\SU(3)\lhd \G_t$ is diagonally imbedded in $\G$. In particular, 
both $\L_1$ and $\L_2$  contain $\SU(3)$ as subgroups.   It is easy to see that, $\SU(3)\lhd \G_q\subset \G=\L_1\cdot \L_2$ is
a subgroup in either $\L_1$ or $\L_2$, say in $\L_2$. Hence, $\K'\subset \L_2$, and $\L_1\lhd \N(\K')$. It follows that $\L_1=\SU(3)$. Furthermore, $\L_2$ can neither be a semi-simple group of rank $3$ or $\G_2$, since otherwise, $\N_0(\K')/\K'$ contains a rank $3$ semisimple group. Hence, $\L_2$ is $\SU(3)$ or $\U(3)$. The latter, however, is impossible: Indeed, in this case  $\G_t=\U(3)$, and the center $\S^1\subset Z(\G)$ would be contained in $\K_t$, and hence in every principal isotropy groups (the center is invariant under conjugation) thus $M^{\S^1} = M$.

In summary  we have proved that $\G=\SU(3)\cdot \SU(3)$ (indeed a quotient group by $\Delta (\Bbb Z_3)$), with $\G_t=\SU(3)$ diagonally imbedded in $\G$. We claim that this combined with the above analysis of the isotropy groups modulo conjugation will force the polar data $(\G_t, \G_q, \G_r) \subset \G$ (noting that face isotropy groups are intersections of vertex isotropy groups) to be  $(\G_t, \G_q, \G_r)=(\Delta (\SU(3)), \U(2)\cdot \SU(3), \S(\U(2)\U(2)))$, where $\U(2)\subset \SU(3)$ is the upper $2\times 2$ block subgroup in $\SU(3)$, and $\S(\U(2)\U(2)) \subset \SU(3)\cdot \SU(3)$ is the product of the lower $2\times 2$ block subgroups. In other words, by the recognition theorem for polar actions \cite{GZ} there is at most one such polar action. - On the other hand the unique action by the maximal subgroup $\SU(3)\cdot\SU(3) \subset \F_4$, the isometry group of the Cayley plane $\Bbb O\Bbb P^2$ is indeed polar of type $\CC_3$ \cite{PTh}.

To prove the above claim, by conjugation we may assume that  $\G_t=\Delta (\SU(3))$ and $\G_q=\U(2)\cdot \SU(3)$ as claimed. Moreover, up to conjugation by an element of the face isotropy group $\G_{\ell_r}=\G_t\cap \G_q$, we may further assume that $\K'\lhd \G_{\ell_t}\subset \G_q$ is the lower $2\times 2$ block subgroup in the second factor $\SU(3)\lhd \G$.
Note that $\K'$ is a normal subgroup of $\G_r$, indeed the second factor of $\SU(2)\cdot \SU(2)\lhd \G_r \subset \SU(3)\cdot\SU(3)$.  Since $\G_{\ell_q}=\Delta(\SU(3))\cap \G_r$, it follows that $\SU(2)\cdot \SU(2) \lhd \G_r$ is the product of the lower $2\times 2$ block subgroups. Since  $\G_r=\langle \SU(2)\cdot\SU(2), \H\rangle$ where $\H=\Delta (\T^2)$ is the principal isotropy group, the desired assertion follows.

\end{proof}

 Next we deal with the case of multiplicity $(1, 1, 1)$, where there are two scenarios: One is naturally viewed as part of the infinite family $(1,1,k)$, whereas the other should be viewed as the flip case with $d=1$.

 We point out that unlike all other cases an $\S^3$ chamber system cover arises in the first case, corresponding to a polar action of $\SO(3)\SO(3)$ on $ \Bbb {HP}^2$.

\begin{prop}
For the multiplicity $(1, 1, 1)$ case, the chamber system $\mathscr{C}(M;\G)$ is covered by a building, with the isotropy representation of either $\SO(7)/\SO(4)\SO(3)$, or of  $\Sp(3)/\U(3)$ as a linear model.
\end{prop}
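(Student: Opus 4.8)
The two scenarios flagged here are the degenerate endpoints, at $d = k = 1$, of the standard family of Proposition~\ref{thmC3a} and of the flip of Proposition~\ref{flip2}; the plan is to treat them in parallel but separately, distinguishing them by the reducible $\bar\G_r$ slice representation on $\Sph^1 * \Sph^1 = \Sph^3$ (the product action of $\SO(2)\SO(2)$, with the two scenarios differing in how this extends over $\G$, exactly as the flip propositions of Section~\ref{flip} are separated from the series propositions). In both, I would first pin down all the local data from the Slice Kernel Lemma~\ref{effectiveC3}, the vertex--isotropy Lemma~\ref{vertex} read with the conventions $\G_1(1) = \SO(1) = \{1\}$, $\G_1(0) = \{1\}$, $\G_1(-1) = \Z_2$, and Tables~\ref{t-rep} and~\ref{q-rep}: one gets $\bar\G_t = \SO(3)$ and $\K_t = \{1\}$, so $\G_t = \SO(3)$; $\H = \Z_2^2$; $\G_{\ell_r} = \G_{\ell_q} = \O(2)$; and $\G_q$, $\G_r$ determined up to finite data --- in the standard scenario $\G_q = \S(\O(2)\O(3))$ with $\G_r$ of identity component $\SO(2)\SO(2)$, in the flip scenario $\G_q$ and $\G_r$ carry the extra circle factors familiar from Proposition~\ref{flip2}. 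Because $\K_t$ is trivial, the $\A_3$--reduction shortcut of Proposition~\ref{thmC3a} is unavailable, so one is thrown back onto the reduction $M^{\K}$ by $\K := \K_{r,Q}$, whose identity component is the circle $\S^1 \lhd \G_r$; as in Lemma~\ref{flip red}, $\K \cap \K_q = \{1\}$, $\K \hookrightarrow \bar\G_q$ is injective, and $M^\K$ is a simply connected positively curved cohomogeneity one $\N_0(\K)$--manifold with multiplicity pair $(1,1)$.

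The point at which the earlier arguments break down is exactly this: $(1,1)$ is one of the three pairs excluded from Lemma~\ref{GWZ}, so $M^\K$ need not be a rank one symmetric space, and Tables~A and~E of~\cite{GWZ} list positively curved simply connected cohomogeneity one manifolds with $(l_-,l_+) = (1,1)$ whose associated geometry is not $\CC_2$. To eliminate those I would use that the local data already force $\N_0(\K)$, $\N_0(\K) \cap \G_q$ and $\N_0(\K) \cap \G_r$ up to finite cover, so that --- with the Rank Lemma~\ref{rank} and a primitivity argument copied verbatim from Lemma~\ref{flip red} to exclude the reducible model on $\Sph^1 * \Sph^1$ --- only $\Sph^5$ with the $\CC_2$ linear action of $\SO(2)\SO(3)$ survives, together with its quotients $\Sph^5/\Z_m$ and $\CP^2 = \Sph^5/\S^1$. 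Where one needs a sharper statement, one passes from $M^\K$ to the slightly larger, cohomogeneity two reduction by a normal subgroup of an edge--isotropy kernel; its orbit space is a reducible product, so it is identified outright by the classification of reducible cohomogeneity two polar actions in sections~6 and~7 of~\cite{FGT} --- the new input announced in the introduction to Section~\ref{minimal}. In either form Property~(P) of Lemma~\ref{Criterion2} is automatic from the shape of the data, so once the reduction (after unwinding a quotient, when necessary) is recognized as the $\CC_2$ building $\mathscr{C}(\Sph^5, \SO(2)\SO(3))$, the cases where $M$ is already the appropriate sphere are finished.

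What remains is to handle the quotients, via bundle constructions as in Section~\ref{flip}. In the standard scenario $M$ is either $\Sph^{11}$ --- in which case $\mathscr{C}(M;\G)$ is already the $\CC_3$ building with the isotropy representation of $\SO(7)/\SO(4)\SO(3)$ as linear model --- or the quotient $\HP^2 = \Sph^{11}/\S^3$ with the induced $\SO(3)\SO(3)$--action (finite quotients being ruled out since $M$ is simply connected). In the latter case I would run the principal bundle construction of~\cite{GZ} with $\L = \S^3$: since $\G_t$, $\G_{\ell_r}$, $\G_{\ell_q}$ are essentially $\SO(3)$, $\O(2)$, $\O(2)$ and so admit only the trivial homomorphism to $\S^3$, the construction is forced on $\hat\G_q$, $\hat\G_r$ and yields the principal $\S^3$--bundle $\Sph^{11} \to \HP^2$; its chamber system $\mathscr{C}(\Sph^{11}, \S^3 \cdot \G) = \mathscr{C}(\Sph^{11}, \SO(4)\SO(3))$ is a $\CC_3$ building, and by Remark~\ref{lift crit} --- with the reduction $P^{\hat\K}$ recognized as the $\CC_2$ building $\mathscr{C}(\Sph^5, \SO(2)\SO(3))$, or via Remark~\ref{rank3crit} if one prefers a rank three reduction --- it covers $\mathscr{C}(M;\G)$. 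In the flip scenario the identical discussion with $\L = \S^1$, verbatim the argument of Proposition~\ref{flip2}, takes care of $M = \CP^5$ and $M = \Sph^{11}/\Z_m$, with the isotropy representation of $\Sp(3)/\U(3)$ as linear model.

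The main obstacle is the first half of the second paragraph. The multiplicity pair $(1,1)$ removes the clean classification input of Lemma~\ref{GWZ}, so the cohomogeneity one reduction $M^{\K}$ --- or, better, the reducible cohomogeneity two reduction containing it --- must be identified by hand, ruling out the genuinely exotic positively curved cohomogeneity one manifolds with $(l_-,l_+) = (1,1)$; this is precisely where the independent classification of reducible cohomogeneity two polar actions from~\cite{FGT} is indispensable. The appearance of an $\S^3$--bundle cover, in place of the $\S^1$--bundles used in every other case, is a secondary novelty: it is forced by the absence of any circle quotient of $\G_t = \SO(3)$, but it causes no further difficulty once the local data are in hand.
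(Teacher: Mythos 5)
Your approach is structurally different from the paper's, and the gap is at the crucial step you yourself flag: identifying the cohomogeneity one reduction $M^{\K}$. The paper does \emph{not} run the Criterion~\ref{Criterion2} machinery in the $(1,1,1)$ case. Instead it classifies $\G$ directly: after establishing $(\G_t)_0=\SO(3)$ and $\mathrm{rk}\,\G\le 3$, it proves the structural constraint (7.3.1) --- for any cyclic $z\subset\H$ with $[z]\ne 1$ in $\bar\H$, the semisimple part of $\N_0(z)$ has rank at most one --- by observing that the reduction $(M^z,\N_0(z))$ is a \emph{reducible} cohomogeneity two polar action and invoking the Dual Generation Lemma~7.2 of \cite{FGT}. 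This is what rules out rank-3 simple groups and $\SO(5)$, $\G_2$ in rank 2, and pins down the product decompositions $\L_1\cdot\L_2$ in (c); the terminal identifications are then via the recognition theorem (for $\CP^5$, $\HP^2$, and the $\SO(3)\SO(4)$ data), not via a $\CC_2$ building reduction. The covering in the odd-dimensional cases is obtained not by constructing a bundle $P\to M$ but by noting that when a rank-one factor $\L_1$ acts freely, $\mathscr{C}(M,\G)$ already covers $\mathscr{C}(M/\L_1,\L_2)$, which is recognized by (b).

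The concrete problem with your route is that $M^{\K}$ has multiplicity pair $(1,1)$, where Lemma~\ref{GWZ} is silent, and to identify it you need to know $\N_0(\K)$ --- which in turn requires knowing $\G$, the very thing under determination. Your proposed fix, passing to a larger cohomogeneity two reduction ``by a normal subgroup of an edge-isotropy kernel,'' founders here because the edge kernels are essentially trivial once $\K_t$ is finite (and you cannot simply assert $\K_t=\{1\}$; the paper only establishes $\dim\K_t=0$ at the outset, handling the case $\K_t\ne\{1\}$ separately later by an $\A_3$ reduction). The paper's insight is precisely that the useful reducible cohomogeneity two reduction is $M^z$ for a \emph{finite} cyclic $z\subset\H$, not $M^{\K}$ or $M^{\K'}$, and that what one extracts from it is not a building statement but the algebraic constraint 7.3.1 on normalizers, which then drives a Lie-theoretic case analysis. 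Your instincts about the $\S^3$-bundle novelty, the role of \cite{FGT} sections 6--7, and primitivity are correct, but without something playing the role of 7.3.1 you cannot eliminate the exotic $(1,1)$ cohomogeneity one spaces or control the possible $\G$.
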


\begin{proof}
Recall that, $\bar \G_t=\SO(3)$, and $\bar \H=\Bbb Z_2^2$. We first
claim that the identity component $(\G_t)_0=\SO(3)$. To see this,
recall that the kernel $\K_t\subset \K_{\ell_r}$, and
$\bar {\G}_q$ is either $\SO(2)\SO(3)$ or $\S(\O(2)\O(3))$.  The claim follows since, if
$\dim \K_t \ge 1$ or $(\G_t)_0=\S^3$, then  $\K_t\cap
\K_q$ is nontrivial, a contradiction to Lemma
 \ref{effectiveC3}. From this we also conclude that $(\G_q)_0$ is not $\S^1\times \S^3$, since otherwise again  $\K_t\cap \K_q$ is non-trivial. Hence it is isomorphic to either $\SO(2)\SO(3)$ (the ``standard" case)
or to the $2$ fold covering $\U(2)$ of $\SO(2)\SO(3)$ (the ``flip" case). By
 the Rank Lemma  \ref{rank} it follows that $\text{rank}  \G \le 3$.

We start with the following observation:

$\bullet$ Let $z$ be cyclic subgroup of the principal isotropy
group $\H$ with non-trivial image $[z]\subset \bar \H$. Then the action by $\N_0(z)$ on the reduction $M^{z}$ is a
\emph{ reducible} polar action of cohomogeneity $2$. To see this note that  the type $t$ orbit in the reduction is no longer a vertex. Indeed the normalizer of $[z] \subset \bar \H\subset \SO(3)$ is
$\O(2)$.

In addition, note that the identity component of every face
isotropy group is $\S^1$. By the Dual Generation Lemma 7.2 in \cite{FGT} we conclude that

\smallskip

$\bullet$ 7.3.1. {\it The semisimple part of $\N_0(z)$ has rank at
most one.}

\smallskip

To proceed we will prove that

(a).  $\G$ is not a simple group of rank $3$.

This is a direct consequence of  7.3.1  combined with the following
algebraic fact: If $\G$ is a rank $3$ simple group, i.e., one of
$\SO(6)=\SU(4), \SO(7)$ or $ \Sp(3)$ (up to center), then, the
normalizer of any order $2$ subgroup $\Bbb Z_2\subset \SO(3)=
(\G_{t})_0$ contains a semisimple subgroup of rank at least $2$. The
algebraic fact is easily established by noticing that the inclusion
map $\SO(3)\to \G$ either can be lifted to a homomorphism into one
of the four matrix groups, or $\SO(3)$ sits in the quotient image of
a diagonally imbedded $\SU(2)$ in one of the matrix groups.

 Next we are going to prove that

(b). If $\G$ is a rank $2$ group, then either $(M, \G)=(\Bbb {CP}^5,
\SU(3))$ or $(\Bbb{HP}^2, \SO(3)\SO(3))$ up to equivariant
diffeomorphism.

Exactly as in Case (a), we can exclude $\G$ being  $\SO(5)$
since a subgroup $\Bbb Z_2\subset \bar \H\subset (\G_q)_0$
will have a normalizer containing $\SO(4)$. We now exclude $\G$ being the exceptional group $\G_2$.
Otherwise, $(\G_q)_0$ must be $\U(2)$, and contained in either an $\SO(4)\subset \G_2$
or an $\SU(3)\subset \G_2$ by Borel-Siebenthal \cite{BS}. The center $\Bbb Z_2\subset \U(2)$ is in $\K_q$.  For the same
reason as above, $\U(2)$ is not in $\SO(4)\subset \G_2$. Finally, if $\U(2)\subset \SU(3)\subset \G_2$, the $q$ orbit $(\G q)^{\Z_2}$ in the reduction $M^{\Z_2}$ contains $(\G_2/\SU(3))^{\Z_2}=(\Sph^6)^{\Z_2} = \Sph^2$. Again by the Dual Generation Lemma 7.2 of \cite{FGT} this is impossible, since the identity component of the isotropy group of the face opposite of $q$ is a circle, which cannot act transitively on the orbit $(\G q)^{\Z_2}$. Therefore, up to local isomorphism, $\G$ is $\SO(3)\SO(3)$ or $\SU(3)$ respectively. One checks that the corresponding isotropy group data are given by $\G_t=\Delta
(\SO(3))\subset \SO(3)\SO(3)$, and $\G_q=\O(2)\SO(3)\subset
\SO(3)\SO(3)$, respectively by $\G_t=\SO(3)\subset \SU(3)$ (inclusion
induced by the field homomorphism), and $\G_q= \U(2)\subset \SU(3)$
as a block subgroup. The recognization theorem then yields (b).

(c). Now suppose $\G=\L_1 \cdot \L_2$, where  $\L_i$ is a rank $i$ Lie group.

If $\L_1$ acts freely on $M$, then $\L_1=\S^1$, $\SO(3)$, or $\S^3$, and $\L_2$
acts on $M/\L_1$ in a polar fashion of type $\CC_3$. Hence, $M/\L_1$
is even dimensional and thus $\Bbb {CP}^5$ or $\Bbb{HP}^2$ by (b). In either case, we know that the universal cover $\tilde {\mathscr{C}}$ of the
chamber system $\mathscr{C}(M/\L_1,\L_2)$ is a building. Since $\mathscr{C}(M,\G)$ is a connected chamber system covering $\mathscr{C}(M/\L_1,\L_2)$ it follows that
$\tilde {\mathscr{C}}$ is the universal cover of $\mathscr{C}(M,\G)$.

 Now consider the remaining case where

 $\bullet$ $\L_1$ does not act freely on $M$, and we let $\Bbb
Z_m\subset \L_1$ be a cyclic group such that $M^{\Bbb Z_m}\neq
\emptyset$.

 \smallskip
Note that $\G$ can not be $\SO(3)\cdot \T^2$, since, then $\G_t$ and $\SO(3)\lhd
\G_q$ would be 
the same simple group factor, which is absurd. In particular, the semi-simple part of $\G$ has rank at least two. Thus from now on we may assume that $\L_2$ is a rank two semi-simple 
group. Moreover, by the argument in Case (b) it is immediate that in fact $\L_2$ is either $\SO(4)$ or $\SU(3)$.

Notice that:

$\bullet$ If $\K_t$ is not trivial, then $(M^{\K_t}, \N_0(\K_t ))$
is a polar manifold with the same section, which is of  type $\A_3$.
By the Connectivity Lemma \ref{connect} it follows that $M^{\K_t }$ is simply
connected. Hence,  from the classification of $\A_3$ geometries,
$M^{\K_t}$ is diffeomorphic to $\Sph^8$, and the chamber system of
$(M^{\K_t}, \N_0(\K_t ))$ is a building. By \ref{rank3crit}
$\mathscr{C}(M,\G)$ is a building.

Therefore, we may assume in the following that $\K_t=\{1\}$, hence
$\G_t=\SO(3)$. It follows that, $\G_q$ is either $\S(\O(2)\O(3))$ or
$\U(2)$.

We split the rest of the proof according to $\L_1$ abelian or not.
In either case note that the normalizer $\N_0(\Bbb Z_m)$ is
$\S^1\cdot \L_2$. From this we get immediately that $\Bbb
Z_m\nsubset \H$, by appealing to  7.3.1.

(ci)   $\G=\S^1\cdot \L_2$.

It suffices to prove that    
the
$\S^1$ action is free, since then the situation
reduces to the
previous rank $2$ case.

Note that $\Bbb Z_m$ is normal in $\G$. From this and the above it
follows that $\Bbb Z_m$ is neither in $\G_t$ nor in $\G_{\ell_t}$.
To see this, if $\Bbb Z_m \subset \G_{\ell_t}$ then  $ (\bar
{\G}_q)_{\ell_t} \subset \bar \G_q$  would contain a non-trivial
normal subgroup of $\bar \G_q$ contradicting 
Table \ref{q-rep}. The
proof in the other case is similar but simpler. Hence,  $M^{\Bbb
Z_m}$ is either the orbit $\G r$ or $\G q$.

Assuming $M^{\Bbb Z_m} = \G/\G_r$, it is immediate that $\L_2 =
\SU(3)$ from the list of positively curved homogeneous spaces. On
the other hand, notice that $\G_r$ is not connected, indeed
$(\G_r)_0=\T^2$ and $\G_r\supset \G_{\ell_q}\supset \O(2)$, it
follows that $\G/\G_r$ is not simply connected.  However, $\G/\G_r$
is a totally geodesic submanifold in $M$ which has dimension $11$. A
contradiction to Wilking's Connectivity Lemma \ref{connect}.
 
 Assuming $M^{\Bbb Z_m} = \G/\G_q$,  corresponding to $\L_2=\SU(3)$
or $\SO(4)$, the universal cover of $\G/\G_q$ is a sphere of dimension either $5$ or $3$. The latter case is ruled out as follows: If $\G_q = \U(2)$ then $\K_q = \Z_2$ is in the center of $\U(2)$ hence also in the center of $\G$. This is impossible, since $\K_q \subset \H$ and $\G$ acts effectively on $M$ by assumption. If  $\G_q=\S(\O(2)\O(3))$ there are no non-trivial homomorphisms to $\S^1$, hence $\G_q\subset \SO(4)$, which is impossible. For
the former case, $\G_q=\U(2)$ and $\G=\U(3)$, with action on  $\G/\G_q$ equivalent to the standard
linear action on a 5-dimensional spherical space form with  $\Bbb
Z_m$ in the kernel. Thus, $\G_q \supset \Bbb Z_m\times
\U(2)$, a contradiction.

\smallskip

(cii)  $\G=\L_1 \cdot \L_2$, where $\L_1$ is a simple rank one group, i.e., either $\S^3$ or $\SO(3)$.

We will show that in this case $\G = \SO(3)\SO(4)$, with local data $\G_q=\S(\O(2)\O(3))\subset \G$, and $\G_t=\Delta (\SO(3))\subset\G$ forcing all data to
coincide with those of the isotropy representation of $\SO(7)/\SO(3)\SO(4)$, and hence $M$ with the action of $\G$ is determined via recognition.

We first prove that $\L_2=\SO(4)$. If not, we start with an observation that, $\L_1=\SO(3)$,
 and moreover, $\G_t$ is a diagonally imbedded
subgroup in $\L_1\cdot \L_2$. Indeed, otherwise, an order $2$
element $z\in \H\subset \G_t$ will have a normalizer $\N_0(z)$ which
contains a rank $2$ semisimple subgroup, contradicting 7.3.1.  For
the same reason, as above, we see that $\G_q \neq \U(2)$ and hence,
$\G_q=\S(\O(2)\O(3))$. Similarly by 7.3.1, $\SO(3)\lhd \G_q$ must
be diagonally embedded in $\L_1\cdot\L_2$.  This is impossible since
then $\N(\SO(3))/\SO(3)$ is finite, but $(\G_q)_0 \subset
\N(\SO(3))$.

Finally, given that $\L_2 = \SO(4)$ it follows as above that $\G_q \ne \U(2)$, hence $\G_q=\S(\O(2)\O(3))$.
Since $\G_t = \SO(3)$ and $\G_{\ell_r} = \O(2)$ sits diagonally in $\G_q$ it follows that $\G_t$ sits diagonally
in $\L_1\cdot \L_2$, in particular $\L_1 = \SO(3)$. Using the same arguments as above we see that $\SO(3)\lhd \G_q$ is in $\L_2$. All together,
all isotropy data are determined.
\end{proof}

\section{Exceptional $\G_q$ slice representation}\label{exceptional}

This section will deal with the remaining cases, all of which are exceptional with multiplicities $(1,1,5)$, $(2,2,2)$, $(4,4,5)$, and
$(1,1,6)$. All but the latter will occur, and the case of $(1,1,5)$ will include an exceptional action on the Cayley plane.

\smallskip

\begin{prop}\label{(1,1,5)} In the case of the multiplicities $(1, 1, 5)$ where the (effective) slice
representation at $T_q^\perp$ is the tensor representation of
 $\SO(2)\G_2$ on $\Bbb R^2\otimes \Bbb R^7$,  either $M$ is equivariantly diffeomorphic to the Cayley plane
$\Bbb O\Bbb P^2$ with an isometric polar action by $\SO(3)\cdot \G_2$ or $\mathscr{C}(M,\G)$ is
a building, with the tensor product representation of
$\SO(3)\Spin(7)$ on $\Bbb R^3\otimes \Bbb R^8$ as a linear model.
\end{prop}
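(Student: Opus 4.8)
The plan is to follow the now-familiar template for the non-exceptional $\CC_3$ cases, adapted to the slice data forced by the multiplicity triple $(1,1,5)$, and to isolate the Cayley plane as the one case where that template fails. First I would extract the local data. From Table~\ref{q-rep} the effective $q$-slice is $\SO(2)\G_2$ acting by $\rho_2\hat\otimes\phi_7$ on $\R^2\otimes\R^7$, with singular isotropy groups $\triangle\SO(2)\SU(2)$ and $\Z_2\cdot\SU(3)$ and principal isotropy $\Z_2\cdot\SU(2)$; from Table~\ref{t-rep} with $d=1$ the effective $t$-slice is $\SO(3)$ of type $\A_2$ with $\bar\H=\Z_2^2$. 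Using Lemma~\ref{effectiveC3} (the Slice Kernel Lemma) together with $\K_t\subset\K_{\ell_r}$ and $\K_t\cap\K_q=\{1\}$, I would pin down $\K_t$, hence $\G_t=\SO(3)$ (arguing as in Lemma~\ref{vertex} and in the $(1,1,1)$ case that no positive-dimensional kernel or $\S^3$ survives), and then read off $\G_q$ (essentially $\SO(2)\G_2$ extended by the relevant $\Z_2$'s, i.e.\ $\G_q\cong\SO(2)\cdot\G_2$ or the appropriate index-two extension), $\G_r$, the face isotropy groups $\G_{\ell_t},\G_{\ell_q},\G_{\ell_r}$ as intersections, and the distinguished normal subgroup $\K\lhd\G_r$ (here $\SU(3)\lhd\G_r$, the kernel-identity-component of $\G_r$ acting on the $\Sph^5=\Sph^\perp_{r,Q}$), along with the subgroup $\K'\subset\K$ that I will actually reduce by.

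Next I would identify the reduction $M^{\K'}$ (equivalently $M^{\SU(3)}$ or a block subgroup thereof) with its cohomogeneity-one $\N_0(\K')$ action and compute its multiplicity pair; it should come out as one of the pairs covered by the positively curved cohomogeneity-one classification, so Lemma~\ref{GWZ} (or the $d=4$/$d=2$ corollaries in the spirit of Corollary~\ref{1reduction}) identifies its universal cover as a rank-one symmetric space with a linear action of type $\CC_2$ — concretely $\Sph^{2k-1}$, a lens space, or a projective space. On the sphere/lens-space alternatives I would invoke the $\CC_3$ Building Criterion (Lemma~\ref{Criterion2}), checking Property~(P) — which is routine here because $\K'$ is normal in $\G_r$ and the relevant normalizers visibly fail to lie in $\G_{rq}$ — to conclude that $\mathscr{C}(M,\G)$, or an $\S^1$-cover built via the principal-bundle construction of \cite{GZ} (Remarks~\ref{lift crit}, \ref{rank3crit}), is a $\CC_3$ building; simple connectivity of the reduction, when needed, follows from Wilking's Connectivity Lemma~\ref{connect} applied to the codimension of $M^{\K'}$ in $M$ (computed from the isotropy representation via Lemma~\ref{WilkC3}, which forces every irreducible $\K'$-summand to be a standard defining representation). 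Since the resulting building is of type $\CC_3$ and simply connected, Theorem~4.10 of \cite{FGT} recognizes $(M,\G)$ as linear, with $\SO(3)\Spin(7)$ on $\R^3\otimes\R^8$ as the model.

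The genuinely hard case — and the main obstacle — is the alternative in which the reduction $M^{\K'}$ turns out to be (the double cover $\Sph^{13}$, or) $\HP^2$ or $\CP^{?}$ realizing the \emph{reducible} cohomogeneity-one action associated with $\SO(2)\G_2$, where the Reduction Lemma argument breaks down because the $t$-orbit is no longer a vertex in the reduction. As in Propositions~\ref{lemC3d=2,3b} and~\ref{flip2}, I would then run the group-theoretic analysis directly: the Rank Lemma~\ref{rank} fixes $\rank\G\in\{4,5\}$ (actually smaller here — I would recompute), Lemma~\ref{WilkC3} plus Table~B of \cite{GWZ} plus Borel--de Siebenthal \cite{BS} rule out $\G$ simple and rule out $\G$ having $\F_4$, $\SO(9)$, $\SO(8)$, $\Sp(4)$, $\SU(5)$, etc.\ as normal subgroups (each would make the semisimple part of $\N_0(\K')$ too large, contradicting the structure of the reducible reduction, or would give $\N_0(\G_t)/\G_t$ nontrivial on the even-dimensional positively curved fixed set $M^{\G_t}$), so $\G=\L_1\cdot\L_2$ with $\G_t=\SO(3)$ diagonally embedded; tracking which factor contains $\SU(3)\lhd\G_q$ and using primitivity (Lemma~\ref{prim}) forces $\L_1=\SO(3)$, and excluding $\L_2=\G_2$'s larger relatives leaves $\L_2=\G_2$, i.e.\ $\G=\SO(3)\cdot\G_2$. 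Finally I would reconstruct the polar data $(\G_t,\G_q,\G_r)=(\triangle\SO(3),\ \SO(2)\cdot\G_2,\ \S(\O(2)\O(2))\cdot SU(2)\text{-type block})$ up to conjugation — using that face isotropy groups are intersections of vertex isotropy groups and that $\G_r=\langle\,\text{its normal block subgroups},\H\,\rangle$ — so that by the recognition theorem of \cite{GZ} there is at most one such action, and match it with the known polar $\CC_3$ action of the maximal subgroup $\SO(3)\cdot\G_2\subset\F_4$ on $\OP^2$ (cf.\ \cite{PTh}, \cite{GK}), which exists and is polar. That pins the exceptional case and completes the proof.
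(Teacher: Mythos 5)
Your proposal breaks down at the very first step, the extraction of the local data, and the error propagates. You assert $\G_t=\SO(3)$ by ``arguing as in Lemma~\ref{vertex} and in the $(1,1,1)$ case that no positive-dimensional kernel or $\S^3$ survives.'' That argument does not transfer to $(1,1,5)$: in the $(1,1,1)$ and $(1,1,k)$ Grassmann cases the $q$-slice principal isotropy $\bar\H$ is essentially finite modulo the kernel structure that was used there, so any positive-dimensional $\K_t$ immediately forces $\K_t\cap\K_q\ne\{1\}$, contradicting Lemma~\ref{effectiveC3}. Here, however, Table~\ref{q-rep} for $\SO(2)\G_2$ gives $\bar\H=\Z_2\cdot\SU(2)$, which has positive-dimensional identity component. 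Combining the $q$-side ($\H_0$ surjects onto $\SU(2)$ modulo a finite $\K_q$) with the $t$-side (where $\bar\H=\Z_2^2$ is finite, hence $\H_0\subset\K_t$) and Lemma~\ref{effectiveC3} ($\K_t\cap\K_q=\{1\}$) yields $(\K_t)_0=\H_0\cong\S^3$. Therefore $\G_t$ has Lie algebra $\fso(4)$, and after ruling out $\Spin(4)=\S^3\times\S^3$ one gets $\G_t=\SO(4)$, not $\SO(3)$. Consequently $\G_q=\O(2)\cdot\G_2$, $\G_r=\O(2)\cdot\SU(3)$ and $\G_{\ell_t}=\SU(3)\cdot\Z_2^2$; your claimed normal $\SU(3)\lhd\G_q$ does not exist, and your guess for $\G_r$ is likewise off. (A small internal inconsistency in your write-up: you take $\Sph^\perp_{r,Q}=\Sph^5$, but then the kernel of $\G_r=\O(2)\cdot\SU(3)$ acting on $\Sph^5\subset\C^3$ would be $\SO(2)$, not $\SU(3)$; in fact $\Sph^\perp_{r,Q}=\Sph^d=\Sph^1$ here, cf.~Remark~\ref{edge ker}, so the conclusion $\K=\SU(3)$ is right but not by the reasoning offered.)

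These errors change the architecture of the proof, not just its details. Your plan to reduce by a $\K'\subset\SU(3)$, land on a cohomogeneity-one $\CC_2$-type manifold, invoke Lemma~\ref{GWZ}, and detect the Cayley plane via whether the reduction is $\HP^2$, is not how the dichotomy actually works. Since $\rank\G_q=3$ (not $4$ or $5$), the Rank Lemma gives $\rank\G\in\{3,4\}$ and the split is on rank. When $\rank\G=3$ the argument is purely algebraic: Borel--de~Siebenthal shows $\SO(2)\G_2$ is not a subgroup of any rank-$3$ simple group, so $\G=\L\cdot\G_2$ with $\L$ of rank one, and since the projection $\G_t=\SO(4)\to\L$ is nontrivial hence surjective, $\L=\SO(3)$; this is the Cayley plane, then pinned by a uniqueness argument for the polar data. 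When $\rank\G=4$ the reduction is taken by $\H_0=\S^3$ (not by anything built from $\SU(3)$), and it is a \emph{cohomogeneity-two} polar manifold of type $\CC_3$ with multiplicity triple $(1,1,1)$; the conclusion comes from the previously established $(1,1,1)$ case via Remark~\ref{rank3crit}, after a connectivity argument (Lemma~\ref{connect}, with a codimension estimate from Lemma~\ref{WilkC3} split on whether $\G_2\lhd\G_q$ is normal in $\G$) shows $M^{\H_0}=\Sph^{11}$. So while the broad philosophy is the right one, the group theory you need to set it in motion is not.
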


\begin{proof}
By the Transversality Lemma \ref{transv} we conclude that $\G_t$ is connected since $\G/\G_t$ is simply connected. The kernel $\K_t$ is a normal subgroup in $\G_t$, as well as of the principal isotropy group $\H$ with quotients $\G_t/\K_t=\SO(3)$, and $\H/\K_t=\Bbb Z_2\oplus \Bbb Z_2$ respectively (cf. Table \ref{t-rep}). By the Slice Lemma \ref{effectiveC3} $\K_t$ acts effectively on the $q$-slice. Combining this with Table \ref{q-rep} where $(\bar \G_q)_0=\SO(2)\G_2$ it follows that, the identity components $(\K_t)_0=\H_0=\S^3$. Thus, $\G_t=\SO(4)$, or $\Spin(4)=\S^3\times \S^3$. The latter, however, is impossible, since then $\K_t=\S^3\times \Bbb Z_2\lhd \S^3\times \Q_8=\H$ where $\Q_8$ is the quaternion group of order $8$. On the other hand, by Table \ref{q-rep}
the slice representation at $q$ is the natural tensor representation of $\O(2)\G_2$ on $\Bbb R^2\otimes \Bbb R^7$, where the center $\Bbb Z_2\subset \Q_8$ is in the kernel $\K_q$ and so in $\K_t\cap \K_q$. A contradiction.  Therefore, $\G_t=\SO(4)$, and consequently, $\G_q=\O(2)\cdot \G_2$,  $\G_r=\O(2) \cdot \SU(3) $ and $\G_{\ell _t}= \SU(3)\cdot\Bbb Z_2^2$.

\vskip 2mm

By Lemma 6.3 we have $3\leq \text{rk}(\G)\le 4$.

Case (i). Assume $\text{rk}(\G)=3$:

By Lemma 6.3 again $\text{dim} M$ is even. By [BS] (table on page 219) $\SO(2)\G_2$ is not a subgroup in any rank $3$ simple group. Therefore, $\G=\L\cdot \G_2$, where $\L$ is a
rank one group.  By Table \ref{q-rep} the face isotropy group $(\G_{\ell _r})_0=\SU(2)\cdot \Delta(\SO(2)) $ is diagonally embedded in $\SO(2)\G_2\lhd \G_q\subset \L\cdot \G_2$. It follows that, the composition homomorphism $\G_t \subset \G\to \L$ is nontrivial, hence surjective onto $\L$, because $\G_t=\SO(4)$. Hence $\L=\SO(3)$ and $\G=\SO(3)\G_2$  since the only proper 
nontrivial normal Lie subgroup of $\SO(4)$ is $\S^3$ with quotient $\SO(3)$. By the above, we already know that, $\G_t=\SO(4)$ is a diagonal subgroup given by an epimorphism $\SO(4)\to \SO(3)$ and a monomorphism $\SO(4)\to \G_2$. It is clear that, up to conjugation, $\G_q=\O(2)\cdot \G_2\subset \G$ where $\O(2)\subset \SO(3)$ is the standard upper $2\times 2$ block matrices subgroup.

As in the proof of Proposition \ref{lemC3d=2,3b} we now claim that there is at most one polar action with the data as above. Since we are dealing with a non classical Lie group, however, we proceed as follows: 

 Given another $\CC_3$ type polar action of $\G=\SO(3)\G_2$ with isomorphic local data along a chamber $C'$ with vertices $t', q', r'$.
Without loss of generality we may assume that $\G_q=\G_{q'}\subset \SO(3)\G_2$, and moreover, $\G_t=\G_{t'}$ since any two $\SO(4)$ subgroups in $\G_2$ are conjugate. Moreover, we can further assume that $\G_{\ell_t}=\G_{\ell_{t'}}$ since the singular isotropy groups pair for the slice representation at $q$ is unique up to conjugation. In particular, the principal isotropy groups $\H=\H'$. We prove now $(\G_{\ell_q})_0=(\G_{\ell_{q'}})_0=\SO(2)\SU(2)$.  This clearly implies the assertion since $\G_r$ is generated by $(\G_{\ell_q})_0$ and $\G_{\ell_t}$. Recall that $\G_t=\Delta(\SO(4))\subset \G$, its composition with the projection to $\G_2\lhd \G$ is a monomorphism, so is the composition of $(\G_{\ell_q})_0\subset (\G_r)_0=\SO(2)\cdot \SU(3)$ to $\G_2$,  hence, $(\G_{\ell_q})_0$ is a diagonal subgroup of $\G_r$, whose projection to the factor $\SU(3)$ is injective.
Hence it suffices to show that the projection images of $(\G_{\ell_q})_0$ and $(\G_{\ell_{q'}})_0$ in $\SU(3)=(\G_{\ell_t})_0=(\G_{\ell_{t'}})_0$ coincide.
On the other hand, note that the projection image of $(\G_{\ell_q})_0$ in $\SU(3)$ is the normalizer $\N_0(\H_0)$ in $\SU(3)=(\G_{\ell_t})_0$, where $\H_0$ is the identity component of the principal isotropy group. The above assertion follows.

As for existence we again note that $\SO(3)\G_2$ is a maximal subgroup of the isometry group  $\F_4$ of the Cayley plane $\Bbb O\Bbb P^2$. The corresponding unique isometric action is indeed polar as proved in \cite{GK} and of type  $\CC_3$.

\vskip 2mm

Case (ii). Assume $\text{rk}(\G)=4$:

By Lemma \ref{rank}, $\text{dim} M$ is odd.  Consider the reduction $M^{\H_0}$   with the action
of $\N_0(\H_0)$, the identity component of the normalizer. Note that, this is also a $\CC_3$ type polar action,  but the multiplicity triple is $(1, 1, 1)$.  By
appealing to Lemma \ref{WilkC3}, the codimension of $M^{\H_0}$ is divisible by $4$. Thus from the $(1, 1, 1)$ case it follows that,
the universal cover $\tilde M^{\H_0}$  is $\Sph^{11}$, and the identity component
$\N_0(\H_0)$ is either $\U(3)$ or $\SO(3)\SO(4)$, modulo kernel.

We are going to prove that  $M^{\H_0}$ is simply connected. It
suffices to show that $M^{\H_0}\subset M$ is $2$-connected. This
follows trivially by the Connectivity Lemma of Wilking \ref{connect}, if the
codimension of $M^{\H_0}$ is at most $12$.

If $\G_2\lhd \G_q$ is a normal subgroup of $\G$,
then $\G=\L\cdot \G_2$ where $\L$ is a rank $2$ group. Then $\N_0(\H_0)/\H_0$ is isomorphic to $\L\cdot \SO(3)$.
Hence $\L=\SO(4)$. It is easy to count the codimension to see that it is strictly less than $12$.

If $\G_2$ is not a normal subgroup, by Lemma \ref{WilkC3} the
isotropy representation of $\SU(3)\subset \G_2\subset \G$ is
spherical transitive. Hence, $\G$ contains a normal simple Lie subgroup $\L$,
such that $\G_2\subset \Spin (7)\subset \L$ is \emph{spherical}. We claim that
$\L=\Spin(7)$. If not, $\L$ contains $\Spin(8)$
such that $\Spin(7)\subset \L$ is a block subgroup in $\Spin(8)$,
and hence $\N_0(\H_0)$ contains $\Spin (5)$, which contradicts the above. This proves that $\G=\L_1\cdot \Spin(7)$, where $\L_1$ is
a rank $1$ group. From this we get that the
isotropy subrepresentation of $\G/\H_0$ contains exactly three
copies of the standard defining representation of $\SU(2)$, hence the
desired estimate for the codimension.

In summary we conclude that $M^{\H_0}=\Sph^{11}$,
$\N_0(\H_0)=\SO(3)\SO(4)$ and hence, from the multiplicity $(1, 1,
1)$ case, the chamber system for the action of $\N_0(\H_0)$ is a
building of  type $\CC_3$. By remark \ref{rank3crit} we conclude
that $\mathscr{C}(M,\G)$ is a building.
\end{proof}

\medskip

\begin{prop}
There is no polar action of type $\CC_3$ type with multiplicities $(1, 1,
6)$, where the (effective) slice representation at $T_q^\perp$ is the tensor
product representation of $\SO(2)\Spin(7)$ on $\Bbb R^2\otimes \Bbb R^8$.
\end{prop}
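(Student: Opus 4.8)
The plan is to derive a contradiction from the existence of such an action by analyzing the possible ambient groups $\G$, exactly as in the proof of Proposition \ref{(1,1,5)}. First I would extract the local data at $t$ and $q$: by the Transversality Lemma \ref{transv}, $\G_t$ is connected, and since the effective $t$-slice representation is the $\A_2$-representation of $\SO(3)$ (multiplicity $d=1$) with principal isotropy $\Bbb Z_2^2$, the kernel $\K_t$ is a normal subgroup of $\G_t$ and of $\H$ with $\G_t/\K_t = \SO(3)$ and $\H/\K_t = \Bbb Z_2^2$. Combining the Slice Kernel Lemma \ref{effectiveC3} (so that $\K_t$ acts effectively on the $q$-slice) with the fact that $(\bar\G_q)_0 = \SO(2)\Spin(7)$ from Table \ref{q-rep}, I would identify the identity component of $\K_t$: since $\Spin(7)$ is the effective factor acting on $\Bbb R^8$, one expects $(\K_t)_0 = \Spin(7)$ or a closely related group, giving $\G_t = \SO(3)\cdot\Spin(7)$ up to finite cover (ruling out a product-type enlargement $\Spin(3)\times\Spin(7)$ exactly as in \ref{(1,1,5)}, where the center $\Bbb Z_2$ of the quaternion factor in $\H$ would lie in $\K_t\cap\K_q$, contradicting \ref{effectiveC3}). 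Then $\G_q = \O(2)\cdot\Spin(7)$, $\G_r = \O(2)\cdot\G_2$ by Table \ref{q-rep}, and $\G_{\ell_t} = \G_2\cdot\Bbb Z_2^2$.

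Next I would invoke the Rank Lemma \ref{rank} to bound $\rank\G$: since $\rank\G_q = \rank(\SO(2)\Spin(7)) = 4$, we get $\rank\G = 4$ or $5$ according to the parity of $\dim M$. The core of the argument is then to show that no such $\G$ exists. As in \ref{(1,1,5)}, one splits into cases. If $\Spin(7)\lhd\G_q$ is normal in $\G$, then $\G = \L\cdot\Spin(7)$ with $\L$ of rank $\le 1$, and one checks that $\G_t = \SO(3)\cdot\Spin(7)$ cannot be accommodated because the projection of $\G_t$ to $\L$ would have to be surjective with $\L$ of rank at most one while $\G_t$ has rank three — indeed the only normal Lie subgroup of $\G_t$ giving quotient a rank-one group would force $\L = \SO(3)$, hence $\G = \SO(3)\cdot\Spin(7)$, but this group is not big enough, or conversely the slice/normalizer structure (e.g. the normalizer of $\H_0$, a rank-two reduction) would be inconsistent. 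If $\Spin(7)$ is not normal, then by the Spherical Transitivity Lemma \ref{WilkC3} applied to the subgroups of the edge isotropy groups, $\G$ must contain a larger simple normal subgroup $\L\supset\Spin(7)$ (with $\Spin(7)$ spherical in $\L$), forcing $\L\supseteq\Spin(8)$ or $\Spin(9)$ or $\F_4$; in each such case one computes the semisimple part of $\N_0(\H_0)$ or $\N_0(\K_t)$ and finds it too large to be compatible with the multiplicity $(1,1,1)$ reduction $M^{\H_0}$ classified in the previous section (whose normalizer is only $\U(3)$ or $\SO(3)\SO(4)$).

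The main obstacle I expect is the case analysis for the ambient group $\G$: unlike the classical-group cases, $\Spin(7)$ embeds in several ways into larger groups ($\Spin(8)$, $\Spin(9)$, $\F_4$), and one must carefully rule out each possibility, and also handle the non-product case where $\G$ is simple of rank $4$ or $5$. The key leverage is that the reduction $M^{\H_0}$ (or $M^{\K_t}$), obtained by collapsing the $\Spin(7)$-kernel direction, is a cohomogeneity-two polar manifold of multiplicity $(1,1,1)$ whose classification from the previous section is very restrictive: its normalizer quotient is forced to be $\U(3)$ or $\SO(3)\SO(4)$, neither of which can be the image of the normalizer of $\Spin(7)$ inside any of the candidate ambient groups. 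Thus the contradiction will come from a dimension/codimension count: the $\G$-isotropy representation of $\G/\H_0$ would have to restrict to the $\H_0$-representation dictated by the $(1,1,1)$ geometry, but the embeddings available make this impossible. A subsidiary point is to confirm, via \ref{connect}, that the relevant reductions are simply connected so that the classification applies — this is routine given the codimension estimates from \ref{WilkC3}. Once all ambient possibilities are eliminated, the conclusion is that no polar action of type $\CC_3$ with multiplicities $(1,1,6)$ exists.
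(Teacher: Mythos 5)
Your proposal contains a substantive error in the local data that undermines the entire strategy, and it also misses the paper's much cleaner route.

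\emph{The error in the local data.} You claim $(\K_t)_0 = \Spin(7)$ and hence $\G_t = \SO(3)\cdot\Spin(7)$, reasoning that $\K_t$ acts effectively on the $q$-slice and $\Spin(7)$ is the effective factor acting on $\mathbb{R}^8$. But the binding constraint on $\K_t$ is not merely effectivity on $T_q^\perp$. Since $\K_t$ fixes the whole chamber, it lies in the principal isotropy group $\H$, and by Lemma \ref{effectiveC3} it meets $\K_q$ trivially; hence $\K_t$ injects into $\bar\H = \H/\K_q$, which by Table \ref{q-rep} is $\SU(3)$. Moreover $\H/\K_t \cong \mathbb{Z}_2^2$ is finite, so $\dim\K_t = \dim\H$, forcing $\K_t$ to be locally isomorphic to $\SU(3)$. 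The $21$-dimensional group $\Spin(7)$ cannot possibly embed in $\SU(3)$. The correct identification, which the paper states explicitly, is $\G_t = \SU(3)\SO(3)$ with $\K_t = \SU(3)$. All the downstream group theory in your proposal (attempted ambient groups $\L\cdot\Spin(7)$, embeddings of $\Spin(7)$ into $\Spin(8)$, $\Spin(9)$, $\F_4$, etc.) is built on this incorrect starting point and would have to be discarded.

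\emph{The strategy.} Independently of the data error, your plan of enumerating and excluding candidate ambient groups $\G$ is not what the paper does, and it is not clear it would terminate cleanly. The paper instead shows that \emph{if} such an action existed, its chamber system would necessarily be a building, and then appeals to the classification of $\CC_3$ buildings, which has no entry with multiplicities $(1,1,6)$. Concretely, with $\K_t = \SU(3)$, the reduction $(M^{\K_t}, \N_0(\K_t))$ has $\N_0(\K_t)\cap\G_q$ a torus $\T^2$, making the $q$-vertex a right angle, so the reduction is of type $\A_3$ and hence (by the $\A_3$ classification) is $\Sph^8$ or $\RP^8$. Lemma \ref{WilkC3} shows the normal isotropy representation of $\K_t$ is the standard $\SU(3)$ representation, so $M^{\K_t} = M^{\T^2}$ for a maximal torus $\T^2 \subset \K_t$ and is therefore orientable, hence simply connected, hence $\Sph^8$; then Remark \ref{rank3crit} promotes this to $\mathscr{C}(M,\G)$ being a building, and the nonexistence of a $\CC_3$ building with these multiplicities gives the contradiction. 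This reduction-plus-classification argument is both shorter and more robust than a direct case analysis of $\G$, and I would encourage you to rework your solution along those lines after correcting the identification of $\K_t$.
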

\begin{proof}
We will prove that, if there is such a slice representation at $q$,
the chamber system  $\mathscr{C}(M,\G)$ is a building. The desired
claim follows from the classification of $\CC_3$ buildings, i.e.,
indeed there is no such a building.

To proceed, note that from Table \ref{q-rep} $\bar
\G_q=\SO(2)\Spin(7)$, and the principal isotropy group $\bar
\H=\SU(3)$. It follows that, up to local isomorphism
$\G_t=\SU(3)\SO(3)$ with $\K_t=\SU(3)$. Notice that, the reduction
$(M^{\K_t}, \N_0(\K_t))$ is of cohomogeneity $2$ with the same
section. It is clear that it is of type $\A_3$ since the $q$ vertex
is a vertex with angle $\pi/2$, because $\N_0(\K_t)\cap \G_q$ is
$\T^2$. By the classification of $\A_3$ geometries it follows that,
$M^{\K_t}$ is either $\Sph^8$ or $\Bbb{RP}^8$. We claim that
$M^{\K_t}=\Sph^8$, and hence the chamber system for $(M^{\K_t},
\N_0(\K_t))$ is a building. By appealing to \ref{rank3crit} it
follows that $\mathscr{C}(M,\G)$ is a building. To see the claim, it
suffices to prove that $M^{\K_t}$ is orientable and hence simply
connected, thanks to the positive curvature. By \ref{WilkC3} the
isotropy representation of $\K_t=\SU(3)$ is the defining complex
representation. From this it is immediate that, $M^{\K_t}=M^{\T^2}$,
and hence oriented, where $\T^2\subset \K_t$ is a maximal torus.
\end{proof}

\smallskip

\begin{prop}
When the multiplicity triple is $(2, 2, 2)$, there are two scenarios. In either case
$\mathscr{C}(M,\G)$ is a building, with linear model the adjoint
polar representation of either $\SO(7)$ or of $\Sp(3)$ on $\Sph ^{20}$. 
\end{prop}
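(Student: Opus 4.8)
The plan is to follow the now-standard template used throughout this section: determine the vertex isotropy groups from the slice data at $t$ and $q$, identify the relevant normal subgroup $\K \lhd \G_r$, recognize the reduction $M^{\K}$ (or a fixed point set of a smaller subgroup) via the cohomogeneity one classification of \cite{GWZ}, and then invoke the $\CC_3$ Building Criterion \ref{Criterion2} together with Remark \ref{rank3crit}. First, since the slice representation at $q$ is the adjoint representation of $\SO(5)$ on $\Sph^9$ (multiplicities $(2,2)$) or of $\SU(5)$/$\U(5)$ on $\Sph^{19}$ (multiplicities $(4,5)$)—these being the two exceptional rows of Table \ref{q-rep} compatible with a $t$-slice of type $\A_2$ with $k = 2$—one reads off $(\bar\G_q)_0$ and, combined with the $t$-slice data of Table \ref{t-rep} and the Slice Kernel Lemma \ref{effectiveC3}, pins down $\K_t$ and hence $\G_t$, $\G_q$, $\G_r$ and the face isotropy groups. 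The ``two scenarios'' presumably correspond to the two rows for multiplicity $(4,5)$ (the $\SU(5)$ versus $\U(5)$ case, i.e. whether $\K_t$ is finite or contains an $\S^1$) and to the genuinely different $(2,2,2)$ case; the two linear models $\SO(7)$ and $\Sp(3)$ adjoint on $\Sph^{20}$ (which is $\fso(7)$ and $\fsp(3)$ both of dimension $21$) are the targets to be matched.

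Next I would handle the reduction. As in Remark \ref{edge ker}, the identity component $\K' = \K_t$ (or a block subgroup thereof, e.g. $\SU(3) \lhd \K_t$ in the $\U(5)$ scenario) is contained in $\K = \K_{r,Q}$, and the fixed point set $M^{\K'}$ with the action of $\N_0(\K')$ is a positively curved polar manifold of cohomogeneity two, of type $\A_3$ or $\CC_3$ with smaller multiplicities—here a multiplicity triple such as $(2,2,1)$ or $(1,1,1)$ or $(2,2,2)$ reduced by one slot. One then appeals to the already-established cases of Theorem \ref{thmC3} (the Flip Proposition \ref{flip2}, the $(1,1,1)$ and $(2,2,3)$ results, or the $\A_3$ classification from \cite{FGT}) to conclude that this reduction, after passing to the universal cover, is a linear action whose chamber system is a rank two or rank three building. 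Simple connectivity of $M^{\K'}$ is forced by Wilking's Connectivity Lemma \ref{connect} once one computes $\operatorname{codim}(M^{\K'} \subset M)$, which is done via the Spherical Transitive Subrepresentations Lemma \ref{WilkC3}: every irreducible $\K'$-isotropy summand is a standard defining representation, and counting the copies along $T^\perp_{\ell_t}$ and along $\G/\G_{\ell_t}$ gives the codimension as a small multiple of $\dim$ of that defining representation—small enough (at most $2k$ where $k$ is the connectivity needed) to run the induction. The group $\G$ itself is identified by the usual Borel--de Siebenthal \cite{BS} plus Table B of \cite{GWZ} argument applied to the constraint from $\N_0(\K')$ and the Rank Lemma \ref{rank}.

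Finally, with $M^{\K'}$ (hence the larger $M^{\K}$) recognized as a building-carrying reduction and Property (P) of Lemma \ref{Criterion2} being automatic from the explicit local data (the normalizer of $\K_{r,Q}$ inside any $\L$ with $\K_{r,Q} \subset \L \subset \G_q$ but $\L \not\subset \G_{rq}$ visibly escapes $\G_{rq}$, since the relevant quotient is a sphere acted on transitively), Remark \ref{rank3crit} yields that $\mathscr{C}(M,\G)$ itself is a $\CC_3$ building; no $\S^1$ cover is needed here because in both scenarios $\G$ turns out to be semisimple of the right rank with $\G_q$ of full or corank-one rank and the reduction already simply connected. The main obstacle I anticipate is the codimension/connectivity bookkeeping for the reduction in the $(4,4,5)$ scenario—ensuring that $M^{\K'}$ is simply connected requires correctly identifying the normal factor of $\G$ (ruling out that $\Sp(2) \lhd \G_q$ or $\SU(3) \lhd \G_q$ sits inside a larger exceptional or classical factor, which would make $\N_0(\K')$ too big and contradict the reduction's type), exactly the kind of delicate argument carried out in Propositions \ref{thmC3d=2a} and \ref{t}. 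The $(2,2,2)$ case is likely cleaner, reducing directly to a $(1,1,1)$ or $(2,2,1)$ situation already settled above, but one must check both adjoint models ($\SO(7)$ and $\Sp(3)$) actually arise and are distinguished by the isotropy data via the recognition theorem of \cite{GZ}.
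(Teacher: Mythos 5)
Your plan misses the essential structure of this case and also conflates it with a different proposition.

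First, a factual error: for the multiplicity triple $(2,2,2)$ the slice representation at $q$ has multiplicity pair $(2,2)$, and the only row of Table~\ref{q-rep} with $(l_-,l_+)=(2,2)$ is the adjoint representation of $\SO(5)$ on $\Sph^9$. The $\SU(5)$ and $\U(5)$ rows you invoke have multiplicities $(4,5)$ and belong to the \emph{next} proposition (the $(4,4,5)$ case). So the ``two scenarios'' have nothing to do with $\SU(5)$ versus $\U(5)$; they are the two linear models $\SO(7)$ and $\Sp(3)$ compatible with the (uniquely determined, up to local isomorphism) local data $\G_t=\U(3)$, $\G_q=\SO(5)\S^1$, $\G_r=\SO(3)\U(2)$, $\H=\T^3$, $\G_{\ell_t}=\SO(3)\SO(2)\S^1$.

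Second, and more importantly, the reduction you propose does not work here. You want to reduce by a normal factor $\K'$ of $\K_t$ or $\K_{r,Q}$, expect $M^{\K'}$ with $\N_0(\K')$ to be of type $\A_3$ or $\CC_3$, and then appeal to \cite{GWZ} or to Wilking's connectivity. The paper instead takes $\K'=\SO(3)\lhd\G_{\ell_t}$, passes to the circle $\SO(2)=\K'\cap\H$, and reduces to $M^{\SO(2)}$ with the action of the \emph{full} normalizer $\N(\SO(2))$. The crucial observation is that this reduction is a \emph{reducible} cohomogeneity-two polar action of type $\A_1\times\CC_2$ (the face $\ell_t$ becomes exceptional with normal sphere $\Sph^0$, and the fundamental chamber becomes $rqq'$ with $q'$ a fixed point); this is not captured by the $\A_3/\CC_3$ template you describe. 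Recognition of $M^{\SO(2)}=\Sph^{10}$ then comes from Theorem~6.2 of \cite{FGT} (the reducible cohomogeneity-two classification), not from the cohomogeneity-one classification of \cite{GWZ}; and simple connectivity is obtained from orientability plus Synge, not from Wilking's Connectivity Lemma. Finally, the remark following the proof explicitly points out that taking the identity component $\N_0(\SO(2))$ would \emph{not} give a building -- one must use $\N(\SO(2))$ -- so the standard template of using $\N_0$ that you default to would fail at exactly this point. Property~(P) and Remark~\ref{rank3crit} are then applied for $\SO(2)$, and the proof closes; no group-theoretic determination of $\G$ via Borel--de Siebenthal or Table~B of \cite{GWZ} is needed in this case.
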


\begin{proof}
By Lemma
 \ref{transv} we know that all vertex isotropy groups are
connected. Notice that, by Table \ref{q-rep},  the slice
representation at $q$ is the adjoint representation of $\SO(5)$  on
$\Bbb R^{10}$.  Together with Proposition \ref{effectiveC3}, up to
local isomorphism, the local isotropy group data are determined as
follows:  $\G_t=\U(3)$, $\G_q=\SO(5)\S^1$ and $\G_r=\SO(3)\U(2)$.
Moreover, $\H=\T^3$, $\G_{\ell _t}=\SO(3)\SO(2) \S^1$, and
$\K'=\SO(3)\lhd \G_{\ell _t}$.

Let $\SO(2)=\K'\cap \H \subset  \K'$. Consider the reduction
$(M^{\SO(2)}, \N(\SO(2)))$. It is once again a polar manifold with
the same section. For such a reduction, notice that: the face $\ell
_q$ has multiplicity $2$, the face $\ell _t$ is exceptional with
normal sphere $\Sph^0$, and $\G_q\cap \N(\SO(2))/\G_{\ell_t}\cap
\N(\SO(2))=\Sph^2$. Therefore, the action of $\N(\SO(2))$ is
reducible with fundamental chamber $rqq'$, where $q'$ is a
reflection image of $q$, and $\overline{rq}=\ell_t$ is of
exceptional orbit type. In particular, the multiplicities at $q'$ are
$(2, 2)$, hence the slice representation at $q'$ for the
$\N(\SO(2))$-action is again the adjoint representation of $\SO(5)$
on $\Bbb R^{10}$. This clearly implies that $q'$ is a fixed point.
On the other hand,  notice that $M^{\SO(2)}$ is orientable and hence
simply connected. Therefore, by Theorem 6.2 of \cite{FGT} we know that
$M^{\SO(2)}=\Sph^{10}$. Since Property (P) holds for $\SO(2)$ it follows from Remark \ref{rank3crit} that $\mathscr{C}(M,\G)$ is a building.
\end{proof}

\begin{rem}
We remark that in the above proof, the chamber system of
$(M^{\SO(2)}, \N(\SO(2)))$ is  a building of type $\A_1\times \CC_2$
but the one for $(M^{\SO(2)}, \N_0(\SO(2)))$ is not.
\end{rem}

\begin{prop} In the case of the multiplicities $(4, 4, 5)$, the chamber system
$\mathscr{C}(M;\G)$ is covered by a building, with the
isotropy representation of $\SO(14)/\U(7)$ as a linear model.
\end{prop}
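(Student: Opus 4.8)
The strategy follows the template of the earlier families. I would first determine the local data along a chamber $C$, then single out a normal subgroup $\K'\lhd\G_r$ (a factor of a vertex isotropy group, related to the slice kernels) whose reduction $M^{\K'}$, under the identity component $\N_0(\K')$ of its normalizer, is a positively curved cohomogeneity one manifold (of multiplicity pair $(4,7)$); quote the relevant classification to identify it as a rank one symmetric space with a linear polar action of type $\CC_2$; prove that $M^{\K'}$ is simply connected by a codimension estimate feeding Wilking's Connectivity Lemma \lref{connect}; and conclude with the $\CC_3$ Building Criterion \lref{Criterion2} together with \rref{rank3crit}. As in \pref{thmC3d=2a}, the Rank Lemma \lref{rank} divides the argument into a $\dim M$ odd case (where $\mathscr{C}(M;\G)$ is itself a building) and a $\dim M$ even case (where one passes to the $\S^1$-bundle $P\to M$ of \sref{prelim} and shows its chamber system is a building covering $\mathscr{C}(M;\G)$); these correspond to the $\SU(5)$- and $\U(5)$-versions of the $q$-slice $\Lambda^2\mu_5$ in Table~\ref{q-rep}.

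\emph{Local data.} By the Transversality Lemma \lref{transv} all vertex isotropy groups are connected. With $d=4$, Table~\ref{t-rep} gives $\bar\G_t=\Sp(3)/\Delta(\Z_2)$ with its $\A_2$-type representation $\psi_{14}$, so that $\H/\K_t=\Sp(1)^3/\Delta(\Z_2)$; and Table~\ref{q-rep} gives the $(4,5)$-slice at $q$ as the tensor-type representation $\Lambda^2\mu_5$ of $\SU(5)$ (or $\U(5)$), with principal part $\SU(2)^2$ (possibly times $\S^1$), singular part $\Sp(2)$ along $\ell_r$ and $\SU(2)\SU(3)$ along $\ell_t$. Writing $\H$ simultaneously as an extension of $\Sp(1)^3/\Delta(\Z_2)$ by $\K_t$ and of $\SU(2)^2$ (times a possible $\S^1$) by $\K_q$, and invoking $\K_t\cap\K_q=\{1\}$ from the Slice Kernel Lemma \lref{effectiveC3}, forces $\K_t$ to be one of the $\Sp(1)$-factors and $\K_q=\SU(2)^2$. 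This pins down, modulo finite central subgroups, $\G_t=\Sp(3)$, $\G_q=\SU(5)$ (resp. $\U(5)$) acting by $\Lambda^2$, $\G_r$ with factors $\Sp(2)$ and $\SU(3)$ transitive on the normal spheres $\Sph^4$ (along $\ell_q$) and $\Sph^5$ (along $\ell_t$), and the face isotropy groups as the corresponding intersections; the case $\dim M$ even additionally produces a central circle in $\G$ and puts $\T^2$-factors into both $\G_q$ and $\G_r$, exactly as in \pref{thmC3d=2a}.

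\emph{Reduction and conclusion.} With the local data in hand one identifies the normal subgroup $\K'\lhd\G_r$ whose reduction $M^{\K'}$ is cohomogeneity one with multiplicity pair $(4,7)$ — the generic value $(d,2d-1)$ appearing in \cref{1reduction} — and checks by the primitivity argument of \pref{t} that this action is not the reducible one, so by \cref{1reduction} the universal cover of $M^{\K'}$ is $\Sph^{23}$ with a linear action of type $\CC_2$. To see $M^{\K'}$ is simply connected I would run the argument of \pref{thmC3d=2a} and \pref{t} almost verbatim: by the Spherical Isotropy Lemma \lref{WilkC3} every irreducible $\K'$-subrepresentation of $\G/\K'$ is a copy of the defining representation; Table B of \cite{GWZ}, combined with the normalizer data supplied by \cref{1reduction} and the block position of $\K'$ inside the semisimple factor of $\G_q$, pins down a unique simple normal factor of $\G$ containing $\K'$ in a chain of block subgroups; and the $\K'$-slice along $\ell_t$ contains exactly three copies of the defining representation (one in $T_{\ell_t}^\perp$, two in $\G/\G_{\ell_t}$), so the codimension of $M^{\K'}$ in $M$ is $18$ and Wilking's Connectivity Lemma \lref{connect} gives $\pi_i(M^{\K'})\cong\pi_i(M)$ for $i\le2$ (by induction where necessary). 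Since Property (P) is automatic, \rref{rank3crit} yields that $\mathscr{C}(M;\G)$ is a building when $\dim M$ is odd; when $\dim M$ is even the $\S^1$-bundle construction of \pref{thmC3d=2a} (using the projections of the $\T^2$-factors of $\G_q$ and $\G_r$ to $\S^1$, the simple factors admitting only the trivial homomorphism) produces a building covering $\mathscr{C}(M;\G)$. In either case the recognition theorem \cite{GZ} identifies the action with the isotropy representation of $\SO(14)/\U(7)$.

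\emph{Main obstacle.} As in the other exceptional cases, the real work is the local-data bookkeeping: the representation $\Lambda^2\mu_5$ of $\SU(5)/\U(5)$ is exceptional and does not belong to a Grassmann series, so matching the two extensions describing $\H$, extracting $\K_t$, $\K_q$ and the precise block embeddings (including finite central subgroups and the odd/even ambiguity of extra circles in $\K_t$, $\G_q$, $\G_r$), and reconciling the $t$- and $q$-side descriptions along $\ell_r$, all require care. The only other delicate point, again exactly as in \pref{thmC3d=2a} and \pref{t}, is the codimension count feeding Wilking's Connectivity Lemma: one must locate the ambient simple factor of $\G$ precisely via \lref{WilkC3} and Table B of \cite{GWZ} and rule out the competing candidates using the normalizer constraint furnished by the cohomogeneity one classification (\cref{1reduction}).
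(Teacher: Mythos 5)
Your proposal follows the general template of the paper, but several of the concrete claims it rests on are incorrect, and these errors propagate to the key reduction step. The determination of the kernels is off. From Table~\ref{t-rep} the effective $t$-slice principal isotropy is $\Sp(1)^3/\Delta(\Z_2)$ (rank $3$), and from Table~\ref{q-rep} the effective $q$-slice principal isotropy for $\Lambda^2\mu_5$ is $\SU(2)^2$ (rank $2$), possibly times $\S^1$. Your assertion that $\K_t$ is an $\Sp(1)$-factor would force $\G_t$ to have rank $4$, contradicting your own $\G_t=\Sp(3)$; and $\K_q=\SU(2)^2$ is incompatible with $\K_t\cap\K_q=\{1\}$ inside an $\H$ of rank $3$ or $4$. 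The paper's actual data is $\K_t$ finite (or $\S^1$ in the non-semisimple subcase), $\K_q=\Sp(1)\subset\G_t$, and $\G_q=\SU(5)\Sp(1)$ (times $\S^1$), $\G_r=\Sp(2)\SU(3)$ (times $\S^1$), $\G_{\ell_t}=\SU(3)\Sp(1)^2$ (times $\S^1$); you omit the $\Sp(1)$-factor of $\G_q$ entirely.

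The more serious gap is the reduction step. Corollary~\ref{1reduction} and the lemma before it are proved for the Grassmann series, where the $q$-slice is $\nu_2\hat\otimes\nu_{k+2}$ of $\Sp(2)\Sp(k+2)$ and $\K'=\Sp(k+1)$ is a block in $\Sp(k+2)\lhd\G_q$; that block structure drives the $(d,2d-1)$ multiplicity and the irreducibility argument. Here the $q$-slice is the exceptional $\Lambda^2\mu_5$ of $\SU(5)$, so you cannot invoke that corollary: the relevant $\K'$ is the $\SU(3)$-factor of $\G_r$ (equivalently of $\G_{\ell_t}$), $\N_0(\K')=\SU(4)\T^i\cdot\K'$, and $M^{\K'}$ is $\Sph^{11}$, not $\Sph^{23}$ of multiplicity pair $(4,7)$. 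You also leave the most substantive step — proving $\G$ contains $\SU(7)$ as a normal subgroup via Lemma~\ref{WilkC3}, Table~B of \cite{GWZ}, and a ruling-out of $\SO(n)$ through the Dual Generation Lemma 7.2 of \cite{FGT} applied to $M^{\K_q}$ — as an unargued "pinning down." Finally, the even-dimensional case does not produce a central circle in $\G$ or extra $\T^2$-factors; there $\G=\SU(7)$ exactly, the polar data are fully determined inside $\SU(7)$, and the action is recognized as the linear one on $\CP^{20}$ rather than handled by an $\S^1$-bundle construction. The division into two cases is governed by whether $\G_t$ is semisimple, not merely by the parity of $\dim M$ as in the Grassmann family.
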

\begin{proof}
By Lemma \ref{transv} we know that all isotropy groups are
connected. Note that $\bar\G_t= \Sp(3)$, and $\bar \G_q=\SU(5)$ or
$\U(5)$. By Lemma \ref{effectiveC3}, it is easy to see that:

$\bullet$ if $\G_t$ is semisimple, then, up to local isomorphism, $\G_t=\Sp(3)$, $\G_r= \Sp(2)\SU(3)$, $ \G_q=\SU(5)\Sp(1)$
and $\G_{\ell _t}=\SU(3)\Sp(1)^2$, where $\Sp(1)=\K_q$ is a subgroup
of $\G_t$.

$\bullet$ if $\G_t$ is not semisimple, then $\K_t=\S^1$, and all
isotropy groups data are the product of $\S^1$ with the
corresponding data above.

We now prove that $\G$ contains $\SU(7)$ as a normal subgroup.  By
Lemma \ref{WilkC3} the isotropy representations of $\G/\Sp(2)$ and
$\G/\SU(3)$ are both spherical, where $\Sp(2), \SU(3)$ are normal
factors of face isotropy groups. Hence, a normal factor $\L$ of $\G$
is either $\SO(n)$ or $\SU(n)$, by  Table B in [GWZ].
Moreover, the subgroup $\K_q\subset \G_t$ is contained in a block
subgroup $\SO(4)\subset \L$ (resp. a block subgroup $\SU(2)\subset
\L$) if $\L=\SO(n)$ (resp. $\L=\SU(n)$). Since $\N_0(\K_q)$ contains
$\G_q$, it follows that $n\ge 14$ (resp. $n\ge 7$) if $\L=\SO(n)$
(resp. $\SU(n)$). To rule out the former case, consider the fixed
point set $M^{\K_q}$ with the polar action of $\N_0(\K_q)$. It is
clearly a reducible cohomogeneity $2$ action with $q$ a vertex of angle
$\pi/4$. By the Dual Generation Lemma 7.2 of \cite{FGT} it follows that $\N_0(\K_q)$ is
either $\G_q$ (the fixed point case) or the product of $\SU(5)\lhd
\G_q$ with the face isotropy group opposite to $q$ in the reduction
$M^{\K_q}/\N_0(\K_q)$.  From this it is immediate that
$\L=\SU(7)$.

Note that if $\G_t$ is semisimple, or $\text{dim}(M)$ is even, then $\text{rank}\G\leq 6$,
by the Rank Lemma, and hence $\G=\SU(7)$. For the remaining case,
i.e., $\text{dim}(M)$ being odd and $\G_t=\S^1\cdot \Sp(3)$, we now
prove that $\G=\U(7)$, up to local isomorphism. Indeed, it is clear
that $\text{rank}\G =7$, and hence $\G=\SU(7)\cdot \L_2$, where
$\L_2$ is a rank $1$ group. It suffices to prove that $\L_2=\S^1$.
Let $\K'=\SU(3)\lhd \G_{\ell_t}$. It is clear that the projection
$p_2: \G\to \L_2$ is trivial, when restricted to either of $\Sp(3)\lhd
\G_t$ and $\K'\subset \G_q$. By primitivity \ref{prim}, $\G=\langle \G_t,
\G_{\ell_t}\rangle =\langle \G_t, \K'\rangle $. Therefore,
$p_2(\G_t)=\L_2$ and hence, $\L_2=\S^1$.

To complete the proof, we split into two cases, i.e, $\text{dim}(M)$
being even or odd. For the former, $\K_t=\S^1$ and $\G=\SU(7)$. It
is clear that $\G_t=\Sp(3)\S^1$ is a subgroup of $\U(6)\subset
\SU(7)$ and $\G_q=\SU(5)\Sp(1)\cdot\S^1$ is the normalizer
$\N(\Sp(1))$ in $\G$, where $\Sp(1)\lhd \G_{\ell _r}\subset \G_t$.
This forces all isotropy groups data to be the same as for the
linear cohomogeneity $2$ polar action on $\Bbb{CP}^{20}$ induced
from the isotropy representation of $\SO(14)/\U(7)$. Hence, in
particular, the chamber system $\mathscr{C}(M;\G)$ is covered by a
building. For the latter, $\G=\SU(7)$ or $\U(7)$ depending on
$\K_t=\{1\}$ or $\S^1$. The fixed point set $M^{\K'}$ is odd
dimensional, since the isotropy representation of $\K'$ is the
defining complex representation. Note that
$\N_0(\K')=\SU(4)\T^i\cdot \K'$, $i=1, 2$, and $M^{\K'}$ is
equivariantly diffeomorphic to $\Sph^{11}$ with a standard linear
cohomogeneity one action of type $\CC_2$. Hence, by  Lemma \ref{Criterion2},
$\mathscr{C}(M;\G)$ is a building.
\end{proof}


\providecommand{\bysame}{\leavevmode\hbox    
to3em{\hrulefill}\thinspace}                                            

\end{document}